\documentclass[10pt,letterpaper,reqno]{amsart}
\usepackage[T1]{fontenc}
\usepackage{amsmath,amssymb,amsthm,mathtools}
\usepackage{xcolor}
\usepackage{hyperref}
\usepackage{doi}
\hypersetup{colorlinks=true,linkcolor=blue,citecolor=blue,urlcolor=blue}
\usepackage{enumitem}
\usepackage{mathrsfs}

\newtheorem{thm}{Theorem}[section]
\newtheorem{lem}[thm]{Lemma}
\newtheorem{prop}[thm]{Proposition}
\newtheorem{cor}[thm]{Corollary}

\newtheorem{ques}[thm]{Question}

\theoremstyle{definition}

\newtheorem{rem}[thm]{Remark}
\newtheorem{defin}[thm]{Definition}
\numberwithin{equation}{section}

\newcommand{\M}{\mathcal M}
\newcommand{\N}{\mathcal N}
\newcommand{\E}{\mathcal E}

\newcommand{\id}{\operatorname{Id}}
\newcommand{\supp}{\operatorname{supp}}
\DeclareMathOperator{\Tr}{Tr}
\DeclareMathOperator{\Ree}{Re}
\DeclareMathOperator{\sgn}{sgn}
\newcommand{\pair}[2]{\left\langle #1,#2\right\rangle}

\begin{document}
	
\title[Sharp tangent inequalities in noncommutative $L_p$-spaces]{Sharp tangent inequalities in noncommutative $L_p$-spaces}
	
	\author[Z.~Xu]{Zongben Xu}
	
	\address{School of Mathematics and Statistics, Xi'an Jiaotong University,
		Xi'an 710049, P.~R.~China}
	\email{zbxu@mail.xjtu.edu.cn}

	\author[T.~Zhang]{Teng Zhang}
	
	\address{School of Mathematics and Statistics, Xi'an Jiaotong University, Xi'an 710049, P. R. China}
	\email{teng.zhang@stu.xjtu.edu.cn}
	
\subjclass[2020]{Primary 46L52; Secondary 46B20, 46B10, 47B10}
	
	\keywords{noncommutative $L_p$-space; sharp tangent inequality;
		Bregman divergence; conditional expectation; martingale inequality;
		quasi-Banach Schatten class}
	
\thanks{Teng Zhang is supported by the China Scholarship Council, the Young Elite
	Scientists Sponsorship Program for PhD Students of the China Association for
	Science and Technology, and the Fundamental Research Funds for the Central
	Universities at Xi'an Jiaotong University (Grant No.~xzy022024045).}

\begin{abstract}
	Let \(\M\) be a von Neumann algebra equipped with a faithful normal
	semifinite weight, and let \(L_p(\M)\) denote the associated Haagerup
	noncommutative \(L_p\)-space. For \(1<p<\infty\), let
	\(q=p/(p-1)\), and 
	for \(z\in L_q(\M)\) and \(y\in L_p(\M)\), denote by
	\(\langle z,y\rangle=\Ree\Tr(z^*y)\) the canonical real duality pairing
	between \(L_q(\M)\) and \(L_p(\M)\).
	 Define the normalized duality map by
	\(J_p(x)=\|x\|_p^{\,2-p}u|x|^{p-1}\) for \(x=u|x|\neq0\), and set
	\(J_p(0)=0\). We establish the sharp tangent
	inequality
	\[
	\|x+y\|_p^2
	\ge
	\|x\|_p^2
	+2\langle J_p(x),y\rangle
	+(p-1)\|y\|_p^2,
	\qquad
	x,y\in L_p(\M),\quad 1<p\le2.
	\]
The reverse inequality holds for \(2\le p<\infty\), and the constant
\(p-1\) is optimal. Our inequality extends Xu's tangent inequality \cite{Xu89} from
classical \(L_p\)-spaces to general noncommutative \(L_p\)-spaces and
provides the sharp tangent formulation of the Ricard--Xu convexity
inequality \cite{RX16}. When specialized to Schatten \(p\)-classes, it
also yields the corresponding tangent formulation of the celebrated
Ball--Carlen--Lieb convexity inequality \cite{BCL94}. Under a suitable
positivity assumption, we further establish a sharp complement to Xu's
tangent inequality for Schatten \(p\)-classes in the range \(0<p<1\).

\end{abstract}

	\maketitle
	
%	\tableofcontents
	
	\section{Introduction}

	For a complex inner-product space
	\((V,\langle\cdot,\cdot\rangle)\), the norm induced by the inner product
	satisfies
	\begin{equation*}
		\|x+y\|^{2}
		=\|x\|^{2}+2\Ree\langle x,y\rangle+\|y\|^{2},
		\qquad x,y\in V.
	\end{equation*}
	By the Jordan--von Neumann theorem \cite{JN35}, a complex normed space
	\((X,\|\cdot\|)\) is an inner-product space, in the sense that its norm is
	induced by an inner product, if and only if its norm satisfies the
	parallelogram identity
	\begin{equation*}
		\|x+y\|^{2}+\|x-y\|^{2}
		=2\bigl(\|x\|^{2}+\|y\|^{2}\bigr),
		\qquad x,y\in X.
	\end{equation*}
	Thus, the parallelogram identity provides a fundamental characterization
	of inner-product spaces among normed spaces.

	The motivation in this paper comes from the classical program of
	replacing the inner product by a duality mapping in Banach spaces.
	This viewpoint originated in the early work of Beurling and Livingston
	on duality mappings~\cite{BL62} and was subsequently developed in connection with
	monotone nonlinear operators by Browder~\cite{Bro65}, Asplund~\cite{Asp67}, Kato~\cite{Kat67}, and others; see also Cioranescu's monograph
	\cite[Chapters~I--II]{Cio90} for a systematic account.

	Let \(X\) be a uniformly smooth real Banach space. For every
	\(x\in X\), there exists a unique functional \(J(x)\in X^*\) such that
	\[
	\langle J(x),x\rangle=\|x\|^2,
	\qquad
	\|J(x)\|_{X^*}=\|x\|;
	\]
	see \cite[p.~27, Corollary~4.5]{Cio90}.
	Indeed, the existence of such a functional follows from the
	Hahn--Banach theorem, while its uniqueness follows from the smoothness
	of \(X\). The resulting single-valued map
	$
	J:X\longrightarrow X^*
	$
	is called the \emph{normalized duality map}, with \(J(0)=0\).
	Here \(\langle\cdot,\cdot\rangle\) denotes the canonical duality
	pairing between \(X^*\) and \(X\). Whenever no confusion can arise, we
	use the same bracket notation for duality pairings and inner products.
	The map \(J\) may be regarded as the nonlinear Banach-space analogue
	of the Riesz map in a Hilbert space. Accordingly, the duality term
	$
	\langle J(x),y\rangle
	$
	plays the role of the inner-product term \(\langle x,y\rangle\) in
	Hilbert-space identities.

	Define
	$
	\Psi:X\to\mathbb R,
	$ with $
	\Psi(x):=1/2\|x\|^2.
	$
	Then \(\Psi\) is Fr\'echet differentiable on \(X\), and
	$
	D\Psi(x)=J(x)\in X^*.
	$
	Consequently, for every \(x,h\in X\),
	$
	D\Psi(x)[h]
	=
	\langle J(x),h\rangle.
	$
	Thus,
	\[
	\Psi(x+h)
	=
	\Psi(x)
	+\langle J(x),h\rangle
	+o(\|h\|)
	\qquad\text{as }h\to0.
	\]
	
	Let $\mathcal{M}$ be a von Neumann algebra equipped with a faithful normal
	semifinite weight $\varphi$. The associated noncommutative $L_p$-spaces are
	denoted by $L_p(\mathcal{M})$. We refer to \cite{PX03} for background on
	noncommutative integration. A detailed reduction framework for
	arbitrary von Neumann algebras is given in
	\cite{HJX10}.
	For \(1<p<\infty\), let \(q=p/(p-1)\) be the conjugate exponent of \(p\).
	The space \(L_p(\mathcal M)\), regarded as a real Banach space, is uniformly
	smooth; see \cite[Theorem~5.1 and Corollary~5.2]{PX03}. Consequently, for
	each nonzero \(x\in L_p(\mathcal M)\), there exists a unique norming
	functional \(J_p(x)\in L_q(\mathcal M)\). A direct verification shows that,
	if \(x=u|x|\) is the polar decomposition of \(x\), then the normalized
	duality map
$
	J_p:L_p(\mathcal M)\longrightarrow L_q(\mathcal M)
$
	is given by
	\[
	J_p(x)
	=
	\begin{cases}
		\|x\|_p^{\,2-p}u|x|^{p-1}, & x\neq0,\\[1mm]
		0, & x=0.
	\end{cases}
	\]
	For \(z\in L_q(\mathcal M)\) and \(y\in L_p(\mathcal M)\), we use the real
	duality pairing
	\[
	\langle z,y\rangle
	:=
	\Ree\Tr(z^*y),
	\]
	where \(\Tr\) denotes the canonical trace pairing between the Haagerup
	spaces \(L_q(\mathcal M)\) and \(L_p(\mathcal M)\). 
	
Our first result is the following sharp tangent inequality in $L_p(\mathcal M)$.
	\begin{thm}
		\label{thm:main1}
		Let \(x,y\in L_p(\mathcal M)\).
		Then for $1<p\le 2$,
		\begin{equation}\label{eq:main}
			\|x+y\|_p^2
			\geq
			\|x\|_p^2
			+2\langle J_p(x),y\rangle
			+(p-1)\|y\|_p^2.
		\end{equation}
		For $2\le p<\infty$, the inequality is reversed.
	Moreover, the coefficient \(p-1\) is sharp.
	\end{thm}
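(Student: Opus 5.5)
The plan is to derive \eqref{eq:main} from the Ricard--Xu two-uniform convexity inequality \cite{RX16}:
\[
\Bigl\|\frac{a+b}{2}\Bigr\|_p^2+(p-1)\Bigl\|\frac{a-b}{2}\Bigr\|_p^2\le \frac{\|a\|_p^2+\|b\|_p^2}{2},\qquad 1<p\le 2 .
\]
For $2\le p<\infty$ it is the reverse. We assume it holds in Haagerup $L_p(\M)$; if needed, this follows from the semifinite case via the Haagerup reduction \cite{HJX10}. The mechanism is the standard "midpoint iteration". The function $\Phi(x)=\|x\|_p^2$ is convex and Fr\'echet differentiable with $D\Phi(x)[h]=2\langle J_p(x),h\rangle$. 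The uniform convexity estimate then upgrades to a tangent estimate by a dyadic limiting argument.

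\textbf{Step 1 (Iteration).} Fix $x,y$ and let $f(t)=\|x+ty\|_p^2$ for $t\in[0,1]$. Apply the inequality with $a=x$, $b=x+y$ to get
\[
f(1)\ge 2f(\tfrac12)-f(0)+\tfrac{p-1}{2}\|y\|_p^2 .
\]
Applied on $[0,s]$ with step $sy$, it reads $f(s)-f(0)\ge 2\bigl(f(s/2)-f(0)\bigr)+\tfrac{p-1}{2}s^2\|y\|_p^2$. Set $g(s)=f(s)-f(0)$ and iterate with $s=2^{-k}$. This gives
\[
g(1)\ge 2^n g(2^{-n})+\tfrac{p-1}{2}\|y\|_p^2\sum_{k=0}^{n-1}2^{k}\,4^{-k}
=2^n g(2^{-n})+(p-1)\|y\|_p^2(1-2^{-n}).
\]

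\textbf{Step 2 (Limit).} As $n\to\infty$, $2^n g(2^{-n})\to f'(0)=2\langle J_p(x),y\rangle$ by differentiability of $\Phi$. This uses uniform smoothness \cite{PX03} and the explicit formula for $J_p$; we only need the G\^ateaux derivative, and at $x=0$ the derivative is $0$. Hence $\|x+y\|_p^2\ge\|x\|_p^2+2\langle J_p(x),y\rangle+(p-1)\|y\|_p^2$. For $p\ge2$ every inequality reverses, giving the reversed statement.

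\textbf{Step 3 (Sharpness).} Take $\M=\ell_\infty^2$, or any algebra containing two orthogonal projections of equal finite trace. Let $x=e_1+e_2$ and $y=\varepsilon(e_1-e_2)$. Then $\langle J_p(x),y\rangle=0$ and $\|y\|_p^2=\varepsilon^2 2^{2/p}$. A Taylor expansion gives
\[
\|x+y\|_p^2=2^{2/p}\bigl(1+(p-1)\varepsilon^2+O(\varepsilon^4)\bigr),
\]
so $\|x+y\|_p^2-\|x\|_p^2=(p-1)\|y\|_p^2+o(\varepsilon^2)$. Therefore no constant larger than $p-1$ (for $p\le2$), nor smaller (for $p\ge2$), can work.

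\textbf{Main obstacle.} The expected main obstacle is making sure the Ricard--Xu inequality is available in the Haagerup setting with a general weight, and justifying the derivative identity $\frac{d}{dt}\|x+ty\|_p^2|_{t=0}=2\Ree\Tr(J_p(x)^*y)$ there. For the derivative, the first approach to try is the duality characterization. Since $J_p(x)$ is the unique norming functional, convexity gives the one-sided derivatives as the max and min of $2\langle z,y\rangle$ over norming functionals $z$. Uniqueness, which follows from smoothness \cite{PX03}, then yields the identity without any functional calculus.
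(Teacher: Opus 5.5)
Your argument is correct, but it differs from the paper's proof of this theorem.

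\textbf{The paper's route.} Section~\ref{sec:proof-main} proves the tangent inequality directly and without using \eqref{eq:RX}:
\begin{itemize}
\item A scalar divided-difference bound (Lemma~\ref{lem:scalar-divided-difference}) and a left--right complex-interpolation estimate (Lemma~\ref{lem:left-right-interpolation}) give the pointwise Hessian bound $\frac{d^2}{dt^2}\|A+tB\|_p^2\ge 2(p-1)\|B\|_p^2$. This holds whenever $A+tB$ is invertible and self-adjoint in a finite tracial algebra (Proposition~\ref{prop:derivative}).
\item A $2\times2$ self-adjoint block with $\pm\varepsilon 1$ on the diagonal reduces arbitrary $x,y$ to this case. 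One then integrates twice and lets $\varepsilon\downarrow0$.
\item The Haagerup reduction of \cite{HJX10} transfers the result to a general $\M$ (Lemma~\ref{lem:haagerup-transfer}).
\item The case $p\ge2$ comes from the $q$-case by Fenchel duality, not from a reversed midpoint inequality.
\end{itemize}

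\textbf{How your route relates.} You have proved the implication \eqref{eq:RX}$\Rightarrow$\eqref{eq:main}, which the paper records separately as Theorem~\ref{thm:abstract-equivalence}. There the paper shows that $g(t)=\Phi(x+ty)-\frac c2t^2\|y\|^2$ is midpoint convex, hence convex by continuity, and then uses the supporting line at $0$. Your dyadic telescoping $g(1)\ge 2^ng(2^{-n})+(p-1)\|y\|_p^2(1-2^{-n})$ uses the midpoint inequality only at dyadic scales together with the one-sided derivative. It therefore bypasses the ``midpoint convex plus continuous implies convex'' step.

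\textbf{Trade-offs.} Your route is shorter and purely Banach-geometric. It is valid in any space whose squared norm is G\^ateaux differentiable. However, it is only as strong as its black box: it presupposes the Ricard--Xu inequality, and its reverse for $p\ge2$, in Haagerup $L_p$. So it cannot give a new proof of \eqref{eq:RX} or of Ball--Carlen--Lieb. The paper's direct proof is independent of \cite{RX16} and yields the stronger infinitesimal Hessian estimate. Combined with the trivial direction of Theorem~\ref{thm:abstract-equivalence}, it recovers \eqref{eq:RX} as a consequence.

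Your justification of the derivative via the max formula and the uniqueness of the norming functional is sound. Your sharpness example is the paper's, up to normalization of the trace.
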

Inequality~\eqref{eq:main} reduces to Xu's tangent inequality for classical
\(L_p\)-spaces \cite{Xu89}. Indeed, when
\(\mathcal M=L_\infty(\Omega,\mu)\) is commutative and
\(x,y\in L_p(\Omega;\mathbb R)\), it reduces to
\begin{equation}\label{eq:xu}
	\|x+y\|_{L_p}^2
	\ge
	\|x\|_{L_p}^2
	+2\langle j_p(x),y\rangle
	+(p-1)\|y\|_{L_p}^2,
	\qquad 1<p\le2,
\end{equation}
where
$
\langle j_p(x),y\rangle
:=
\int_\Omega j_p(x)y\,d\mu
=
\|x\|_{L_p}^{\,2-p}
\int_\Omega |x|^{p-2}xy\,d\mu.
$
For \(2\le p<\infty\), the inequality is reversed. Thus,
Theorem~\ref{thm:main1} gives the noncommutative tangent
formulation corresponding to Xu's classical inequality. We note that Xu and Roach~\cite{XR91} subsequently placed the tangent inequality
\eqref{eq:xu} within the general theory of uniformly convex and uniformly
smooth Banach spaces. The resulting estimates, now commonly
known as the \emph{Xu--Roach inequalities}, have proved useful in
Banach-space optimization and inverse problems; see, for instance,
\cite{BKMSS08,HH14,Spr19,BBHR23}. Xu~\cite{Xu20} also explicitly emphasized their
potential relevance to the non-Euclidean analysis of machine-learning
problems. More broadly, Banach-space geometry, duality
mappings, and semi-inner-products have played an important role in the
development of learning methods in reproducing kernel Banach spaces; see
\cite{ZXZ09}.
	
	Inequality~\eqref{eq:main} may also be viewed as the tangent
	form of the sharp Ricard--Xu convexity inequality \cite[Theorem~2]{RX16}. Indeed, applying
	\eqref{eq:main} to \(y\) and \(-y\), and then adding the resulting
	inequalities yields, for every
	\(x,y\in L_p(\mathcal M)\),
\begin{equation}\label{eq:RX}
		\|x+y\|_p^2+\|x-y\|_p^2
	\ge
	2\|x\|_p^2+2(p-1)\|y\|_p^2,
	\qquad 1<p\le2,
\end{equation}
	with the reverse inequality for \(2\le p<\infty\). In fact,
\eqref{eq:main} and \eqref{eq:RX} are equivalent, as will be
	shown later in Theorem~\ref{thm:abstract-equivalence}. Thus, once
	\eqref{eq:RX} is known, Theorem~\ref{thm:abstract-equivalence} also gives
	\eqref{eq:main};
	Section~\ref{sec:proof-main} will provide a direct proof
	of the tangent inequality that does not rely on \eqref{eq:RX}, and to
	prepare for the subsequent developments concerning Bregman divergences
	and the quasi-Banach range.
	 In particular, taking
	\(\mathcal M=B(\mathcal H)\) equipped with its canonical semifinite trace,
\eqref{eq:RX}
	reduces to the celebrated Ball--Carlen--Lieb inequality for Schatten $p$-classes $S_p$
	\cite[Proposition~3]{BCL94}.
\begin{rem}
Sections~\ref{sec:conditional-expectations} and~\ref{sec:martingale}
develop structured refinements of Theorem~\ref{thm:main1}. More
precisely, the Bregman form of Theorem~\ref{thm:main1} provides the
sharp two-point estimate for the component lying in the range of a
conditional expectation, while the Ricard--Xu conditional-expectation
inequality \cite[Theorem~1]{RX16} controls the complementary residual component. Combined
with the exact Bregman--Pythagorean decomposition, these two estimates
yield the anchored conditional-expectation inequality. Iterating the
same decomposition along a filtration then gives the corresponding
martingale inequalities. Conversely, the choice \(\E=\id\) recovers
Theorem~\ref{thm:main1}, whereas the choice \(a=\E_0x\) recovers the
Ricard--Xu martingale convexity inequality \cite[Corollary~3]{RX16}.
\end{rem}

When \(0<p<1\), the space \(L_p\) is no longer a Banach space, but only
a quasi-Banach space. It is therefore natural to ask whether the
classical geometric inequalities for \(L_p\)-spaces admit meaningful
analogues in this range, possibly under additional structural
assumptions. In his ICM lecture, Xu \cite{Xu10} raised this issue explicitly in
connection with the geometry of \(L_{1/2}\).

\begin{ques}[\cite{Xu10}]\label{ques:xu-parallelogram}
	Does \(L_{1/2}\) admit an analogue of the parallelogram identity or,
	more generally, a family of characteristic inequalities that plays
	a comparable role in its quasi-Banach geometry?
\end{ques}

More recently, Zhang~\cite{Zha26} showed that Hanner's inequality for
\(S_p\) fails in general throughout the range \(0<p<1\), while a
positive-cone version remains valid under the additional order condition
\(A\pm B\ge0\). Notably, the same order condition also appears in the
Ball--Carlen--Lieb formulation of Hanner's inequality; see
\cite[Theorem~2]{BCL94}.
 This indicates that, although the classical inequalities
do not persist globally in the quasi-Banach range, part of their
geometric content may survive along suitable order segments.

Motivated by Question~\ref{ques:xu-parallelogram} and Zhang's
positive-cone result~\cite{Zha26}, we begin by showing that the formal
extension of Xu's tangent inequality \eqref{eq:xu} has no fixed direction in the range
\(0<p<1\). We then show that a sharp one-sided inequality can nevertheless
be recovered by restricting to positive order segments, first in classical
\(L_p\)-spaces and subsequently in Schatten \(p\)-classes.

\begin{prop}\label{prop:xu-tangent-no-direction}
	For every \(0<p<1\), the formal extension of
	\eqref{eq:xu} has no fixed direction. More precisely, there exist
	pairs for which each of the two possible strict inequalities occurs.
\end{prop}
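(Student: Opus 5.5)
Since the statement only asks for two pairs exhibiting opposite strict inequalities, I would work in the simplest setting: \(\ell_p^2\), i.e.\ \(L_p\) of the counting measure on two points, with real vectors. Here
\[
\langle j_p(x),y\rangle=\|x\|_p^{2-p}\sum_i |x_i|^{p-2}x_iy_i,
\]
and this makes sense whenever \(x\) has no zero coordinates, or more generally whenever \(y\) vanishes off \(\supp x\). I set
\[
\Delta(x,y):=\|x+y\|_p^2-\|x\|_p^2-2\langle j_p(x),y\rangle-(p-1)\|y\|_p^2 .
\]
I then exhibit one pair with \(\Delta>0\) and one with \(\Delta<0\).

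\emph{Pair with \(\Delta>0\).} I take \(x\) and \(y\) with disjoint supports, for instance \(x=(1,0)\) and \(y=(0,t)\). Then \(\langle j_p(x),y\rangle=0\), and
\[
\|x+y\|_p^2=(1+|t|^p)^{2/p}.
\]
Since \(2/p>2\), this is at least \(1+\tfrac2p|t|^p\). This exceeds \(1+(p-1)t^2\) for all \(t\ne0\), because \(p-1<0\) makes the last term negative. So \(\Delta>0\).

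A simpler choice is \(x=0\). Then \(\Delta=(2-p)\|y\|_p^2>0\). I prefer the disjoint-support example, since it uses \(x\neq0\).

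\emph{Pair with \(\Delta<0\).} I look at the one-dimensional direction \(y=tx\), with \(x\neq0\). Here
\[
\Delta=\|x\|_p^2\bigl((1+t)^2-1-2t-(p-1)t^2\bigr)=(2-p)t^2\|x\|_p^2>0,
\]
which has the wrong sign. So the negative example must exploit the failure of convexity. The natural choice is \(x=(1,0)\) and \(y=(-1,s)\), so that \(x+y=(0,s)\).

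Then \(\langle j_p(x),y\rangle=-1\), and \(\|y\|_p^2=(1+|s|^p)^{2/p}\). Hence
\[
\Delta=s^2-1+2-(p-1)(1+|s|^p)^{2/p}=s^2+1+(1-p)(1+|s|^p)^{2/p}>0,
\]
which again has the wrong sign. The term \(-(p-1)\|y\|_p^2\) is always nonnegative for \(p<1\). So I need \(\|x+y\|_p^2<\|x\|_p^2+2\langle j_p(x),y\rangle\), i.e.\ the tangent line must lie above the graph of \(\|\cdot\|_p^2\) by more than \((1-p)\|y\|_p^2\).

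\emph{Key observation and main obstacle.} For \(0<p<1\), the function \(u\mapsto\|u\|_p^2\) is concave near points with all coordinates nonzero, in directions transverse to \(x\). So I take \(x=(1,1)\) and \(y=(\varepsilon,-\varepsilon)\). Then \(\langle j_p(x),y\rangle=0\). The map \(g(\varepsilon)=\bigl((1+\varepsilon)^p+(1-\varepsilon)^p\bigr)^{2/p}\) satisfies
\[
g(\varepsilon)=g(0)+\tfrac{2}{p}\,2^{2/p-1}\,p(p-1)\varepsilon^2+O(\varepsilon^4),
\]
which has a negative quadratic coefficient, namely \(2^{2/p}(p-1)\varepsilon^2\). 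Meanwhile \((p-1)\|y\|_p^2=(p-1)2^{2/p}\varepsilon^2\). These leading terms cancel exactly, so the second-order test is inconclusive. This cancellation is the main obstacle.

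To get past it, I would take \(x=(1,a)\) with \(a\neq1\) and \(y\) chosen so that \(\langle j_p(x),y\rangle=0\). I would compare the Hessian quadratic form of \(\|\cdot\|_p^2\) at \(x\) with \(2(p-1)\|y\|_p^2\) by an explicit second-order expansion, and choose \(a\) small to make the Hessian term dominate negatively. Concretely: take \(y=(-a^{p-1}\varepsilon,\varepsilon)\). The Hessian term is of order \(2(p-1)\|x\|_p^{2-p}a^{p-2}\varepsilon^2\). The term \((p-1)\|y\|_p^2\) is of order \((p-1)\varepsilon^2\) for small \(a\). Since \(a^{p-2}\to\infty\), this gives \(\Delta<0\) for small \(\varepsilon\). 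Finally, I would fix specific numbers, for example \(p=1/2\), to confirm the sign, and note that the argument works uniformly in \(p\in(0,1)\).
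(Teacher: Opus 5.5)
Your strategy of exhibiting explicit pairs in $\ell_p^2$ is the same as the paper's, but both of your chosen examples have problems as written.

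\textbf{Positive direction.} The pair $x=(1,0)$, $y=(0,t)$ fails your own admissibility criterion, because $y$ does not vanish off $\supp x$. For $p<1$ the weight $\sgn(x)|x|^{p-1}$ is infinite where $x=0$, so $\langle j_p(x),y\rangle$ is not defined. Remark~\ref{rem:positive-cone-limitations} uses the pair $x=(1,0)$, $y=(0,1)$ to show that the first variation of $\|x+ty\|_p^2$ is infinite there. Assigning that term the value $0$ is a convention the formal extension of \eqref{eq:xu} does not support. The repair is already in your text. For $x$ with nonzero coordinates and $y=tx$ you computed $\Delta=(2-p)t^2\|x\|_p^2>0$, and with $x=y=(1,1)$ this is exactly the paper's first pair. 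The choice $x=0$ also works, but only through the convention $j_p(0)=0$.

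\textbf{Negative direction.} The asymptotic step in your construction $x=(1,a)$, $y=(-a^{p-1}\varepsilon,\varepsilon)$ is wrong. Since $p-1<0$, you have $|y_1|=a^{p-1}\varepsilon\gg\varepsilon$. Hence $\|y\|_p^2=(1+a^{p(p-1)})^{2/p}\varepsilon^2\sim a^{2p-2}\varepsilon^2$, not a quantity of order $\varepsilon^2$. The true $\varepsilon^2$-coefficient of $\Delta$ is
\[
(p-1)\Bigl[(1+a^p)^{2/p-1}\bigl(a^{2p-2}+a^{p-2}\bigr)-\bigl(1+a^{p(p-1)}\bigr)^{2/p}\Bigr].
\]
It is negative for small $a$ only because $a^{p-2}/a^{2p-2}=a^{-p}\to\infty$, not for the reason you gave. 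You would also need to fix $a$ before letting $\varepsilon\to0$, since the Taylor remainder depends on $a$, and keep $\varepsilon<a^{1-p}$ so the first coordinate stays positive.

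None of this is needed. You discarded $x=(1,1)$, $y=(\varepsilon,-\varepsilon)$ because the $\varepsilon^2$-terms cancel, but there is no reason to linearize. At $\varepsilon=1$ one gets $x+y=(2,0)$ and $\Delta=4-p\,2^{2/p}$. This is negative for $0<p<1$, because $p\mapsto p\,2^{2/p}$ is decreasing on $(0,1]$ and equals $4$ at $p=1$. That is exactly the paper's second pair. Alternatively, pushing your expansion one order further also works: $\Delta=-\tfrac{p(1-p)(2-p)}{6}\,2^{2/p}\varepsilon^4+O(\varepsilon^6)<0$.
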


Proposition~\ref{prop:xu-tangent-no-direction} shows that no global
analogue of Xu's tangent inequality \eqref{eq:xu} can hold with a prescribed direction
throughout the quasi-Banach range. We therefore impose the natural
order-segment condition \(x\pm y\ge0\), under which the relevant
first-order term remains finite.

Let \(0<p<1\), and let \(x,y\in L_p(\Omega;\mathbb R)\) satisfy
\(x\pm y\ge0\) almost everywhere on \(\Omega\). Define the
commutative positive-cone tangent form by
\begin{equation*}
	\pair{J_{L_p,+}(x)}{y}
	:=
	\begin{cases}
		\displaystyle
		\|x\|_{L_p}^{\,2-p}
		\int_{\{x>0\}}x^{p-1}y\,d\mu,
		& x\ne0,\\[3mm]
		0, & x=0.
	\end{cases}
\end{equation*}

With this notation, the sharp commutative positive-cone tangent
inequality takes the following form.

\begin{thm}\label{thm:commutative-positive-cone-tangent}
	Let \(0<p<1\), and let \(x,y\in L_p(\Omega;\mathbb R)\) satisfy
	\(x\pm y\ge0\) almost everywhere on \(\Omega\). Then
	\begin{equation*}
		\|x+y\|_{L_p}^2
		\le
		\|x\|_{L_p}^2
		+2\pair{J_{L_p,+}(x)}{y}
		+p\,2^{2/p-2}\|y\|_{L_p}^2.
	\end{equation*}
	The coefficient \(p\,2^{2/p-2}\) is optimal.
\end{thm}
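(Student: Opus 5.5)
The plan is to reduce the statement to a one-parameter family of scalar inequalities, by normalising away \(x\) and treating \(y/x\) as a random variable. Both sides of the claimed inequality are homogeneous of degree \(2\) in \((x,y)\), so the case \(x=0\) is trivial (then \(y=0\)) and we may assume \(\|x\|_{L_p}=1\). The order condition \(x\pm y\ge0\) forces \(|y|\le x\); hence \(y=0\) a.e. on \(\{x=0\}\), and \(s:=y/x\) is well defined with values in \([-1,1]\) on \(\{x>0\}\). Let \(\nu:=x^p\,d\mu\) restricted to \(\{x>0\}\); this is a probability measure, and we write \(E\) for expectation with respect to \(\nu\). In this notation
\[
\|x+y\|_{L_p}^2=\bigl(E(1+s)^p\bigr)^{2/p},\qquad
\pair{J_{L_p,+}(x)}{y}=E s,\qquad
\|y\|_{L_p}^2=\bigl(E|s|^p\bigr)^{2/p}.
\]
So the theorem becomes the probabilistic inequality
\[
\bigl(E(1+s)^p\bigr)^{2/p}\le 1+2Es+C\bigl(E|s|^p\bigr)^{2/p},\qquad C=p\,2^{2/p-2},
\]
to be proved for every random variable \(s\) with values in \([-1,1]\).

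Next I would reduce to extremal distributions. Fix the two constraints \(m=Es\) and \(b=E|s|^p\), and maximise \(E(1+s)^p\) over laws on \([-1,1]\). This is a linear program in the law with two linear constraints, so an optimiser is supported on at most three points. I would then study the shape of \(t\mapsto(1+t)^p\) against the constraint functions \(t\) and \(|t|^p\), using the concavity of \((1+t)^p\) and the concavity of \(|t|^p\) on each half-line, to push the support to \(\{-1,0,t_0\}\) or to the endpoints \(\{-1,1\}\) together with \(0\). The key pointwise tool would be the sharpest inequality of the form \((1+t)^p\le 1+pt+\kappa|t|^p\) on \([-1,1]\). Averaging it gives \(E(1+s)^p\le 1+pm+\kappa b\). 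Since \(2/p>1\), \(u\mapsto u^{2/p}\) is convex and \(1+pm+\kappa b\) may exceed \(1\), so we cannot linearise it by a simple tangent bound. Instead I would finish with a two-variable calculus inequality
\[
(1+pm+\kappa b)^{2/p}\le 1+2m+Cb^{2/p}
\]
in \((m,b)\) over the admissible region, keeping the exact relation between \(m\) and \(b\) for the extremal three-point laws rather than a lossy pointwise bound.

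The main obstacle is this final finite-dimensional optimisation: identifying exactly where the constant \(p\,2^{2/p-2}\) comes from. The factor \(2^{2/p}\) strongly suggests that the extremal configuration involves the endpoint \(s=1\), where \(1+s=2\), mixed with mass at \(s=0\) or \(s=-1\), with some parameter tending to a limit. I would first parametrise two-point laws (mass \(a\) at \(s=1\), the rest at \(s=-1\) or at \(0\)) and compute \(\sup\) of
\[
\frac{\bigl(E(1+s)^p\bigr)^{2/p}-1-2Es}{\bigl(E|s|^p\bigr)^{2/p}},
\]
checking whether it equals \(C\). Only then would I verify that three-point laws do no better.

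For optimality, I would realise the extremal law from the previous step (or a sequence of laws approaching it) concretely on a measure space, taking \(x\) and \(y\) as simple functions on two or three sets of suitable measures. I would then verify that the ratio above tends to \(p\,2^{2/p-2}\), which shows that no smaller coefficient works.
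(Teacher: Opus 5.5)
\textbf{There is a genuine gap.} Your reformulation is correct. With $\|x\|_{L_p}=1$, $s=y/x$ and $\nu=x^p\,d\mu$, the theorem is exactly
\[
\bigl(\mathbb E(1+s)^p\bigr)^{2/p}\le 1+2\mathbb E s+C\bigl(\mathbb E|s|^p\bigr)^{2/p},\qquad C=p\,2^{2/p-2},
\]
and the extreme-point argument does reduce this to laws with at most three atoms. But that three-point inequality is the whole content of the theorem, and you give no argument for it. The route you sketch cannot supply one:
\begin{enumerate}[label=\textup{(\roman*)}]
\item By Bernoulli's inequality, $(1+t)^p\le 1+pt$ on $[-1,1]$, and $((1+t)^p-1-pt)/|t|^p\to0^-$ as $t\to0$. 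So the sharpest $\kappa$ is $0$, and averaging yields only $\mathbb E(1+s)^p\le 1+pm$, which discards $b$ entirely.
\item The follow-up inequality $(1+pm)^{2/p}\le 1+2m+Cb^{2/p}$ is false on the admissible region. Take $s=\mathbf 1_F$ with $\nu(F)=\delta$. Then $m=b=\delta$ and $(1+p\delta)^{2/p}-1-2\delta\sim(2-p)\delta^2\gg C\delta^{2/p}$, since $2/p>2$. A larger $\kappa$ only adds a first-order excess $2\kappa\delta/p$.
\item Falling back on the ``exact relation between $m$ and $b$'' for three-point laws just returns you to the unsolved five-parameter problem.
\end{enumerate}

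Your heuristic for the extremal law is also wrong. Laws on $\{-1,1\}$ give the ratio $4a^{2/p}-4a+1\le 1<C$. Laws on $\{0,1\}$ also stay strictly below $C$: for $p=1/2$ the maximum is about $1.99$, while $C=2$. The supremum is approached only by $s=\varepsilon\mathbf 1_F$ with $\nu(F)=1/2$ and $\varepsilon\to0$. The factor $2^{2/p}$ comes from the weight $1/2$, not from $1+s=2$. So the optimality half of your plan also lacks its example; the paper uses $x=(1,1)$, $y_\varepsilon=(0,\varepsilon)$ on two atoms of mass $1/2$.

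\textbf{The missing idea.} Because the extremal behaviour is second order at $s\approx0$, what you need is a uniform curvature bound along the segment, not a global finite-dimensional optimisation. This is how the paper proceeds.
\begin{enumerate}[label=\textup{(\roman*)}]
\item Set $A_\tau=\mathbb E(1+\tau s)^p$ and $\phi(\tau)=A_\tau^{2/p}$ for $|\tau|\le1$. For $|\tau|<1$,
\[
\tfrac12\phi''(\tau)=A_\tau^{2/p}\bigl[(2-p)(\mathbb E_\tau r)^2-(1-p)\mathbb E_\tau r^2\bigr],
\]
where $r=s/(1+\tau s)$ and $\mathbb E_\tau$ is expectation under the tilted probability $(1+\tau s)^p\,d\nu/A_\tau$.
\item H\"older's inequality gives $\mathbb E|r|\le(\mathbb E|r|^p)^{1/(2-p)}(\mathbb E r^2)^{(1-p)/(2-p)}$. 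Then maximise $\bigl((2-p)-(1-p)\sigma\bigr)\sigma^{2(1-p)/p}$ over $\sigma=\mathbb E r^2/(\mathbb E r)^2\ge1$; the maximum is $C$, attained at $\sigma=2$.
\item Together these give $\phi''(\tau)\le 2C\,A_\tau^{2/p}(\mathbb E_\tau|r|^p)^{2/p}=2C(\mathbb E|s|^p)^{2/p}$, which does not depend on $\tau$.
\item Hence $\phi(\tau)-C\tau^2(\mathbb E|s|^p)^{2/p}$ is concave on $(-1,1)$ and continuous on $[-1,1]$. Its supporting line at $\tau=0$, evaluated at $\tau=1$, is exactly the theorem.
\end{enumerate}
This argument never has to locate the extremal law. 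The equality case of the moment bound ($r$ a multiple of the indicator of a set of tilted probability $1/2$) is what points to the sharpness example.
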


Applying Theorem~\ref{thm:commutative-positive-cone-tangent} with
\(y\) and \(-y\), respectively, and adding the resulting inequalities
cancels the first-order terms and yields the following sharp midpoint
form.

\begin{cor}\label{cor:commutative-positive-cone-midpoint}
	Let \(0<p<1\), and let \(x,y\in L_p(\Omega;\mathbb R)\) satisfy
	\(x\pm y\ge0\) almost everywhere on \(\Omega\). Then
	\begin{equation*}
		\|x+y\|_{L_p}^2+\|x-y\|_{L_p}^2
		\le
		2\|x\|_{L_p}^2
		+p\,2^{2/p-1}\|y\|_{L_p}^2.
	\end{equation*}
	The coefficient \(p\,2^{2/p-1}\) is optimal.
\end{cor}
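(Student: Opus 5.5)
The inequality follows by applying Theorem~\ref{thm:commutative-positive-cone-tangent} once to the pair \((x,y)\) and once to \((x,-y)\), and adding the two estimates. The hypothesis \(x\pm y\ge0\) is symmetric under \(y\mapsto -y\), so both applications are legitimate. The tangent form is linear in its second argument, \(\pair{J_{L_p,+}(x)}{-y}=-\pair{J_{L_p,+}(x)}{y}\). This holds because the integral \(\int_{\{x>0\}}x^{p-1}y\,d\mu\) is finite: \(|y|\le x\) a.e. gives \(|x^{p-1}y|\le x^p\) on \(\{x>0\}\), and \(y=0\) a.e. on \(\{x=0\}\). Hence the first-order terms cancel on summing, and
\[
\|x+y\|_{L_p}^2+\|x-y\|_{L_p}^2\le 2\|x\|_{L_p}^2+2\cdot p\,2^{2/p-2}\|y\|_{L_p}^2
=2\|x\|_{L_p}^2+p\,2^{2/p-1}\|y\|_{L_p}^2 .
\]
The case \(x=0\) is trivial, since then \(y=0\).

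Optimality is the more delicate part. Sharpness in Theorem~\ref{thm:commutative-positive-cone-tangent} does not automatically transfer, because the symmetrization might lose sharpness. I will therefore exhibit an explicit extremal family directly. The constant \(2^{2/p}\) suggests the example from the tangent theorem: \(\Omega\) consists of two atoms of mass one (or a large atom plus a small one), with \(x\) supported near where \(y\) saturates the constraint \(|y|=x\).

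The first candidate I would try takes \(x=(1,\varepsilon)\) and \(y=(0,\varepsilon)\). Then \(x+y=(1,2\varepsilon)\) and \(x-y=(1,0)\). Expanding,
\[
\|x\pm y\|_{L_p}^2=(1+(2\varepsilon)^p)^{2/p}\ \text{or}\ 1,\qquad \|x\|_{L_p}^2=(1+\varepsilon^p)^{2/p},
\]
so the left side minus \(2\|x\|_{L_p}^2\) is approximately \(\tfrac2p(2^p-2)\varepsilon^p\), while \(\|y\|_{L_p}^2=\varepsilon^2\). Since \(p<1\), this ratio blows up, so the family is not extremal as is. I will therefore adjust the scales and take \(x=(1,t)\), \(y=(0,t)\) with \(t\) of order \(1\).

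The resulting ratio is
\[
\frac{(1+2^pt^p)^{2/p}+1-2(1+t^p)^{2/p}}{t^2},
\]
and its supremum should equal \(p\,2^{2/p-1}\). I expect it to be approached as \(t\to\infty\) along a suitably normalized family. Swapping the roles so that the dominant mass sits where \(y\) saturates gives \((2^p t^p)^{2/p}=4t^2\) against \(2t^2\), and the lower-order corrections must produce the factor \(p\). Pinning down the exact extremal asymptotics is the main obstacle.

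If this direct computation becomes messy, I would instead reuse the extremal sequence from the optimality part of Theorem~\ref{thm:commutative-positive-cone-tangent}. I would check that it is symmetric, meaning it remains extremal when \(y\) is replaced by \(-y\), for instance by choosing the measure space and functions invariant under a measure-preserving involution that sends \(y\) to \(-y\) and fixes \(x\). Along such a sequence, equality asymptotics in both tangent inequalities add up to equality asymptotics in the midpoint form, which yields optimality of \(p\,2^{2/p-1}\).
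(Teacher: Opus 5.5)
Your derivation of the inequality (apply Theorem~\ref{thm:commutative-positive-cone-tangent} to $(x,y)$ and to $(x,-y)$, then add) is exactly the paper's argument and is fine. The gap is in the optimality part: the family you commit to, $x=(1,t)$, $y=(0,t)$, is not extremal for any $t$. As $t\to\infty$ its ratio tends to $2$, since the leading terms are $4t^2-2t^2$, and no lower-order correction can change a limit. But $p\,2^{2/p-1}>2$ for $0<p<1$, because $p\,2^{2/p}$ is strictly decreasing on $(0,1]$ with value $4$ at $p=1$. Nor is the optimum reached at finite $t$: for $p=1/2$ and $s=t^{1/2}$ your ratio equals exactly $2+(8\sqrt2-8)s^{-1}-(8-4\sqrt2)s^{-3}$, whose maximum over $s>0$ is about $3.52$, strictly below $p\,2^{2/p-1}=4$. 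Your first candidate also behaves differently from what you say: its ratio tends to $-\infty$, since $2^p-2<0$. A limit of $+\infty$ would have contradicted the inequality you had just proved.

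Your fallback points in the right direction, but the proposed mechanism backfires. Suppose a measure-preserving involution fixes $x$ and sends $y$ to $-y$. Then, in the notation of the proof of Theorem~\ref{thm:commutative-positive-cone-tangent}, $\mathbb E_{\nu_0}r_0=0$, so $\frac12F''(0)=-(1-p)A_0^{2/p}\,\mathbb E_{\nu_0}r_0^2<0$. Small symmetric perturbations therefore give a \emph{negative} second-order midpoint remainder. No symmetry is needed. If $y_\varepsilon=\varepsilon y$ with $F(t)=\|x+ty\|_p^2$ of class $C^2$ near $0$, then $F(\varepsilon)+F(-\varepsilon)-2F(0)=F''(0)\varepsilon^2+o(\varepsilon^2)$. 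This is exactly twice the tangent remainder $F(\varepsilon)-F(0)-\varepsilon F'(0)=\frac12F''(0)\varepsilon^2+o(\varepsilon^2)$, so a tangent-extremal family of this form is automatically midpoint-extremal. The paper takes two atoms of mass $1/2$, $x=(1,1)$, $y_\varepsilon=(0,\varepsilon)$. Then $\|x\pm y_\varepsilon\|_p^2=1\pm\varepsilon+\frac p4\varepsilon^2+o(\varepsilon^2)$, the sum is $2+\frac p2\varepsilon^2+o(\varepsilon^2)$, and $\|y_\varepsilon\|_p^2=2^{-2/p}\varepsilon^2$, so the ratio tends to $p\,2^{2/p-1}$. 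The missing idea is the regime. In both of your candidates $|y|=x$ on the support of $y$. The extremal configuration instead keeps $x$ of unit size there and makes $y/x$ a small multiple of the indicator of a set of $\nu_0$-probability $1/2$, which is the equality case of Lemma~\ref{lem:moment-p-below-one}.
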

For \(0<p<1\), let \(S_p\) denote the Schatten
\(p\)-class, consisting of all compact linear operators
\(A\colon \ell_2\to\ell_2\) such that \(|A|^p\) is trace class.
The space \(S_p\) is equipped with the quasi-norm
\[
\|A\|_{S_p}
=
\bigl(\operatorname{Tr}|A|^p\bigr)^{1/p}
=
\left(\operatorname{Tr}(A^*A)^{p/2}\right)^{1/p}.
\]
We next pass from the commutative setting to Schatten \(p\)-classes.
In what follows, inequalities between self-adjoint operators are
understood in the L\"owner order. For self-adjoint \(A,B\in S_p\)
satisfying \(A\pm B\ge0\), define the Schatten positive-cone tangent
form by
\[
\pair{J_{S_p,+}(A)}{B}
:=
\frac12
\left.\frac{d}{dt}\right|_{t=0}
\|A+tB\|_{S_p}^2.
\]
As will follow from Proposition~\ref{prop:positive-schatten-plane} and
\eqref{eq:same-line-function}, this derivative exists and is finite
under the condition \(A\pm B\ge0\). In the finite-dimensional setting, let \(\mathbb H_n\) denote the
set of \(n\times n\) Hermitian matrices, and let
\(A\in\mathbb H_n\) be positive definite.
Then, for every \(B\in\mathbb H_n\), standard differentiation of trace
functions yields
\[
\pair{J_{S_p,+}(A)}{B}
=
\|A\|_{S_p}^{\,2-p}
\Ree\Tr\bigl(A^{p-1}B\bigr).
\]

Theorem~\ref{thm:commutative-positive-cone-tangent} admits the following sharp
noncommutative extension.

\begin{thm}\label{thm:positive-cone-tangent}
	Let \(0<p<1\), and let \(A,B\in S_p\) be self-adjoint operators
	satisfying \(A\pm B\ge0\). Then
	\begin{equation*}
		\|A+B\|_{S_p}^2
		\le
		\|A\|_{S_p}^2
		+2\pair{J_{S_p,+}(A)}{B}
		+p\,2^{2/p-2}\|B\|_{S_p}^2.
	\end{equation*}
	The coefficient \(p\,2^{2/p-2}\) is optimal.
\end{thm}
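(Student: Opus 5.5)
We reduce Theorem~\ref{thm:positive-cone-tangent} to a one-variable statement along the segment $t\mapsto A+tB$, $t\in[-1,1]$, and then to the commutative Theorem~\ref{thm:commutative-positive-cone-tangent}. Set $P=A+B\ge0$, $Q=A-B\ge0$, so that $A=(P+Q)/2$ and $B=(P-Q)/2$. Every point of the segment is a positive combination
\[
A+tB=\tfrac{1+t}{2}P+\tfrac{1-t}{2}Q .
\]
Define
\[
f(t)=\|A+tB\|_{S_p}^2=\Bigl(\Tr\bigl(\tfrac{1+t}{2}P+\tfrac{1-t}{2}Q\bigr)^p\Bigr)^{2/p}.
\]
The inequality to prove is $f(1)\le f(0)+f'(0)+p\,2^{2/p-2}\|B\|_{S_p}^2$. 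Presumably this is what Proposition~\ref{prop:positive-schatten-plane} and \eqref{eq:same-line-function} encode: a description of $g(s,r)=\Tr(sP+rQ)^p$ on the positive quadrant, from which $f$ is differentiable at $0$.

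The key step is to bound the Schatten quantities by commutative ones. Since $s\mapsto s^p$ is operator concave for $0<p<1$, the map $(s,r)\mapsto \Tr(sP+rQ)^p$ is concave on the quadrant. The target inequality is linear in $f(1)$ and $f'(0)$, so the natural approach is to control the \emph{second-order remainder}
\[
R=f(1)-f(0)-f'(0),
\]
which equals an integral of $f''$ over $[0,1]$ against the weight $(1-t)$.

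I would try to pass to the commutative case by a pinching or majorization argument. The eigenvalues of $A+tB$ are controlled via Ky Fan-type majorization between $\lambda(sP+rQ)$ and $s\lambda(P)+r\lambda(Q)$, together with Schur-concavity of $\Tr(\cdot)^p$. For the endpoint value $f(1)=\|P\|_p^2$ and for $f(0)$ these comparisons go in opposite directions, however, so a direct eigenvalue comparison will not give the remainder bound.

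The cleaner route is therefore to work with $R$ directly. I would show $f''(t)\le$ (the corresponding commutative second derivative for the "worst" configuration), using the explicit second-derivative formula for $\Tr(X+tB)^p$ via divided differences in the eigenbasis of $X=A+tB$. For $0<p<1$, the divided differences of $\lambda\mapsto\lambda^{p-1}$ are dominated by those of the diagonal part, so $\frac{d^2}{dt^2}\Tr(X+tB)^p\ge p(p-1)\sum_{ij}(\text{divided differences})|B_{ij}|^2$. Combined with the chain rule for $u\mapsto u^{2/p}$, this reduces to a commutative two-point inequality in the variables $a_i=\lambda_i$ and $b_{ij}$. The constraint $-X\le B\le X$ in the commutative reduction becomes $|b|\le a$ coordinatewise, and Theorem~\ref{thm:commutative-positive-cone-tangent} then supplies the constant $p\,2^{2/p-2}$, since the extremal case $x=\mathbf 1_{E}$, $y=\pm\mathbf 1_E$ split evenly is achieved by diagonal (commuting) $A,B$.

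The main obstacle is exactly this noncommutative-to-commutative reduction of the remainder, since the quasi-norm is not convex and the standard Ball--Carlen--Lieb tools fail. If the second-derivative route is too lossy, I would instead apply Theorem~\ref{thm:commutative-positive-cone-tangent} to the functions given by the diagonals of $P$ and $Q$ in a suitable basis. By concavity of $\Tr(\cdot)^p$ and the pinching inequality, pinching increases $\Tr(\cdot)^p$ for $p<1$, and the derivative term at $t=0$ is preserved when the basis diagonalizes $A$. This handles $f(0)$ and $f'(0)$ exactly, but I expect it to lose at $f(1)$, so it needs care.

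Optimality follows immediately: the commuting diagonal choice reduces to the commutative case, where Theorem~\ref{thm:commutative-positive-cone-tangent} already asserts that $p\,2^{2/p-2}$ is optimal.
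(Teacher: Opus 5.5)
Your proposal has a genuine gap. The noncommutative-to-commutative reduction, which you rightly call the heart of the matter, is never carried out, and both mechanisms you suggest lose exactly where the inequality is sharp. Each of them replaces $B$ by its diagonal in an eigenbasis of $A+tB$ (or of $A$).

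In the second-derivative route you need an \emph{upper} bound on $F''=\frac{d^2}{dt^2}\Tr(A+tB)^p$, which is negative, not a lower bound. The natural way to get one is to discard the negative off-diagonal terms $p\,k(\lambda_i,\lambda_j)|B_{ij}|^2$, where $k$ is the divided difference of $\lambda\mapsto\lambda^{p-1}$. That leaves the commutative data $x=(\lambda_i)$, $y=(B_{ii})$.

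In the pinching route with respect to $A$, the quantities $f(0)$ and $f'(0)$ are preserved. Moreover $f(1)$ moves in the right direction, since $\Tr(\cdot)^p$ increases under pinching of positive operators. So the loss is not at $f(1)$, as you expected, but in the quadratic term: for $0<p<1$ the $\ell_p$-quasi-norm of the diagonal of $B$ is not bounded by $\|B\|_{S_p}$.

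Concretely, let $A=\operatorname{diag}(2+\delta,2)$ and $B=\begin{pmatrix}1&1\\1&1\end{pmatrix}$.
\begin{itemize}
\item $A\pm B\ge0$ and $\|B\|_{S_p}^2=4$, while the diagonal $(1,1)$ has squared quasi-norm $2^{2/p}>4$.
\item At $\delta=0$, diagonalizing $B$ gives $\tfrac12 f''(0)=p\,2^{2/p}=p\,2^{2/p-2}\|B\|_{S_p}^2$, so the sharp bound is attained.
\item The diagonal-only expression equals $2^{2/p}$, strictly larger.
\item By continuity the same strict loss persists for small $\delta>0$, where the eigenbasis of $A$ is forced.
\end{itemize}
So there is no slack to absorb the loss. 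Treating the entries $b_{ij}$ as a commutative function does not help either: $\|(b_{ij})\|_{\ell_p}\neq\|B\|_{S_p}$ in general, and $-X\le B\le X$ is not a coordinatewise constraint $|b|\le a$. In infinite dimensions there is a further problem: compact $A+tB$ is never invertible, so the divided-difference formula involving $\lambda^{p-1}$ is not directly available.

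The missing idea is an \emph{exact} commutative model of the whole plane spanned by $P=A+B$ and $Q=A-B$. This is what Proposition~\ref{prop:positive-schatten-plane} provides; it is the entire reduction, not merely a differentiability statement.
\begin{itemize}
\item Hein\"avaara's tracial joint spectral measure has support in $\mathbb R_+^2$ because $P,Q\ge0$.
\item For general compact operators, finite-rank approximation and weak-$*$ compactness on the simplex $K$ extend this.
\item The result is nonnegative $c,d$ on a measure space with $\|\alpha P+\beta Q\|_{S_p}=\|\alpha c+\beta d\|_{L_p}$ for all real $\alpha,\beta$.
\end{itemize}
Setting $x=(c+d)/2$ and $y=(c-d)/2$ gives $x\pm y\ge0$ and $\|A+tB\|_{S_p}=\|x+ty\|_{L_p}$ for every $t\in\mathbb R$. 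This identifies $f(0)$, $f(1)$ and the tangent terms (differentiate at $t=0$). Crucially, it also gives $\|B\|_{S_p}=\|y\|_{L_p}$, which corresponds to $\alpha=-\beta$. That lies outside the positive quadrant where your concavity and majorization tools operate. Theorem~\ref{thm:commutative-positive-cone-tangent} then applies verbatim.

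Your optimality argument via commuting diagonal matrices is fine and matches the paper's ($A=I_2$, $B_\varepsilon=\operatorname{diag}(0,\varepsilon)$).
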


As in the commutative case, symmetrizing the tangent inequality produces
a midpoint inequality. More precisely, applying
Theorem~\ref{thm:positive-cone-tangent} with \(B\) and \(-B\),
respectively, and adding the resulting inequalities gives the following
corollary.

\begin{cor}\label{cor:positive-cone-midpoint}
	Let \(0<p<1\), and let \(A,B\in S_p\) be self-adjoint operators
	satisfying \(A\pm B\ge0\). Then
	\begin{equation*}
		\|A+B\|_{S_p}^2+\|A-B\|_{S_p}^2
		\le
		2\|A\|_{S_p}^2
		+p\,2^{2/p-1}\|B\|_{S_p}^2.
	\end{equation*}
	The coefficient \(p\,2^{2/p-1}\) is optimal.
\end{cor}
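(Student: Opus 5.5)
The inequality follows by applying Theorem~\ref{thm:positive-cone-tangent} to the pairs \((A,B)\) and \((A,-B)\) and adding; optimality is handled separately. First I check that the hypothesis is symmetric: \(A\pm B\ge0\) is the same condition as \(A\pm(-B)\ge0\), so Theorem~\ref{thm:positive-cone-tangent} applies to both pairs. The first-order term is odd in \(B\), because \(\pair{J_{S_p,+}(A)}{B}\) is defined as \(\tfrac12\frac{d}{dt}\big|_{t=0}\|A+tB\|_{S_p}^2\), and replacing \(B\) by \(-B\) reverses \(t\). This derivative exists as a two-sided derivative by the remark following the definition (Proposition~\ref{prop:positive-schatten-plane}), so
\[
\pair{J_{S_p,+}(A)}{-B}=-\pair{J_{S_p,+}(A)}{B}.
\]
Since \(\|-B\|_{S_p}=\|B\|_{S_p}\), adding the two tangent inequalities cancels the linear terms and gives
\[
\|A+B\|_{S_p}^2+\|A-B\|_{S_p}^2\le 2\|A\|_{S_p}^2+p\,2^{2/p-1}\|B\|_{S_p}^2 .
\]

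For optimality I would reduce to the commutative case. Choose \(A\) and \(B\) as commuting diagonal operators, that is, functions on \(\mathbb N\) with counting measure. Corollary~\ref{cor:commutative-positive-cone-midpoint} states that the constant \(p\,2^{2/p-1}\) is optimal in \(L_p\) of such a measure space, or at least for some measure space. To be self-contained, I would exhibit an explicit extremal family. Take \(A=\operatorname{diag}(1,1,\varepsilon)\) type configurations, or more simply \(A=\operatorname{diag}(1,1)\) and \(B=\operatorname{diag}(s,-s)\) with \(s\to1\). Then
\[
\|A\pm B\|_{S_p}^2=\bigl((1+s)^p+(1-s)^p\bigr)^{2/p},\qquad \|A\|_{S_p}^2=2^{2/p},\qquad \|B\|_{S_p}^2=2^{2/p}s^2 .
\]
At \(s=1\) the left side equals \(2\cdot 2^{2}=8\). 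Dividing \(8-2\cdot2^{2/p}\) by \(2^{2/p}\) gives \(2^{3-2/p}-2\), which is not obviously equal to the constant. So the extremal configuration must be chosen differently, for instance a degenerate, weighted family in which part of \(A\) is small compared with \(B\). Finding it is the main obstacle.

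The plan for that step is to take the extremal sequence used in the commutative case, which is guaranteed by the optimality in Corollary~\ref{cor:commutative-positive-cone-midpoint}. If that sequence lives on a nonatomic measure space, I would approximate it by simple functions. Simple functions with rational weights can be realised as diagonal matrices with repeated entries, because multiplicity plays the role of measure up to a common scaling, and both sides scale homogeneously in that scaling. Equality or near-equality therefore transfers to \(S_p\), which shows that no smaller constant works for Corollary~\ref{cor:positive-cone-midpoint}.
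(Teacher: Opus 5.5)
Your proof of the inequality is the same as the paper's. The hypothesis \(A\pm B\ge0\) is symmetric under \(B\mapsto -B\), the two-sided derivative defining \(\pair{J_{S_p,+}(A)}{B}\) is odd in \(B\), and adding the two instances of Theorem~\ref{thm:positive-cone-tangent} cancels the first-order terms.

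For optimality your route differs from the paper's. You do not produce an explicit extremal family. Instead you transfer the commutative optimality of Corollary~\ref{cor:commutative-positive-cone-midpoint} to \(S_p\), using rational-weight simple functions realised as diagonal matrices with repeated entries. This is legitimate and would handle any commutative extremal sequence, but two routine points need filling in:
\begin{enumerate}
\item Approximate the nonnegative functions \(x+y\) and \(x-y\) (not \(x\) and \(y\) separately) from below by simple functions supported on sets of finite measure. This keeps the order condition \(x\pm y\ge0\).
\item A strict violation with a constant \(C<p\,2^{2/p-1}\) is stable under small \(L_p\)-perturbations, because \(\bigl|\|f\|_p^p-\|g\|_p^p\bigr|\le\|f-g\|_p^p\). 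It is also stable under moving the atom masses to rationals. Scaling the measure by a common denominator \(N\) then multiplies every squared quasi-norm by \(N^{2/p}\).
\end{enumerate}

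Here the detour is unnecessary. The paper's commutative extremal pair \(x=(1,1)\), \(y_\varepsilon=(0,\varepsilon)\) lives on two atoms of mass \(1/2\). Rescaling gives directly \(A=I_2\), \(B_\varepsilon=\operatorname{diag}(0,\varepsilon)\), which is exactly the paper's example. Then \(\|B_\varepsilon\|_{S_p}^2=\varepsilon^2\) and
\[
\|A\pm B_\varepsilon\|_{S_p}^2=\bigl(1+(1\pm\varepsilon)^p\bigr)^{2/p}=2^{2/p}\pm2^{2/p}\varepsilon+p\,2^{2/p-2}\varepsilon^2+o(\varepsilon^2).
\]
So the excess over \(2\|A\|_{S_p}^2\), divided by \(\|B_\varepsilon\|_{S_p}^2\), tends to \(p\,2^{2/p-1}\).

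Your trial family \(B=\operatorname{diag}(s,-s)\) fails because it perturbs all of the mass of \(A\) antisymmetrically. For small \(s\) its excess is \(2^{2/p+1}(p-1)s^2+o(s^2)<0\). The equality case of Lemma~\ref{lem:moment-p-below-one}, where \(r\) is the indicator of an event of probability \(1/2\), points to the right choice. The extremal \(B\) is a small multiple of \(A\) on a part carrying half of \(\Tr A^p\), and vanishes on the rest.
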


Corollary~\ref{cor:commutative-positive-cone-midpoint} and
Corollary~\ref{cor:positive-cone-midpoint} provide partial affirmative
answers to Question~\ref{ques:xu-parallelogram} in the commutative and
noncommutative settings, respectively. Indeed, they give sharp
one-sided substitutes for the parallelogram identity along positive
order segments.

In the commutative setting, taking \(p=1/2\) in
Corollary~\ref{cor:commutative-positive-cone-midpoint} yields
\[
\|x+y\|_{L_{1/2}}^2+\|x-y\|_{L_{1/2}}^2
\le
2\|x\|_{L_{1/2}}^2+4\|y\|_{L_{1/2}}^2,
\qquad
x\pm y\ge0
\quad\text{a.e.}
\]
Likewise, taking \(p=1/2\) in
Corollary~\ref{cor:positive-cone-midpoint} gives the Schatten-class
inequality
\[
\|A+B\|_{S_{1/2}}^2+\|A-B\|_{S_{1/2}}^2
\le
2\|A\|_{S_{1/2}}^2+4\|B\|_{S_{1/2}}^2,
\qquad
A\pm B\ge0.
\]
The coefficient \(4\) is optimal in both inequalities.

Thus, although the formal extension of Xu's tangent inequality \eqref{eq:xu} has no
fixed direction on the entire quasi-Banach space, a sharp
parallelogram-type inequality survives along positive order segments
in both classical \(L_{1/2}\)-spaces and the noncommutative Schatten
class \(S_{1/2}\). Theorems~\ref{thm:commutative-positive-cone-tangent}
and~\ref{thm:positive-cone-tangent} provide the corresponding stronger
tangent refinements, retaining both the exact first-order
term and the optimal quadratic remainder. 
\begin{rem}\label{rem:positive-cone-limitations}
	The order assumptions in
	Theorems~\ref{thm:commutative-positive-cone-tangent}
	and~\ref{thm:positive-cone-tangent} are essential.

	First, even under the positive order-segment assumption, the tangent
	inequalities in these two theorems cannot be replaced by uniform lower
	estimates. More precisely, there is no finite constant \(d_p\) such that
	\[
	\|x+y\|_{L_p}^2
	\ge
	\|x\|_{L_p}^2
	+2\pair{J_{L_p,+}(x)}{y}
	+d_p\|y\|_{L_p}^2
	\]
	holds for all \(x,y\in L_p(\Omega;\mathbb R)\) satisfying
	\(x\pm y\ge0\). Indeed, in \(\ell_p^2\), take
	\[
	x_\varepsilon=(1,\varepsilon),
	\qquad
	y_\varepsilon=(0,a\varepsilon),
	\qquad 0<a<1.
	\]
	Then \(x_\varepsilon\pm y_\varepsilon>0\), but
	\[
		\frac{
			\|x_\varepsilon+y_\varepsilon\|_p^2
			-\|x_\varepsilon\|_p^2
			-2\pair{J_{L_p,+}(x_\varepsilon)}{y_\varepsilon}
		}{\|y_\varepsilon\|_p^2}
	\sim
		\frac{2\bigl((1+a)^p-1-pa\bigr)}{pa^2}
		\varepsilon^{p-2}
		\longrightarrow-\infty.
	\]
	By identifying \(\ell_p^2\) with the diagonal subspace of
	\(S_p^2\), the same example also rules out a uniform lower analogue of
	Theorem~\ref{thm:positive-cone-tangent}.
	
	Second, the order assumptions
	\[
	x\pm y\ge0
	\qquad\text{and}\qquad
	A\pm B\ge0
	\]
	are needed to ensure that the first-order tangent terms appearing in
	Theorems~\ref{thm:commutative-positive-cone-tangent}
	and~\ref{thm:positive-cone-tangent} are finite. Without these assumptions,
	the first variation of the squared quasi-norm may be infinite. For
	example, for \(x=(1,0)\) and \(y=(0,1)\) in \(\ell_p^2\),
	\[
	\frac{\|x+ty\|_p^2-\|x\|_p^2}{t}
	=
	\frac{(1+t^p)^{2/p}-1}{t}
	\sim
	\frac2p\,t^{p-1}
	\longrightarrow\infty
	\qquad(t\downarrow0).
	\]
	Again, the corresponding diagonal matrices give the same obstruction
	in the Schatten setting.
\end{rem}

\vspace{0.1in}
\noindent\textbf{Organization of the paper.}
In Section~\ref{sec:proof-main}, we prove
Theorem~\ref{thm:main1}.
In Section~\ref{sec:equivalence-bregman}, we prove an
equivalence theorem between midpoint and tangent inequalities,
introduce the Bregman divergence associated with the squared
\(L_p\)-norm, and reformulate Theorem~\ref{thm:main1} in Bregman form.
Section~\ref{sec:conditional-expectations} establishes an exact
Bregman--Pythagorean decomposition for conditional expectations and
derives the corresponding anchored inequalities.
In Section~\ref{sec:martingale}, we iterate this decomposition along
noncommutative filtrations to obtain finite and infinite anchored
martingale inequalities.
Finally, Section~\ref{sec:quasi-banach} concerns the quasi-Banach range
\(0<p<1\), where we prove
Proposition~\ref{prop:xu-tangent-no-direction} and
Theorems~\ref{thm:commutative-positive-cone-tangent}
and~\ref{thm:positive-cone-tangent}.
	
	\section{Proof of
		Theorem~\ref{thm:main1}} \label{sec:proof-main}
	
	In this section, we give a proof of Theorem~\ref{thm:main1}.

	We begin with a scalar divided-difference estimate.
	\begin{lem}
		\label{lem:scalar-divided-difference}
	Let \(1<p\le2\) and put
$
\gamma:=2/p-1.
$		Let
	$
		\psi_p(s):=\sgn(s)|s|^{p-1},
s\in\mathbb R.
$
		For \((s,t)\ne(0,0)\), define
		\[
		k_p(s,t):=
		\begin{cases}
			\displaystyle
			\frac{\psi_p(s)-\psi_p(t)}{s-t},&s\ne t,\\[3mm]
			(p-1)|s|^{p-2},&s=t\ne0.
		\end{cases}
		\]
		Then
		\begin{equation}\label{eq:scalar-divided-difference}
			k_p(s,t)
			\ge
			(p-1)2^{\gamma}
			\bigl(|s|^p+|t|^p\bigr)^{-\gamma}.
		\end{equation}
	\end{lem}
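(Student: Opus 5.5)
The plan is to exploit homogeneity and symmetry, and then split according to the signs of \(s\) and \(t\). Both sides of \eqref{eq:scalar-divided-difference} are symmetric in \((s,t)\) and invariant under \((s,t)\mapsto(-s,-t)\), because \(\psi_p\) is odd. Both sides are also positively homogeneous of the same degree: \(k_p\) has degree \(p-2\), and the right-hand side has degree \(-p\gamma=p-2\). This homogeneity is only a consistency check; I will not normalize. The argument splits into two cases: \(s,t\) of the same strict sign (including \(s=t\)), and \(s,t\) of opposite signs or one of them zero.

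\emph{Same-sign case.} By symmetry we may take \(0<t\le s\). Since \(\psi_p\) is \(C^1\) on \((0,\infty)\), we have
\[
k_p(s,t)=(p-1)\int_0^1\bigl(t+\theta(s-t)\bigr)^{p-2}\,d\theta ,
\]
which also covers \(s=t\). The map \(u\mapsto u^{p-2}\) is convex on \((0,\infty)\) because \(p-2\le0\). Jensen's inequality in \(\theta\) therefore gives
\[
k_p(s,t)\ge(p-1)\Bigl(\tfrac{s+t}{2}\Bigr)^{p-2}=(p-1)\Bigl(\bigl(\tfrac{s+t}{2}\bigr)^{p}\Bigr)^{-\gamma}.
\]
Convexity of \(u^p\) (the power-mean inequality) gives \(\bigl(\tfrac{s+t}{2}\bigr)^p\le\tfrac{s^p+t^p}{2}\). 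Since \(-\gamma\le0\), this yields
\[
k_p(s,t)\ge(p-1)\Bigl(\tfrac{s^p+t^p}{2}\Bigr)^{-\gamma}=(p-1)2^{\gamma}(s^p+t^p)^{-\gamma},
\]
which is exactly \eqref{eq:scalar-divided-difference}.

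\emph{Opposite-sign case.} Write \(s=a\ge0\) and \(t=-b\le0\) with \(a+b>0\). Then
\[
k_p=\frac{a^{p-1}+b^{p-1}}{a+b}.
\]
Since \(0<p-1\le1\), subadditivity gives \(a^{p-1}+b^{p-1}\ge(a+b)^{p-1}\), so \(k_p\ge(a+b)^{p-2}=\bigl((a+b)^p\bigr)^{-\gamma}\). Next, \((a+b)^p\le2^{p-1}(a^p+b^p)\), and \(-\gamma\le0\), so
\[
k_p\ge2^{-(p-1)\gamma}(a^p+b^p)^{-\gamma}.
\]
It remains to compare constants, i.e.\ to show \(2^{-(p-1)\gamma}\ge(p-1)2^{\gamma}\). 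Since \(p\gamma=2-p\), this is equivalent to \(f(p):=(p-1)2^{2-p}\le1\) on \((1,2]\). The function \(\log f(p)=\log(p-1)+(2-p)\log2\) is concave, and its derivative at \(p=2\) equals \(1-\log2>0\). Hence \(\log f\) is increasing on \((1,2]\), so \(f\le f(2)=1\).

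The only step that needs care is this final constant comparison in the opposite-sign case, since the crude bounds used there lose a factor. The point is that this loss is exactly absorbed by \(f\le1\), with equality only at \(p=2\). The same-sign case, by contrast, is sharp along the diagonal \(s=t\), where Jensen's and the power-mean inequality are both equalities and \eqref{eq:scalar-divided-difference} holds with equality, which explains the constant \((p-1)2^\gamma\).
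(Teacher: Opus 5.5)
Your proof is correct. The same-sign case matches the paper's argument: the integral form of the divided difference, Jensen for $u\mapsto u^{p-2}$, then the power-mean inequality.

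The opposite-sign case takes a different route. The paper never estimates $\frac{a^{p-1}+b^{p-1}}{a+b}$ directly. It uses the identity
\[
\frac{a^{p-1}+b^{p-1}}{a+b}-\frac{a^{p-1}-b^{p-1}}{a-b}
=\frac{2ab\,\bigl(b^{p-2}-a^{p-2}\bigr)}{(a+b)(a-b)}\ge0
\]
to show that flipping a sign can only increase the divided difference. The same-sign bound, which depends only on $|s|,|t|$, then applies verbatim, and no constants need comparing.

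You instead use subadditivity of $u^{p-1}$ to get $k_p\ge(a+b)^{p-2}$. The power-mean step then gives the constant $2^{-(p-1)\gamma}$, and you check $2^{-(p-1)\gamma}\ge(p-1)2^{\gamma}$, i.e. $(p-1)2^{2-p}\le1$; your concavity argument for $\log f$ proves this correctly.

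The two routes are closely linked. At the intermediate stage the paper's chain gives $k_p\ge(p-1)\bigl(\tfrac{a+b}{2}\bigr)^{p-2}=(p-1)2^{2-p}(a+b)^{p-2}$. So your inequality $f(p)\le1$ says exactly that your intermediate bound $(a+b)^{p-2}$ is the stronger of the two. The paper's reduction is shorter and more structural, since it needs only the sign of $b^{p-2}-a^{p-2}$. Yours is self-contained and handles $p=2$ and the boundary cases $a=b$, $s=0$, $t=0$ directly, without the continuity arguments the paper uses.
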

	
	\begin{proof}
		The assertion is immediate for \(p=2\), so assume \(1<p<2\).
		First suppose that \(s\) and \(t\) have the same sign, and put
		\(a=|s|\), \(b=|t|\). By continuity, it is enough to consider
		\(a,b>0\). Then
		\[
		k_p(s,t)
		=(p-1)\int_0^1
		\bigl((1-\theta)a+\theta b\bigr)^{p-2}\,d\theta.
		\]
		Since \(u\mapsto u^{p-2}\) is convex,
		\[
		k_p(s,t)
		\ge
		(p-1)\left(\frac{a+b}{2}\right)^{p-2}.
		\]
		Moreover,
		\[
		\frac{a+b}{2}
		\le
		2^{-1/p}(a^p+b^p)^{1/p}.
		\]
		Because \(p-2\le0\), raising both sides to the power \(p-2\)
		reverses the inequality and proves
		\eqref{eq:scalar-divided-difference} in the same-sign case.
		
		Now suppose that \(s\) and \(t\) have opposite signs. Then
		\[
		k_p(s,t)=\frac{a^{p-1}+b^{p-1}}{a+b}.
		\]
		If \(a\ne b\),
		\[
			\frac{a^{p-1}+b^{p-1}}{a+b}
			-\frac{a^{p-1}-b^{p-1}}{a-b}=
			\frac{2ab\bigl(b^{p-2}-a^{p-2}\bigr)}
			{(a+b)(a-b)}
			\ge0.
		\]
		The same conclusion holds at \(a=b\) by continuity. Thus the
		opposite-sign divided difference dominates the same-sign one, and the
		first part of the proof applies.
	\end{proof}
	
	We next need a left--right interpolation estimate.
	\begin{lem}
		\label{lem:left-right-interpolation} 	Let \(1<p\le2\) and put
		$
		\gamma:=2/p-1.
		$
Let \((\mathcal A,\tau)\) be a finite von Neumann algebra, let
\(h\in\mathcal A\) be positive and invertible, and define
\(L_h(\xi)=h\xi\), \(R_h(\xi)=\xi h\), and
\(S=L_h+R_h\) on \(L_2(\mathcal A,\tau)\).
		Then
		\begin{equation}\label{eq:left-right-interpolation}
			\|S^{\gamma/2}\xi\|_p
			\le
			\bigl(2\tau(h)\bigr)^{\gamma/2}\|\xi\|_2,
			\qquad \xi\in L_2(\mathcal A,\tau).
		\end{equation}
		Consequently, for every
		\(b\in L_p(\mathcal A,\tau)\cap L_2(\mathcal A,\tau)\),
		\begin{equation}\label{eq:left-right-inverse}
			\left\langle b,S^{-\gamma}b\right\rangle_{L_2}
			\ge
			\bigl(2\tau(h)\bigr)^{-\gamma}\|b\|_p^2.
		\end{equation}
	\end{lem}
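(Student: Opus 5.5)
The plan is to prove \eqref{eq:left-right-interpolation} by complex interpolation between $L_2$ and $L_1$, and then to deduce \eqref{eq:left-right-inverse} by a direct substitution.

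\textbf{Setup.} Since $h$ is positive and invertible, $L_h$ and $R_h$ are commuting, bounded, positive operators on $L_2(\mathcal A,\tau)$. Hence $S=L_h+R_h$ is a bounded positive operator with bounded inverse, and $S^{z}$ is defined for every complex $z$ by functional calculus. The exponents fit exactly: with $\gamma=2/p-1\in[0,1)$ we have
\[
\frac1p=\frac{1-\gamma}{2}+\frac{\gamma}{1}.
\]
Because $\tau$ is finite, the complex interpolation space $[L_2(\mathcal A),L_1(\mathcal A)]_\gamma$ is $L_p(\mathcal A)$ isometrically (Pisier--Xu). So it suffices to bound the analytic family $T_z:=S^{z/2}$ at the two boundary lines.

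\textbf{Boundary estimates.} On $\operatorname{Re}z=0$, the operator $S^{it/2}$ is unitary on $L_2$, so $\|T_{it}\xi\|_2=\|\xi\|_2$.

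On $\operatorname{Re}z=1$, write $T_{1+it}=S^{1/2}S^{it/2}$. It is enough to show $\|S^{1/2}\eta\|_1\le(2\tau(h))^{1/2}\|\eta\|_2$, which I will do by duality. For $y\in\mathcal A$ with $\|y\|_\infty\le1$, the self-adjointness of $S^{1/2}$ on $L_2$ and Cauchy--Schwarz give
\[
|\tau(y^*S^{1/2}\eta)|\le\|S^{1/2}y\|_2\|\eta\|_2 .
\]
Moreover,
\[
\|S^{1/2}y\|_2^2=\langle y,Sy\rangle_{L_2}=\tau(y^*hy)+\tau(yhy^*)\le2\tau(h)\|y\|_\infty^2 ,
\]
using $yy^*\le1$ and $y^*y\le1$. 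Taking the supremum over such $y$ gives the $L_2\to L_1$ bound with constant $(2\tau(h))^{1/2}$.

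\textbf{Interpolation step.} I will run Stein's three-lines argument on the bilinear function $F(z)=\tau\bigl(y(z)^*S^{z/2}\xi(z)\bigr)$, where $\xi(z)$ and $y(z)$ are the standard analytic families built from finite-spectrum elements (as in the proof of noncommutative Hölder/Riesz--Thorin). Since $S$ and $S^{-1}$ are bounded, $\|S^{z/2}\|_{L_2\to L_2}$ is uniformly bounded on the strip. So $F$ is bounded and continuous on the closed strip and analytic inside, and there are no growth issues. At $\theta=\gamma$ this yields
\[
\|S^{\gamma/2}\xi\|_p\le1^{1-\gamma}\bigl(2\tau(h)\bigr)^{\gamma/2}\|\xi\|_2 .
\]
The main technical point is setting up the analytic families correctly in the noncommutative $L_p$ setting. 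I would cite the standard Pisier--Xu interpolation framework rather than redo it.

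\textbf{Consequence.} Let $b\in L_p\cap L_2$ and put $\xi:=S^{-\gamma/2}b\in L_2$, which is legitimate because $S^{-1}$ is bounded. Then $b=S^{\gamma/2}\xi$, so \eqref{eq:left-right-interpolation} gives $\|b\|_p\le(2\tau(h))^{\gamma/2}\|\xi\|_2$. Since $S$ is positive and self-adjoint,
\[
\|\xi\|_2^2=\langle S^{-\gamma/2}b,S^{-\gamma/2}b\rangle_{L_2}=\langle b,S^{-\gamma}b\rangle_{L_2},
\]
and this quantity is real. Squaring the previous inequality gives \eqref{eq:left-right-inverse}.
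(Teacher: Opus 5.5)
Your proposal is correct and follows the paper's proof step for step. It uses the same $L_2\to L_1$ endpoint bound, obtained by duality together with $\langle y,Sy\rangle_{L_2}\le 2\tau(h)\|y\|_\infty^2$. It then interpolates the same analytic family $S^{z/2}$ between $L_2$ and $L_1$ at $\theta=\gamma$, and finishes with the same substitution $\xi=S^{-\gamma/2}b$. The only difference is that you spell out the three-lines argument, including the uniform boundedness of $S^{z/2}$ that comes from the invertibility of $S$, where the paper simply cites standard complex interpolation.
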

	
	\begin{proof}
		We first prove
		\begin{equation}\label{eq:left-right-endpoint}
			\|S^{1/2}\xi\|_1
			\le
			\bigl(2\tau(h)\bigr)^{1/2}\|\xi\|_2.
		\end{equation}
		By \(L_1\)--\(L_\infty\) duality and the self-adjointness of
		\(S^{1/2}\) on \(L_2\),
		\[
		\begin{aligned}
			\|S^{1/2}\xi\|_1
			&=
			\sup_{\|u\|_\infty\le1}
			\left|\left\langle u,S^{1/2}\xi\right\rangle_{L_2}\right|\\
			&=
			\sup_{\|u\|_\infty\le1}
			\left|\left\langle S^{1/2}u,\xi\right\rangle_{L_2}\right|\\
			&\le
			\sup_{\|u\|_\infty\le1}
			\|S^{1/2}u\|_2\,\|\xi\|_2.
		\end{aligned}
		\]
		Furthermore,
		\[
			\|S^{1/2}u\|_2^2
			=\tau(u^*hu)+\tau(u^*uh)
			\le2\tau(h)\|u\|_\infty^2.
		\]
		This proves \eqref{eq:left-right-endpoint}.
		
		Consider the analytic family \(T_z=S^{z/2}\) on the strip
		\(0\le \Re z\le1\). Since \(S\) is positive and invertible on
		\(L_2(\mathcal A,\tau)\), \(T_{it}=S^{it/2}\) is unitary on \(L_2\).
		Moreover,
		\[
		T_{1+it}=S^{1/2}S^{it/2},
		\]
		and hence \eqref{eq:left-right-endpoint} gives
		\[
		\|T_{1+it}\xi\|_1
		\le \bigl(2\tau(h)\bigr)^{1/2}\|S^{it/2}\xi\|_2
		=\bigl(2\tau(h)\bigr)^{1/2}\|\xi\|_2.
		\]
		Since
		\[
		\frac1p=\frac{1-\gamma}{2}+\gamma,
		\qquad
		[L_2(\mathcal A,\tau),L_1(\mathcal A,\tau)]_\gamma
		=L_p(\mathcal A,\tau),
		\]
	standard	complex interpolation yields \eqref{eq:left-right-interpolation}.
		Applying it to
		\(\xi=S^{-\gamma/2}b\) and squaring gives
		\eqref{eq:left-right-inverse}.
	\end{proof}
	
	These two lemmas yield the following sharp tracial Hessian estimate.
	\begin{prop}\label{prop:derivative} 	Let \(1<p\le2\) and put
		$
		\gamma:=2/p-1.
		$
Let \((\mathcal A,\tau)\) be a finite von Neumann algebra. Let
\(A=A^*\in\mathcal A\) be invertible and let \(B=B^*\in\mathcal A\).
Set \(F(t):=\tau(|A+tB|^p)\) and
\(\Phi(t):=\|A+tB\|_p^2=F(t)^{2/p}\).
		At every \(t\) for which \(A+tB\) is invertible,
		\begin{align}
			F''(t)
			&\ge
			p(p-1)\|A+tB\|_p^{p-2}\|B\|_p^2,
			\label{eq:tracial-F-hessian}\\
			\Phi''(t)
			&\ge
			2(p-1)\|B\|_p^2.
			\label{eq:tracial-Phi-hessian}
		\end{align}
	\end{prop}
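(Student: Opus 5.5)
The plan is to compute \(F''\) by a second-order perturbation (Daleckii--Krein type) formula, and then to bound the resulting double operator integral from below in two stages: pointwise via Lemma~\ref{lem:scalar-divided-difference}, and then through the operator inequality of Lemma~\ref{lem:left-right-interpolation}.

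\emph{Step 1: the derivative formula.} Fix \(t\) with \(X:=A+tB\) invertible. Write \(\psi_p(s)=\sgn(s)|s|^{p-1}\) as in Lemma~\ref{lem:scalar-divided-difference}, so that \(|s|^p\) has derivative \(p\psi_p(s)\).
\begin{itemize}
\item Since \(X\) is invertible and self-adjoint, its spectrum lies in \([-\|X\|_\infty,-\delta]\cup[\delta,\|X\|_\infty]\) for some \(\delta>0\). On a neighbourhood of this set, \(s\mapsto |s|^p\) is real-analytic.
\item Holomorphic functional calculus therefore applies, and the trace's cyclicity gives the first derivative
\[
F'(t)=p\,\tau\bigl(\psi_p(X)B\bigr).
\]
\item Differentiating once more gives
\[
F''(t)=p\,\tau\bigl(B\,D\psi_p(X)[B]\bigr),
\]
where \(D\psi_p(X)[B]\) is the double operator integral of \(B\) with symbol the divided difference \(k_p(s,u)\) over \(\sigma(X)\times\sigma(X)\).
\item Let \(E\) be the spectral measure of \(X\). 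Then
\[
F''(t)=p\iint k_p(s,u)\,d\nu(s,u),
\qquad
\nu(\Delta_1\times\Delta_2):=\tau\bigl(BE(\Delta_1)BE(\Delta_2)\bigr).
\]
This \(\nu\) is a positive measure, since \(\tau(BE(\Delta_1)BE(\Delta_2))=\|E(\Delta_1)BE(\Delta_2)\|_2^2\).
\end{itemize}
Justifying this representation rigorously in a general finite von Neumann algebra is the main technical obstacle. The first thing I would try is to approximate the function by polynomials uniformly in \(C^2\) on the spectrum, for which the formula is algebraic.

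\emph{Step 2: pointwise lower bound on the symbol.} Because \(\nu\ge0\), Lemma~\ref{lem:scalar-divided-difference} may be applied pointwise:
\[
F''(t)\ge p(p-1)2^{\gamma}\iint\bigl(|s|^p+|u|^p\bigr)^{-\gamma}\,d\nu(s,u).
\]
Now set \(h:=|X|^p\), which is positive and invertible. The operators \(L_h\) and \(R_h\) are commuting positive operators on \(L_2(\mathcal A,\tau)\), with joint spectral measure \((\Delta_1,\Delta_2)\mapsto[\xi\mapsto E(\Delta_1)\xi E(\Delta_2)]\) pushed forward by \(s\mapsto|s|^p\). Hence the last integral is exactly
\[
\bigl\langle B,(L_h+R_h)^{-\gamma}B\bigr\rangle_{L_2}=\langle B,S^{-\gamma}B\rangle_{L_2}.
\]

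\emph{Step 3: apply the interpolation bound.} By \eqref{eq:left-right-inverse} with \(b=B\),
\[
\langle B,S^{-\gamma}B\rangle_{L_2}\ge(2\tau(h))^{-\gamma}\|B\|_p^2 .
\]
The two factors \(2^{\gamma}\) and \(2^{-\gamma}\) cancel, so
\[
F''(t)\ge p(p-1)\,\tau(|X|^p)^{-\gamma}\|B\|_p^2.
\]
Since \(\tau(|X|^p)^{-\gamma}=\|X\|_p^{-p\gamma}=\|X\|_p^{p-2}\), this is exactly \eqref{eq:tracial-F-hessian}.

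\emph{Step 4: pass from \(F\) to \(\Phi\).} Note \(F(t)>0\) because \(X\) is invertible and \(\tau\) is faithful. The chain rule for \(\Phi=F^{2/p}\) gives
\[
\Phi''=\tfrac2p F^{2/p-1}F''+\tfrac2p\bigl(\tfrac2p-1\bigr)F^{2/p-2}(F')^2 .
\]
The second term is nonnegative because \(p\le2\). Therefore
\[
\Phi''\ge\tfrac2p\|X\|_p^{2-p}F''\ge 2(p-1)\|B\|_p^2,
\]
which is \eqref{eq:tracial-Phi-hessian}.
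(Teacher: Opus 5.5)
Your proposal is correct and follows essentially the same route as the paper. It uses the divided-difference (double operator integral) formula for $F''$, then the pointwise bound of Lemma~\ref{lem:scalar-divided-difference} applied through the joint spectral calculus of $(L_X,R_X)$; your positive measure $\nu$ is exactly the spectral measure of this commuting pair at the vector $B$. It then applies Lemma~\ref{lem:left-right-interpolation} with $h=|X|^p$, followed by the chain rule for $F^{2/p}$.

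The only difference is in how the second-variation formula is justified. The paper uses the Cauchy-integral functional calculus together with a finite-spectrum approximation, whereas you propose approximating $|s|^p$ by polynomials in $C^2$ near the spectrum. Either route suffices.
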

	
	\begin{proof}
	It is enough to prove the assertions at \(t=0\). Since
	\(0\notin\sigma(A)\), choose disjoint open neighborhoods
	\(U_+\supset\sigma(A)\cap(0,\infty)\) and
	\(U_-\supset\sigma(A)\cap(-\infty,0)\). On \(U_+\), define
	\(\psi_p(z)=z^{p-1}\), and on \(U_-\), define
	\(\psi_p(z)=-(-z)^{p-1}\), using holomorphic branches on the two
	components. Thus \(\psi_p\) is holomorphic on
	\(U_+\cup U_-\), an open neighborhood of \(\sigma(A)\). The
	Cauchy-integral formula for Fr\'echet derivatives in the holomorphic
	functional calculus gives the standard divided-difference identity
		\[
		D\psi_p(A)[B]=k_p(L_A,R_A)B,
		\]
		where \(L_A\) and \(R_A\) are the commuting self-adjoint left and right
		multiplication operators on \(L_2(\mathcal A,\tau)\), and the right-hand
		side is defined by their joint functional calculus. Hence trace
		differentiation gives
		\begin{equation}\label{eq:trace-second-variation}
			F''(0)
			=p\left\langle B,k_p(L_A,R_A)B\right\rangle_{L_2}.
		\end{equation}
		When \(A=\sum_i\lambda_i e_i\) has finite spectrum,
		\eqref{eq:trace-second-variation} reads
		\[
		F''(0)
		=p\sum_{i,j}k_p(\lambda_i,\lambda_j)
		\tau(e_iBe_jB).
		\]
		For completeness, choose finite-spectrum self-adjoint operators
		\(A_n=f_n(A)\) such that \(\|A_n-A\|_\infty\to0\) and
	$
		\inf_n\operatorname{dist}(0,\sigma(A_n))>0.
	$
		The function \(k_p\) is continuous on a compact neighborhood of
		\(\sigma(A)\times\sigma(A)\); therefore, the joint functional calculus
		gives
		\[
		k_p(L_{A_n},R_{A_n})
		\longrightarrow k_p(L_A,R_A)
		\quad\text{in }B(L_2(\mathcal A,\tau)).
		\]
		Thus the finite-spectrum identity passes to the limit and yields
		\eqref{eq:trace-second-variation} for general invertible \(A\).
		
		By Lemma~\ref{lem:scalar-divided-difference} and the joint functional
		calculus,
		\[
		k_p(L_A,R_A)
		\ge
		(p-1)2^\gamma
		\bigl(L_{|A|^p}+R_{|A|^p}\bigr)^{-\gamma}.
		\]
		Apply Lemma~\ref{lem:left-right-interpolation} with \(h=|A|^p\).
		Since \(\tau(h)=\|A\|_p^p\),
		\[
		\begin{aligned}
			F''(0)
			&\ge
			p(p-1)2^\gamma
			\left\langle B,(L_h+R_h)^{-\gamma}B\right\rangle_{L_2}\\
			&\ge
			p(p-1)2^\gamma(2\tau(h))^{-\gamma}\|B\|_p^2\\
			&=
			p(p-1)\|A\|_p^{p-2}\|B\|_p^2.
		\end{aligned}
		\]
		This proves \eqref{eq:tracial-F-hessian}.
		
		Finally,
		\[
		\frac12\Phi''(0)
		=
		\frac1pF(0)^{2/p-1}F''(0)
		+\frac{2-p}{p^2}F(0)^{2/p-2}F'(0)^2.
		\]
		The second term is nonnegative. Since \(F(0)=\|A\|_p^p\),
		\eqref{eq:tracial-F-hessian} gives
		\[
		\frac12\Phi''(0)
		\ge(p-1)\|B\|_p^2,
		\]
		which is \eqref{eq:tracial-Phi-hessian}.
	\end{proof}
	
	We now obtain the finite tracial tangent inequality.
	\begin{prop}
		\label{prop:finite-tracial-tangent}
		Let \((\mathcal A,\tau)\) be a finite von Neumann algebra and let
		\(x,y\in L_p(\mathcal A,\tau)\). Then for $1<p\le 2$, 
		\begin{equation}\label{eq:finite-tracial-tangent}
			\|x+y\|_p^2
			\ge
			\|x\|_p^2
			+2\pair{J_p(x)}{y}
			+(p-1)\|y\|_p^2.
		\end{equation}
	\end{prop}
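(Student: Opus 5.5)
The plan is to integrate the Hessian bound of Proposition~\ref{prop:derivative} along the segment $t\mapsto x+ty$, after reducing to self-adjoint bounded elements with invertibility along the segment, and then to remove these restrictions by approximation.

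\emph{Reduction to self-adjoint elements.} Pass to the finite von Neumann algebra $M_2(\mathcal A)$ with trace $\tau\otimes\Tr$ and use the Hermitian dilation
\[
\tilde x=\begin{pmatrix}0&x\\x^*&0\end{pmatrix},\qquad \tilde y=\begin{pmatrix}0&y\\y^*&0\end{pmatrix}.
\]
Then $|\tilde x|=|x^*|\oplus|x|$, so $\|\tilde x\|_p^p=2\|x\|_p^p$ and hence $\|\tilde x\|_p^2=2^{2/p}\|x\|_p^2$. The polar part of $\tilde x$ is the dilation of $u$, so a direct computation gives
\[
\langle J_p(\tilde x),\tilde y\rangle = 2^{2/p-1}\cdot 2\,\|x\|_p^{2-p}\,\Ree\tau(|x|^{p-1}u^*y)\cdot\tfrac12 ,
\]
which equals $2^{2/p}\langle J_p(x),y\rangle$ after bookkeeping. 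Every term in \eqref{eq:finite-tracial-tangent} therefore scales by the same factor $2^{2/p}$, and it suffices to treat $x=A$ and $y=B$ self-adjoint. By density of $\mathcal A$ in $L_p$, together with continuity of the norm and of $J_p$ (norm-to-norm continuous on the uniformly smooth space $L_p$), we may also assume $A,B\in\mathcal A$.

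\emph{Perturbation to get invertibility.} Replace $A$ by $A_\varepsilon=A+\varepsilon B'$ for a suitable perturbation, for instance $A+\varepsilon\cdot 1$ in a generic sense. The aim is that $A_\varepsilon+tB$ is invertible for all but finitely many $t\in[0,1]$, or at least off a null set of $t$. Since $A$ and $B$ do not commute, this needs care. A cleaner alternative is to work directly with $\Phi$ on all of $[0,1]$: the function $\Phi(t)=\|A+tB\|_p^2$ is $C^1$ with $\Phi'(t)=2\langle J_p(A+tB),B\rangle$ by uniform smoothness.

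\emph{Convexity argument.} The key claim is that $\Phi(t)-(p-1)\|B\|_p^2t^2$ is convex on $[0,1]$. Granting this, the tangent line at $0$ lies below the function, which gives
\[
\Phi(1)\ge\Phi(0)+\Phi'(0)+(p-1)\|B\|_p^2,
\]
and this is exactly \eqref{eq:finite-tracial-tangent}. To prove the convexity claim, use the identity
\[
\Phi(t)-(p-1)\|B\|_p^2t^2=\lim_{\varepsilon\to0}\Bigl(\Phi_\varepsilon(t)-(p-1)\|B\|_p^2t^2\Bigr),
\]
where $\Phi_\varepsilon(t)=\bigl(\tau((|A+tB|^2+\varepsilon)^{p/2})\bigr)^{2/p}$ is a smooth regularization, or alternatively $\Phi_\varepsilon$ built from $A+tB+i\varepsilon$ in a complexified dilation. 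Convexity is preserved under pointwise limits, so it is enough to have a Hessian lower bound for the regularized functions.

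\emph{Main obstacle.} The hardest step is obtaining the Hessian bound at points where $A+tB$ is not invertible, or else avoiding such points. The first approach I would try is the following. The set of $t$ at which $A+tB$ is noninvertible may be large, so instead pick $c\in\mathbb R$ outside $\sigma(A)$ close to $0$ and argue via the function $t\mapsto\|A+c+tB\|_p^2$. This does not immediately avoid noninvertibility along the segment either. A better route is to restrict to finite-dimensional $\mathcal A$: there the analytic family $A+tB$ has eigenvalues that vanish only at finitely many $t$, unless the family is identically singular, in which case one compresses to the complement of the common kernel. On each of the finitely many subintervals, Proposition~\ref{prop:derivative} gives $\Phi''\ge2(p-1)\|B\|_p^2$. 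Because $\Phi$ is $C^1$, the convexity of $\Phi-(p-1)\|B\|_p^2t^2$ extends across the finitely many exceptional points. For general finite $\mathcal A$, approximate $A$ and $B$ by elements of finite-dimensional subalgebras, taking conditional expectations onto finite spectral subalgebras where possible, or else use $L_p$-density and pass to the limit using continuity of both sides. This last approximation step is where I expect the main technical difficulty.
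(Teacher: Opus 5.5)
There is a genuine gap. Your outline matches the paper's: Hermitian dilation, reduction to bounded elements, and convexity of $\Phi(t)-(p-1)\|B\|_p^2t^2$ obtained from regularizations. But the one step that needs an idea is left open, and the route you commit to for it fails.

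Proposition~\ref{prop:derivative} controls $\tau(|C+tD|^p)$ only at those $t$ where the self-adjoint element $C+tD$ is invertible. It says nothing directly about your $\Phi_\varepsilon(t)=\tau\bigl(((A+tB)^2+\varepsilon)^{p/2}\bigr)^{2/p}$. So ``it is enough to have a Hessian lower bound for the regularized functions'' restates the problem rather than solving it. Outside finite dimensions, $A+tB$ can be singular on a whole interval: in $L_\infty[0,1]$, $A=1$ and $B(s)=-2s$ give a singular element for every $t\in[\frac12,1]$. So no finite-exceptional-set argument is available.

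Your finite-dimensional fallback has two problems.
\begin{itemize}
\item An identically singular Hermitian pencil need not have a common kernel. With matrix units in $M_3(\mathbb C)$, take $A=E_{12}+E_{21}$ and $B=E_{23}+E_{32}$. Then $\det(A+tB)\equiv0$ and $\ker(A+tB)=\mathbb C\,(t,0,-1)^{T}$, yet $\ker A\cap\ker B=\{0\}$. This one is repairable by your scalar shift, since $\det(A+c+tB)$ is then a nonzero polynomial in $t$.
\item The fatal problem is the last step. A general finite von Neumann algebra is not approximately finite-dimensional. Two free semicircular elements generating $L(\mathbb F_2)$ cannot be simultaneously approximated by any finite-dimensional subalgebra. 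A trace inequality proved for matrices therefore does not transfer to arbitrary finite von Neumann algebras by density alone.
\end{itemize}

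The missing idea is the one you mention only in passing (``$A+tB+i\varepsilon$ in a complexified dilation'') and do not carry out. Put the regularization inside the dilation so that invertibility holds for every $t$. The paper takes, for $z_t=x+ty$,
\[
X_\varepsilon(t)=\begin{pmatrix}\varepsilon 1&z_t\\ z_t^*&-\varepsilon 1\end{pmatrix},
\qquad
X_\varepsilon(t)^2=\begin{pmatrix}z_tz_t^*+\varepsilon^2 1&0\\ 0&z_t^*z_t+\varepsilon^2 1\end{pmatrix}\ge\varepsilon^2 1 .
\]
The diagonal part $\varepsilon\,\mathrm{diag}(1,-1)$ anticommutes with every off-diagonal self-adjoint block, which is why the square is bounded below. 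Moreover, $X_\varepsilon(t)$ is affine in $t$ with direction $Y=\begin{pmatrix}0&y\\ y^*&0\end{pmatrix}$.

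Consequently:
\begin{itemize}
\item Proposition~\ref{prop:derivative} applies at every $t$ and gives $\frac{d^2}{dt^2}\|X_\varepsilon(t)\|_p^2\ge2(p-1)\|Y\|_p^2$.
\item The normalizations are $\|X_\varepsilon(t)\|_p^2=2^{2/p}\tau\bigl((|z_t|^2+\varepsilon^2 1)^{p/2}\bigr)^{2/p}$ and $\|Y\|_p^2=2^{2/p}\|y\|_p^2$.
\item Dividing by $2^{2/p}$ gives exactly the Hessian bound your regularized functions needed.
\end{itemize}
With this in place, your pointwise-limit convexity argument completes the proof. Since $\Phi$ is itself differentiable at $0$, it even spares you the paper's separate computation of $\lim_{\varepsilon\downarrow0}\phi_\varepsilon'(0)$.
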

	
	\begin{proof}
		Assume first that \(x,y\in\mathcal A\). On
		\(M_2(\mathcal A)\), equipped with
		\(\widetilde\tau=\tau\otimes\mathrm{Tr}_2\), define
		\[
		X_\varepsilon(t)
		:=
		\begin{pmatrix}
			\varepsilon1&x+ty\\
			(x+ty)^*&-\varepsilon1
		\end{pmatrix},
		\qquad
		Y:=
		\begin{pmatrix}
			0&y\\
			y^*&0
		\end{pmatrix}.
		\]
		For every \(\varepsilon>0\),
		\[
		X_\varepsilon(t)^2
		=
		\begin{pmatrix}
			(x+ty)(x+ty)^*+\varepsilon^2 1&0\\
			0&(x+ty)^*(x+ty)+\varepsilon^2 1
		\end{pmatrix}
		\ge\varepsilon^2 1.
		\]
		Thus Proposition~\ref{prop:derivative}, applied at every point of
		the line, shows that
		\[
		\phi_\varepsilon''(t)
		\ge2(p-1)\|Y\|_p^2,
		\qquad
		\phi_\varepsilon(t):=\|X_\varepsilon(t)\|_p^2.
		\]
		Integrating twice on \([0,1]\) yields
		\begin{equation}\label{eq:regularized-block-tangent}
			\phi_\varepsilon(1)
			\ge
			\phi_\varepsilon(0)+\phi_\varepsilon'(0)
			+(p-1)\|Y\|_p^2.
		\end{equation}
		
		Put
	$
		H_\varepsilon(z)
		:=\tau\bigl((z^*z+\varepsilon^2 1)^{p/2}\bigr).
		$
		The polar decomposition and traciality give
		\[
		\phi_\varepsilon(t)
		=2^{2/p}H_\varepsilon(x+ty)^{2/p},
		\qquad
		\|Y\|_p^2=2^{2/p}\|y\|_p^2.
		\]
		Trace differentiation gives
		\begin{equation}\label{eq:regularized-block-derivative}
				\phi_\varepsilon'(0)
				=2^{2/p+1}H_\varepsilon(x)^{2/p-1}\times
				\Ree\tau\bigl(
				(x^*x+\varepsilon^2 1)^{p/2-1}x^*y
				\bigr).
		\end{equation}
		If \(x=u|x|\), then
		\[
		u|x|(|x|^2+\varepsilon^2 1)^{p/2-1}
		\longrightarrow u|x|^{p-1}
		\quad\text{in }L_q,
		\qquad q=\frac{p}{p-1}.
		\]
		Indeed, the scalar factor on the left is bounded by \(|x|^{p-1}\),
		whose \(q\)-th power is \(|x|^p\). Hence dominated convergence in
		\(L_q\) applies. Consequently, \eqref{eq:regularized-block-derivative} yields
		\[
		\lim_{\varepsilon\downarrow0}\phi_\varepsilon'(0)
		=2^{2/p+1}\pair{J_p(x)}{y}.
		\]
		Also,
		\[
		\lim_{\varepsilon\downarrow0}\phi_\varepsilon(t)
		=2^{2/p}\|x+ty\|_p^2.
		\]
		Letting \(\varepsilon\downarrow0\) in
		\eqref{eq:regularized-block-tangent} and dividing by \(2^{2/p}\)
		proves \eqref{eq:finite-tracial-tangent} for bounded \(x,y\).
		
		For general \(x,y\in L_p(\mathcal A,\tau)\), approximate them in
		\(L_p\) by bounded elements. The norms are continuous and the normalized
		duality map \(J_p:L_p\to L_q\) is norm-to-norm continuous, so
		\eqref{eq:finite-tracial-tangent} passes to the limit.
	\end{proof}
	
	We next pass from finite tracial spaces to arbitrary Haagerup \(L_p\)-spaces.
		\begin{lem}
		\label{lem:haagerup-transfer}
Let $1<p\le 2$.		If \eqref{eq:finite-tracial-tangent} holds in every finite tracial
		noncommutative \(L_p\)-space, then it holds in \(L_p(\M)\) for every
		von Neumann algebra \(\M\).
	\end{lem}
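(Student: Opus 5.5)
We reduce from an arbitrary Haagerup space $L_p(\M)$ to finite tracial ones using the Haagerup reduction method of \cite{HJX10}. We proceed in three steps, from the $\sigma$-finite case to the general weight. Throughout, the only facts about the inequality \eqref{eq:finite-tracial-tangent} that we use are its stability under isometries preserving the duality pairing and its stability under norm limits.

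\emph{Step 1: the $\sigma$-finite case.} Suppose first that $\varphi$ is a faithful normal state. Haagerup's construction gives a von Neumann algebra $\mathcal R=\M\rtimes_{\sigma^\varphi}G$, with $G=\bigcup_n 2^{-n}\mathbb Z$. It also gives an increasing sequence of finite von Neumann subalgebras $\mathcal R_n\subset\mathcal R$, with faithful normal finite traces $\tau_n$, and normal faithful conditional expectations $\E_n:\mathcal R\to\mathcal R_n$ such that $\bigcup_n\mathcal R_n$ is $\sigma$-weakly dense in $\mathcal R$. Moreover, $L_p(\M)$ embeds isometrically and complementedly into $L_p(\mathcal R)$, and $L_p(\mathcal R_n)$ is identified isometrically with $L_p(\mathcal R_n,\tau_n)$. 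Finally, $\bigcup_n L_p(\mathcal R_n)$ is norm dense in $L_p(\mathcal R)$ for $1\le p<\infty$, with $\E_n^{(p)}x\to x$.

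\emph{Step 2: check that all embeddings respect the pairing and $J_p$.} The embeddings $L_p(\M)\hookrightarrow L_p(\mathcal R)$ and $L_p(\mathcal R_n,\tau_n)\cong L_p(\mathcal R_n)\hookrightarrow L_p(\mathcal R)$ come from trace-preserving (Haagerup-trace-compatible) inclusions. They therefore preserve polar decompositions, the functional calculus $|x|^{p-1}$, and the pairing $\Tr(z^*y)$. In particular $J_p(\iota x)=\iota J_p(x)$ and $\langle J_p(\iota x),\iota y\rangle=\langle J_p(x),y\rangle$. Since the Haagerup $L_p$ spaces associated with different faithful weights are canonically isometric by $*$-isomorphisms compatible with the $\Tr$-pairing, and $\E_n$ is trace-compatible, these identities transfer to each $\mathcal R_n$.

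\emph{Step 3: approximation and the general case.} The identity $J_p(\iota x)=\iota J_p(x)$ is enough: it is unnecessary to know that $J_p$ commutes with $\E_n$. For $x,y\in L_p(\mathcal R)$, set $x_n=\E_n x$ and $y_n=\E_n y$ in $L_p(\mathcal R_n)$. Applying the hypothesis in $L_p(\mathcal R_n,\tau_n)$, and using Step 2, gives \eqref{eq:finite-tracial-tangent} for $(x_n,y_n)$ inside $L_p(\mathcal R)$. Letting $n\to\infty$, norm continuity of $\|\cdot\|_p$ and of $J_p:L_p\to L_q$ (uniform smoothness) yields the inequality for $x,y\in L_p(\mathcal R)$. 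Restricting to the image of $L_p(\M)$ gives it on $L_p(\M)$. For a general semifinite weight, fix $x,y$. Their supports, together with those of $J_p(x)$, live in a corner $e\M e$ with $e$ a $\sigma$-finite projection for which $\varphi|_{e\M e}$ is $\sigma$-finite. Indeed, $L_p(\M)=\overline{\bigcup_e eL_p(\M)e}$ over an increasing net of such projections, and $L_p(e\M e)\cong eL_p(\M)e$ isometrically and compatibly with the pairing. By density and continuity again, the inequality extends to all of $L_p(\M)$. Alternatively one can use the $2\times2$ trick $L_p(\M)\ni x\mapsto\begin{pmatrix}0&x\\0&0\end{pmatrix}$, which reduces to the case $x,y\in eL_p(\M)f$.

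The main obstacle is Step 2: one must verify carefully that the identification $L_p(\mathcal R_n)\cong L_p(\mathcal R_n,\tau_n)$ is a Kosaki/Haagerup isomorphism implemented by a density $D_n$ with $\Tr$ compatible with $\tau_n$, so that $u|x|^{p-1}$ is mapped correctly. The identification is of the form $x\mapsto D_n^{1/2p}xD_n^{1/2p}$, with $D_n$ central in $\mathcal R_n$, and I would check compatibility using exactly this centrality.
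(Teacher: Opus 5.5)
Your skeleton matches the paper's, and it is correct. You reduce to a $\sigma$-finite corner $e\M e$ using the supports of $x,y$ and \cite[Remark~3.2]{HJX10}. You then embed into Haagerup's $\mathcal R=\M\rtimes_{\sigma^\varphi}G$, with increasing finite tracial pieces $\mathcal R_n$ whose union is dense. Finally you apply the hypothesis there and pass to the limit using norm continuity of $J_p$ on $L_p(\mathcal R)$.

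The genuine difference is your Step~2, where you plan to check by hand that every identification preserves adjoints, polar decompositions, $|x|^{p-1}$ and $\Tr$, so that $J_p(\iota x)=\iota J_p(x)$. The paper never needs this. It rewrites the tangent term as $2D\Psi(x)[y]$ with $\Psi=\frac12\|\cdot\|^2$, a quantity determined by the norm alone. Hence $D\Psi(Tx)[Ty]=D\Psi(x)[y]$ holds automatically for any linear isometry $T$, and for any subspace carrying the restricted norm. The only input then needed from \cite{HJX10} is ``isometric copies of finite tracial $L_p$-spaces with dense union.'' Your route does give the more concrete fact that $J_p$ commutes with the embeddings as maps into $L_q$, but it costs a nontrivial crossed-product verification.

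Your sketch of that verification is also off. Suppose $D_n$ denotes the density of $\hat\varphi|_{\mathcal R_n}$ with respect to $\tau_n$. Then $\sigma^{\hat\varphi}_t=\operatorname{Ad}D_n^{it}$ on $\mathcal R_n$. If $D_n$ were central for all $n$, $\sigma^{\hat\varphi}$ would be trivial on the $w^*$-dense union $\bigcup_n\mathcal R_n$, and $\varphi$ would be tracial. So $D_n$ is not central in general. Moreover, $x\mapsto D_n^{1/2p}xD_n^{1/2p}$ is then only the symmetric Kosaki embedding of $\mathcal R_n$ into $L_p$, not an identification of $L_p$-spaces preserving polar decompositions.

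What does commute with $\mathcal R_n$ is the Haagerup density $h_{\tau_n}$ of the trace itself, since $\sigma^{\tau_n}$ is trivial. It gives the structure-preserving isometry $x\mapsto xh_{\tau_n}^{1/p}$. Alternatively, you can simply invoke the weight-independence $*$-isomorphism of Haagerup spaces.

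Your closing density argument over a net of corners is superfluous, since $x=exe$ and $y=eye$ already.
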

	
	\begin{proof}
		Fix \(x,y\in L_p(\M)\). The left and right support projections of each
		element of \(L_p(\M)\) are \(\sigma\)-finite. A finite join of
		\(\sigma\)-finite projections is again \(\sigma\)-finite. Hence the join
		of the four support projections associated with \(x\) and \(y\) is a
		\(\sigma\)-finite projection \(e\), and \(x,y\in L_p(e\M e)\).
		By \cite[Remark~3.2]{HJX10}, under the canonical isometric identification
		\(L_p(e\M e)=eL_p(\M)e\), both the norm and the duality pairing in
		\eqref{eq:main} are unchanged. Thus it is enough to consider the
		\(\sigma\)-finite case. By \cite[Theorem~3.1]{HJX10}, there exist
		\(X_p=L_p(\mathcal R)\), an isometric linear embedding
		\(\iota:L_p(\M)\to X_p\), and an increasing sequence
		\((X_{p,n})_{n\ge1}\) of isometric copies of finite tracial
		\(L_p\)-spaces whose union is dense in \(X_p\).
		
		Choose \(\widetilde x_n,\widetilde y_n\in\bigcup_m X_{p,m}\) such that
		\[
		\widetilde x_n\longrightarrow\iota(x),
		\qquad
		\widetilde y_n\longrightarrow\iota(y)
		\quad\text{in }X_p.
		\]
		Since the sequence is increasing, for each \(n\) we may choose a single
		index \(m_n\) such that
		\(\widetilde x_n,\widetilde y_n\in X_{p,m_n}\).
		Put \(x_n=\widetilde x_n\) and \(y_n=\widetilde y_n\).
		For \(\Psi(\xi)=\frac12\|\xi\|_{X_p}^2\), the finite tracial estimate
		on \(X_{p,m_n}\) can be written intrinsically as
		\[
		\|x_n+y_n\|_{X_p}^2
		\ge
		\|x_n\|_{X_p}^2
		+2D\Psi(x_n)[y_n]
		+(p-1)\|y_n\|_{X_p}^2.
		\]
		Indeed, the norm on \(X_{p,m_n}\) is the restriction of the norm on
		\(X_p\), so the derivative of \(\Psi\) restricted to \(X_{p,m_n}\)
		is exactly the normalized duality functional of that finite tracial
		\(L_p\)-space. Since \(1<p<\infty\), the normalized duality map of
		\(X_p\) is norm-to-norm continuous. Passing to the limit gives the same
		inequality for \(\iota(x),\iota(y)\). Finally, because \(\iota\) is a
		linear isometry, the restriction of \(D\Psi(\iota(x))\) to
		\(\iota(L_p(\M))\) is the normalized duality functional at \(x\); hence
		\[
		D\Psi(\iota(x))[\iota(y)]
		=\pair{J_p(x)}{y}.
		\]
		This proves the assertion.
	\end{proof}
	
	\begin{proof}[Proof of Theorem~\ref{thm:main1}]
		For \(1<p\le2\), Proposition~\ref{prop:finite-tracial-tangent} and
		Lemma~\ref{lem:haagerup-transfer} give \eqref{eq:main} directly.
		
		Now let \(2\le p<\infty\), and put \(q=p/(p-1)\). Then
		\(1<q\le2\) and \(q-1=1/(p-1)\). Set
	$
		\Phi_r(z):=\frac12\|z\|_r^2.
	$
		The lower estimate already proved in \(L_q(\M)\) says
		\begin{equation}\label{eq:q-strong-convexity}
			\Phi_q(z+w)
			\ge
			\Phi_q(z)+\pair{J_q(z)}{w}
			+\frac{q-1}{2}\|w\|_q^2.
		\end{equation}
		Fix \(x,y\in L_p(\M)\) and put \(z=J_p(x)\). A direct calculation
		from the polar decomposition gives
		\[
		\|z\|_q=\|x\|_p,
		\qquad
		J_q(z)=x.
		\]
		For \(\eta=z+w\), \eqref{eq:q-strong-convexity} gives
		\[
		\Phi_q(\eta)
		\ge
		\Phi_q(z)+\pair{x}{w}
		+\frac{q-1}{2}\|w\|_q^2.
		\]
		The elementary dual identity
		\[
		\Phi_p(v)
		=
		\sup_{\eta\in L_q(\M)}
		\bigl\{\pair{\eta}{v}-\Phi_q(\eta)\bigr\}
		\]
		follows from H\"older's inequality and scalar optimization, with
		equality at \(\eta=J_p(v)\). Therefore,
		\[
		\begin{aligned}
			\Phi_p(x+y)
			&\le
			\pair{z}{x+y}-\Phi_q(z)
			+\sup_{w\in L_q(\M)}
			\left\{
			\pair{w}{y}-\frac{q-1}{2}\|w\|_q^2
			\right\}\\
			&=
			\Phi_p(x)+\pair{J_p(x)}{y}
			+\frac{1}{2(q-1)}\|y\|_p^2\\
			&=
			\Phi_p(x)+\pair{J_p(x)}{y}
			+\frac{p-1}{2}\|y\|_p^2.
		\end{aligned}
		\]
		Multiplying by \(2\) proves the reverse of \eqref{eq:main}.
		
		It remains to prove optimality. Take
		\(\M=\mathbb C^2\) with normalized trace
		\(\tau(a,b)=(a+b)/2\), and let
$
		x=(1,1)$ and
	$	h=(1,-1).$
		Then
		\[
		\|x\|_p=\|h\|_p=1,
		\qquad
		\pair{J_p(x)}{h}=0,
		\]
		and, as \(t\to0\),
		\[
			\|x+th\|_p^2
			=
			\left(
			\frac{(1+t)^p+(1-t)^p}{2}
			\right)^{2/p}\\
			=1+(p-1)t^2+O(t^4).
		\]
		Thus any uniform coefficient in the lower estimate must be at most
		\(p-1\), whereas any uniform coefficient in the upper estimate must be
		at least \(p-1\). Hence \(p-1\) is optimal in both ranges.
	\end{proof}
	
\section{An equivalence theorem and the Bregman formulation}
\label{sec:equivalence-bregman}

In this section, we establish an equivalence theorem showing that the
convexity inequality and its tangent formulation are equivalent in a
precise sense. We also introduce the associated Bregman divergence and
use it to give a clean equivalent reformulation of
Theorem~\ref{thm:main1}.

	We recall the relevant differentiability notion. Let \(X\) be a
	real Banach space and let \(F:X\to\mathbb R\). We say that \(F\) is
	\emph{G\^ateaux differentiable} at \(x\in X\) if there exists a
	continuous linear functional \(DF(x)\in X^*\) such that
	\begin{equation*}
		DF(x)[h]
		=
		\lim_{t\to0}
		\frac{F(x+th)-F(x)}{t},
		\qquad h\in X.
	\end{equation*}
	Thus, G\^ateaux differentiability amounts to differentiability along
	every affine line through \(x\), with the resulting directional
	derivatives depending continuously and linearly on the direction.
	
	Recall that Fr\'echet differentiability is stronger: \(F\) is
	\emph{Fr\'echet differentiable} at \(x\) if there exists \(DF(x)\in X^*\)
	such that
	\[
	F(x+h)
	=
	F(x)+DF(x)[h]+o(\|h\|)
	\qquad\text{as }h\to0.
	\]
	In particular, Fr\'echet differentiability implies G\^ateaux
	differentiability, whereas the converse does not hold in general.
	For the applications below, the squared norm on \(L_p(\mathcal M)\)
	is in fact Fr\'echet differentiable because \(L_p(\mathcal M)\) is
	uniformly smooth. 
	
	The following equivalence theorem requires only G\^ateaux differentiability and will play a central role in the arguments that follow.
	
\begin{thm}\label{thm:abstract-equivalence}
Let \(X\) be a real Banach space such that the function
\(\Phi:X\to\mathbb R\), defined by
\(\Phi(x):=\frac12\|x\|^2\), is G\^ateaux differentiable. Let \(c\geq0\). Then
	\begin{equation}\label{eq:abstract-mid-lower}
		\|x+y\|^2+\|x-y\|^2
		\geq
		2\|x\|^2+2c\|y\|^2,
		\qquad x,y\in X,
	\end{equation}
	if and only if
	\begin{equation}\label{eq:abstract-tangent-lower}
		\|x+y\|^2
		\geq
		\|x\|^2+2D\Phi(x)[y]+c\|y\|^2,
		\qquad x,y\in X.
	\end{equation}
	The analogous equivalence holds when both inequalities are reversed.
	Moreover, the two formulations have the same admissible constants
	\(c\), and hence the same optimal coefficient.
\end{thm}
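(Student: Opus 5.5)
The plan is to prove the two implications separately, using the same constant \(c\) in each, so that the sets of admissible constants coincide. The reversed case is handled by the same argument with every inequality flipped.

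\emph{Tangent \(\Rightarrow\) midpoint.} Assume \eqref{eq:abstract-tangent-lower} and apply it to the pairs \((x,y)\) and \((x,-y)\). Since \(D\Phi(x)\in X^*\) is linear, \(D\Phi(x)[-y]=-D\Phi(x)[y]\). Hence adding the two inequalities cancels the first-order terms and gives
\[
\|x+y\|^2+\|x-y\|^2\ge 2\|x\|^2+2c\|y\|^2,
\]
which is \eqref{eq:abstract-mid-lower}. The reversed version follows in the same way.

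\emph{Midpoint \(\Rightarrow\) tangent.} This is the substantive direction. Fix \(x,y\in X\) and define the one-variable function
\[
g(t):=\|x+ty\|^2-c\,t^2\|y\|^2,\qquad t\in\mathbb R .
\]
For \(t,s\in\mathbb R\), apply \eqref{eq:abstract-mid-lower} at the base point \(x+ty\) with increment \(sy\):
\[
\|x+(t+s)y\|^2+\|x+(t-s)y\|^2\ge 2\|x+ty\|^2+2cs^2\|y\|^2 .
\]
Since \(c(t+s)^2+c(t-s)^2=2ct^2+2cs^2\), this is exactly \(g(t+s)+g(t-s)\ge 2g(t)\). Writing \(a=t-s\) and \(b=t+s\), this says \(g\) is midpoint convex on \(\mathbb R\).

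Because the norm is continuous, \(g\) is continuous. A continuous midpoint-convex function on \(\mathbb R\) is convex, by the standard dyadic-rationals argument followed by density.

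Next, G\^ateaux differentiability of \(\Phi\) at \(x\) gives that \(g\) is differentiable at \(0\), with
\[
g'(0)=2\lim_{t\to0}\frac{\Phi(x+ty)-\Phi(x)}{t}=2D\Phi(x)[y];
\]
the term \(-ct^2\|y\|^2\) contributes nothing to the derivative at \(0\). Convexity then yields the supporting-line inequality \(g(1)\ge g(0)+g'(0)\), that is,
\[
\|x+y\|^2-c\|y\|^2\ge\|x\|^2+2D\Phi(x)[y],
\]
which is \eqref{eq:abstract-tangent-lower}.

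In the reversed case, \(g\) is midpoint concave and continuous, hence concave, and we get \(g(1)\le g(0)+g'(0)\) instead.

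The only step needing care is the passage from the two-point symmetric inequality to genuine convexity of \(g\). This requires both the reparametrization along the line through \(x\) in direction \(y\) and the continuity of \(g\). Once those are in place, the remaining steps are immediate. Since both implications keep the same \(c\), the admissible constants, and hence the optimal coefficient, coincide.
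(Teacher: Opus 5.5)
Your proposal is correct and is essentially the paper's proof. The tangent-to-midpoint direction is symmetrization, and the converse uses midpoint convexity plus continuity of the restriction to the line, followed by the supporting-line inequality at \(0\); the only cosmetic difference is that your \(g\) is twice the paper's \(g(t)=\Phi(x+ty)-\frac{c}{2}t^2\|y\|^2\).
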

	
	\begin{proof}
		Assume first that \eqref{eq:abstract-mid-lower} holds. Fix
		\(x,y\in X\), and define
		\begin{equation*}
			g(t)
			:=
			\Phi(x+ty)-\frac{c}{2}t^2\|y\|^2,
			\qquad t\in\mathbb R.
		\end{equation*}
		For \(s,t\in\mathbb R\), apply
		\eqref{eq:abstract-mid-lower} with \(x+ty\) in place of \(x\)
		and \(sy\) in place of \(y\). After dividing by \(2\), one obtains
		\[
		\Phi(x+(t+s)y)+\Phi(x+(t-s)y)
		\geq
		2\Phi(x+ty)+c s^2\|y\|^2.
		\]
		Using
		$
		(t+s)^2+(t-s)^2=2t^2+2s^2,
		$
		this is equivalent to
		\[
		g(t+s)+g(t-s)\geq2g(t).
		\]
		Thus \(g\) is midpoint convex. Since \(g\) is continuous, it is
		convex.
		
		By the G\^ateaux differentiability of \(\Phi\), the scalar function
		\(g\) is differentiable at \(0\), with
		$
		g'(0)=D\Phi(x)[y].
		$
		The supporting-line inequality for the convex function \(g\)
		therefore gives
		\[
		g(1)\geq g(0)+g'(0).
		\]
		Substituting the definition of \(g\), we obtain
		\[
		\Phi(x+y)-\frac{c}{2}\|y\|^2
		\geq
		\Phi(x)+D\Phi(x)[y].
		\]
		Multiplying by \(2\) yields
		\eqref{eq:abstract-tangent-lower}.
		
		Conversely, apply \eqref{eq:abstract-tangent-lower} to
		\((x,y)\) and \((x,-y)\). Since \(D\Phi(x)\in X^*\) is linear,
		\[
		D\Phi(x)[-y]=-D\Phi(x)[y].
		\]
		Adding the two resulting inequalities therefore cancels the
		first-order terms and gives \eqref{eq:abstract-mid-lower}.
		
		If both inequalities are reversed, the same argument shows that
		\(g\) is midpoint concave and hence concave. Its supporting-line
		inequality is then reversed, while the converse again follows by
		symmetrization.
		
		Since the equivalence holds for every fixed \(c\geq0\), the sets of
		admissible constants in the midpoint and tangent formulations coincide.
		Consequently, the optimal coefficients in the two formulations are the
		same.
	\end{proof}
As an immediate consequence of
Theorem~\ref{thm:abstract-equivalence}, we have the following.

\begin{cor}\label{cor:main-rx-equivalence}
	For \(1<p\le2\), inequality~\eqref{eq:main} is equivalent to
	inequality~\eqref{eq:RX}. For \(2\le p<\infty\), the corresponding
	reversed inequalities are equivalent.
\end{cor}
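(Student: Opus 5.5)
The corollary follows by specializing Theorem~\ref{thm:abstract-equivalence} to \(X=L_p(\M)\), viewed as a real Banach space, with \(c=p-1\). The plan has three short steps.

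\emph{Step 1: differentiability.} We must check that \(\Phi(x)=\frac12\|x\|_p^2\) is G\^ateaux differentiable on \(L_p(\M)\) for \(1<p<\infty\), so that the theorem applies. This holds because \(L_p(\M)\) is uniformly smooth by \cite[Theorem~5.1 and Corollary~5.2]{PX03}. Hence \(\Phi\) is Fr\'echet, and in particular G\^ateaux, differentiable, with derivative the normalized duality functional.

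\emph{Step 2: identifying the derivative.} We need \(D\Phi(x)[y]=\pair{J_p(x)}{y}\), with \(J_p\) given by the explicit formula in the introduction.
\begin{itemize}
\item Existence and uniqueness: the functional \(D\Phi(x)\) satisfies \(D\Phi(x)[x]=\|x\|_p^2\) and \(\|D\Phi(x)\|=\|x\|_p\). By smoothness it is the unique norming functional.
\item Matching the formula: the introduction identifies that unique functional with \(\|x\|_p^{2-p}u|x|^{p-1}\) under the pairing \(\Ree\Tr(z^*y)\). The verification is that \(\Tr\bigl((u|x|^{p-1})^*u|x|\bigr)=\Tr|x|^p\) and \(\|u|x|^{p-1}\|_q=\|x\|_p^{p-1}\). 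By uniqueness, this fixes \(D\Phi(x)=J_p(x)\).
\item The case \(x=0\) is trivial, since both sides vanish.
\end{itemize}
With this identification, inequality \eqref{eq:abstract-tangent-lower} with \(c=p-1\) is exactly \eqref{eq:main}, and inequality \eqref{eq:abstract-mid-lower} is exactly \eqref{eq:RX}.

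\emph{Step 3: conclusion.} Theorem~\ref{thm:abstract-equivalence} then gives the equivalence for \(1<p\le2\) directly. For \(2\le p<\infty\), we invoke the reversed version of the same theorem with the same \(c=p-1\ge0\).

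The only point that needs care, rather than bookkeeping, is Step 2: the real pairing \(\Ree\Tr\) must represent the real dual of \(L_p(\M)\) correctly, so that the Fr\'echet derivative is the stated \(J_p(x)\) and not merely some functional. This follows from the standard duality \(L_p(\M)^*=L_q(\M)\) for Haagerup spaces, together with the uniqueness argument above.
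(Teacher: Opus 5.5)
Your proposal is correct and follows the paper's route: the paper presents the corollary as an immediate consequence of Theorem~\ref{thm:abstract-equivalence} with $X=L_p(\M)$ and $c=p-1$, using the Fr\'echet differentiability of $\Phi_p$ (from uniform smoothness) and the identification $D\Phi_p(x)[y]=\pair{J_p(x)}{y}$ recorded in \eqref{eq:derivative}. Your Step~2 just spells out the norming-functional verification that the paper asserts without detail.
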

	
		Let \(x=u|x|\in L_p(\mathcal M)\) be the polar decomposition of
	\(x\). Recall the normalized duality map
	$
	J_p:L_p(\mathcal M)\longrightarrow L_q(\mathcal M)
	$
	is given by
	\begin{equation*}
		J_p(x)
		=
		\begin{cases}
			\|x\|_p^{\,2-p}u|x|^{p-1},
			& x\neq0,\\[2mm]
			0,
			& x=0.
		\end{cases}
	\end{equation*}
	That is,
	for every \(x\in L_p(\mathcal M)\),
	$
	\|J_p(x)\|_q
	=
	\|x\|_p
	$ and $
	\pair{J_p(x)}{x}
	=
	\|x\|_p^2.
	$

	Following Bregman~\cite{Bre67}, we use the normalized duality map to
	define the Bregman divergence associated with the squared \(L_p\)-norm. Set
	\begin{equation}\label{eq:Phi-p}
		\Phi_p:L_p(\mathcal M)\longrightarrow\mathbb R,
		\qquad
		\Phi_p(x):=\frac12\|x\|_p^2.
	\end{equation}
	Regarded as a function on the underlying real Banach space of
	\(L_p(\mathcal M)\), the functional \(\Phi_p\) is Fr\'echet
	differentiable, and
	\begin{equation}\label{eq:derivative}
		D\Phi_p(a)[h]
		=
		\pair{J_p(a)}{h},
		\qquad
		a,h\in L_p(\mathcal M).
	\end{equation}
	By \eqref{eq:derivative}, the first-order affine approximation of \(\Phi_p\) at
	\(a\in L_p(\mathcal M)\) is
	\[
	x\longmapsto
	\Phi_p(a)+\pair{J_p(a)}{x-a}.
	\]
	The difference between \(\Phi_p(x)\) and this affine approximation
	defines the corresponding Bregman divergence.
	
	\begin{defin}\label{def:bregman}
		Let \(1<p<\infty\). For \(x,a\in L_p(\mathcal M)\), define
		the Bregman divergence generated by \(\Phi_p\) in \eqref{eq:Phi-p} by
		\begin{equation}\label{eq:bregman}
			\begin{aligned}
				\mathcal D_p(x\mid a)
				&:=
				\Phi_p(x)-\Phi_p(a)-D\Phi_p(a)[x-a]\\
				&=
				\frac12\|x\|_p^2
				-\frac12\|a\|_p^2
				-\pair{J_p(a)}{x-a}.
			\end{aligned}
		\end{equation}
	\end{defin}
	
	Since \(\Phi_p\) is strictly convex, one has
	$
	\mathcal D_p(x\mid a)\geq0,
	$
	with equality if and only if \(x=a\). In general,
	\(\mathcal D_p\) is not symmetric; namely,
	$
	\mathcal D_p(x\mid a)
	\neq
	\mathcal D_p(a\mid x).
	$
	The order of the variables is therefore essential: the second
	variable \(a\) is the base point at which the tangent functional
	\(J_p(a)\) is evaluated.

	In terms of the Bregman divergence introduced in Definition~\ref{def:bregman}, Theorem~\ref{thm:main1} has the following clean form.
	
	\begin{cor}\label{cor:bregman-two-point}
		Let $x,a\in L_p(\M)$. Then for $1<p\le 2$,
		\begin{equation*}
			\mathcal D_p(x\mid a)
			\ge \frac{p-1}{2}\|x-a\|_p^2.
		\end{equation*}
		For $2\le p<\infty$, the inequality is reversed.
	\end{cor}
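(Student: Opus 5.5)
This is a direct rewriting of Theorem~\ref{thm:main1} in Bregman notation, so the plan is to substitute and simplify. Fix \(x,a\in L_p(\M)\) and apply \eqref{eq:main} with base point \(a\) and increment \(y:=x-a\), so that \(a+y=x\). For \(1<p\le2\), Theorem~\ref{thm:main1} gives
\[
\|x\|_p^2\ge\|a\|_p^2+2\pair{J_p(a)}{x-a}+(p-1)\|x-a\|_p^2 .
\]
Next we move the first two terms on the right to the left-hand side and divide by \(2\). The left-hand side then becomes
\[
\tfrac12\|x\|_p^2-\tfrac12\|a\|_p^2-\pair{J_p(a)}{x-a},
\]
which is exactly \(\mathcal D_p(x\mid a)\) by Definition~\ref{def:bregman}. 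Here we use \eqref{eq:derivative} to identify \(D\Phi_p(a)[x-a]\) with \(\pair{J_p(a)}{x-a}\), and this includes the case \(a=0\), where \(J_p(0)=0\). The right-hand side becomes \(\frac{p-1}{2}\|x-a\|_p^2\), which is the claimed bound.

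For \(2\le p<\infty\) the argument is identical, except that we start from the reversed form of \eqref{eq:main} supplied by Theorem~\ref{thm:main1}. Dividing by the positive number \(2\) preserves the direction, so every step goes through with the inequality reversed, and we obtain the reversed Bregman estimate.

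There is no real obstacle here; the statement is a change of variables. The only point that needs care is that the pairing in \eqref{eq:main} is the real pairing \(\Ree\Tr(J_p(a)^*y)\), and that this coincides with the derivative used in defining \(\mathcal D_p\), which is precisely \eqref{eq:derivative}. Note also that the correspondence \(y\leftrightarrow x-a\) is a bijection of \(L_p(\M)\) for fixed \(a\). Hence the Bregman form is in fact equivalent to \eqref{eq:main}, and the sharpness of the constant \(\frac{p-1}{2}\) is inherited from the sharpness of \(p-1\) in Theorem~\ref{thm:main1}.
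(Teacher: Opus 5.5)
Your proposal is correct and is the same as the paper's proof: apply Theorem~\ref{thm:main1} at base point \(a\) with increment \(x-a\), divide by \(2\), and use Definition~\ref{def:bregman} to recognize \(\mathcal D_p(x\mid a)\). Your remarks on the real pairing and on inherited sharpness are accurate, though the statement itself does not need them.
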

	\begin{proof}
		Apply Theorem~\ref{thm:main1} with base point \(a\) and increment
		\(x-a\), and divide by \(2\).
	\end{proof}
\section{Conditional expectations and Bregman decomposition}
\label{sec:conditional-expectations}
In this section, we develop a conditional-expectation refinement of
Theorem~\ref{thm:main1}. We first establish the relevant duality
orthogonality and derive an exact Bregman--Pythagorean decomposition
with respect to a conditional expectation. Combining this decomposition
with the two-point Bregman form of Theorem~\ref{thm:main1} and the
Ricard--Xu conditional-expectation inequality \cite[Theorem~1]{RX16}, we obtain a sharp
anchored estimate that separately controls the component in
\(L_p(\N)\) and the residual component orthogonal to it. We also give a
perturbative formulation, prove the independent optimality of both
quadratic coefficients, and recover Theorem~\ref{thm:main1} and the
Ricard--Xu inequality \eqref{eq:RX} as special cases.

Let $\M$ be equipped with a faithful normal semifinite weight $\varphi$, and let
$\N\subseteq\M$ be a von Neumann subalgebra such that
$\varphi|_{\N}$ is semifinite and $\N$ is invariant under the modular group
$\sigma^\varphi$. By Takesaki's theorem~\cite[pp.~306--310]{Tak72}, there is a
unique $\varphi$-preserving normal faithful conditional expectation
$
	\E:\M\longrightarrow\N.
$
For every \(1\le r<\infty\), the map \(\E\) extends to a positive
contractive projection from \(L_r(\M)\) onto \(L_r(\N)\); see
\cite[Example~5.8]{HJX10}. We use the same symbol for all these
extensions.

The following duality relation will be used repeatedly.

\begin{lem}\label{lem:expectation-duality}
	Let $1<p<\infty$ and $q=p/(p-1)$. For $z\in L_q(\N)$ and
	$x\in L_p(\M)$,
	\begin{equation}\label{eq:expectation-duality}
		\Tr(z^*x)=\Tr(z^*\E x).
	\end{equation}
	Consequently, if $a\in L_p(\N)$, then
	\begin{equation}\label{eq:tangent-orthogonality}
		\pair{J_p(a)}{x-\E x}=0.
	\end{equation}
\end{lem}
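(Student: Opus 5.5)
The duality relation \eqref{eq:expectation-duality} should follow from the module property of the extended conditional expectation together with the fact that \(\E\) preserves the trace functional. Afterwards, \eqref{eq:tangent-orthogonality} follows by applying it with \(z=J_p(a)\).

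For \eqref{eq:expectation-duality}, the main input is that the extension \(\E\) on Haagerup spaces satisfies the bimodule property \(\E(z^*x)=z^*\E(x)\) for \(z\in L_q(\N)\) and \(x\in L_p(\M)\). Here \(\E\) is regarded as a map \(L_1(\M)\to L_1(\N)\). We also need \(\Tr\circ\E=\Tr\) on \(L_1(\M)\), which is the \(L_1\)-level expression of \(\varphi\circ\E=\varphi\). Both facts are part of the construction in \cite[Example~5.8]{HJX10}, or of Junge--Xu style expectation theory. Granting them,
\[
\Tr(z^*x)=\Tr(\E(z^*x))=\Tr(z^*\E x).
\]
If one wants to avoid invoking the module property at the \(L_p\)-level, I would argue by density instead. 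Both sides of \eqref{eq:expectation-duality} are continuous in \(x\in L_p(\M)\) and \(z\in L_q(\N)\) by H\"older's inequality and the contractivity of \(\E\) on \(L_p\). It therefore suffices to check the identity on dense subspaces such as \(x=D^{1/(2p)}\,m\,D^{1/(2p)}\) with \(m\in\M\) analytic for \(\sigma^\varphi\) (in the appropriate Kosaki/Haagerup embedding), and similarly for \(z\) with \(n\in\N\). Since \(\N\) is \(\sigma^\varphi\)-invariant, the density of \(\varphi|_\N\) is the restriction of \(D\), and \(\E\) commutes with these embeddings. The trace pairing then reduces to \(\varphi\)-expressions of the form \(\varphi(n^*\sigma_{\cdot}(m))\), where \(\varphi\circ\E=\varphi\) and the \(\N\)-bimodularity of \(\E\) on \(\M\) give the identity. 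I expect this density bookkeeping, namely matching the symmetric embeddings and the modular invariance, to be the only genuinely technical point. Citing \cite{HJX10} for the \(L_p\)-extension and its bimodularity is the route I would try first.

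For \eqref{eq:tangent-orthogonality}, let \(a\in L_p(\N)\) with polar decomposition \(a=u|a|\). Then \(u\in\N\) and \(|a|\in L_p(\N)_+\), so \(J_p(a)=\|a\|_p^{2-p}u|a|^{p-1}\in L_q(\N)\). When \(a=0\) the claim is trivial. Applying \eqref{eq:expectation-duality} with \(z=J_p(a)\) gives \(\Tr(J_p(a)^*x)=\Tr(J_p(a)^*\E x)\). Taking real parts and using linearity yields \(\pair{J_p(a)}{x-\E x}=0\).

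The one point needing care here is that the polar decomposition and the power \(|a|^{p-1}\) computed in \(L_p(\N)\) agree with those computed in \(L_p(\M)\). This holds because \(L_p(\N)\) sits isometrically inside \(L_p(\M)\) as a subspace closed under these functional-calculus operations, so \(J_p(a)\) is the same element in both spaces.
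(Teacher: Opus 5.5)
Your proof is correct, but it obtains \eqref{eq:expectation-duality} by a slightly different route from the paper.

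The paper cites directly that, under Haagerup duality, the inclusion $L_q(\N)\hookrightarrow L_q(\M)$ is the Banach adjoint of $\E:L_p(\M)\to L_p(\N)$ \cite[Proposition~5.4(ii) and Example~5.8]{HJX10}. That statement \emph{is} \eqref{eq:expectation-duality}, so nothing further is needed.

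You instead factor the identity through $L_1$. You combine $\Tr\circ\E=\Tr$ on $L_1(\M)$ with the bimodule property $\E(z^*x)=z^*\E x$ for $z\in L_q(\N)$ and $x\in L_p(\M)$. This shows where $\varphi\circ\E=\varphi$ enters, but it is not more economical. The module property for \emph{unbounded} $z\in L_q(\N)$ is of the same depth as the adjoint relation: each follows from the other together with the $L_\infty$--$L_1$ duality, since $\N$ separates $L_1(\N)$. If you only have the module property for bounded $n\in\N$, extending it to $L_q(\N)$ needs density bookkeeping of the kind you sketch in your alternative argument.

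For \eqref{eq:tangent-orthogonality} you argue exactly as the paper does. Your check that $u$ and $|a|^{p-1}$ are the same whether computed in $L_p(\N)$ or in $L_p(\M)$ makes explicit the membership $J_p(a)\in L_q(\N)$, which the paper asserts without comment.
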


%\begin{proof}
%	Under Haagerup duality, the inclusion
%	$L_q(\N)\hookrightarrow L_q(\M)$ is the Banach adjoint of
%	$\E:L_p(\M)\to L_p(\N)$; see~\cite[p.~2144]{HJX10}. This gives
%	\eqref{eq:expectation-duality}. Since $a\in L_p(\N)$ implies
%	$J_p(a)\in L_q(\N)$, applying \eqref{eq:expectation-duality} to
%	$x-\E x$ gives \eqref{eq:tangent-orthogonality}.
%\end{proof}
\begin{proof}
	Under Haagerup duality, the inclusion
	$L_q(\N)\hookrightarrow L_q(\M)$ is the Banach adjoint of
	$\E:L_p(\M)\to L_p(\N)$; see
	\cite[Proposition~5.4(ii) and Example~5.8]{HJX10}. This gives
	\eqref{eq:expectation-duality}. Since $a\in L_p(\N)$ implies
	$J_p(a)\in L_q(\N)$, applying \eqref{eq:expectation-duality} to
	$x-\E x$ gives \eqref{eq:tangent-orthogonality}.
\end{proof}

We next identify the exact structural identity underlying the anchored estimates. It is a Bregman--Pythagorean decomposition with respect to the conditional expectation \(\E\).

\begin{thm}\label{thm:bregman-pythagorean}
	For every \(x\in L_p(\M)\) and \(a\in L_p(\N)\),
	\begin{equation}\label{eq:bregman-pythagorean}
		\mathcal D_p(x\mid a)
		=
		\mathcal D_p(x\mid\E x)
		+
		\mathcal D_p(\E x\mid a).
	\end{equation}
	Moreover,
	\begin{equation}\label{eq:bregman-residual}
		\mathcal D_p(x\mid\E x)
		=
		\frac12\bigl(\|x\|_p^2-\|\E x\|_p^2\bigr).
	\end{equation}
\end{thm}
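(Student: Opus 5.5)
The plan is to expand all three Bregman divergences from Definition~\ref{def:bregman} and verify the identity algebraically, using Lemma~\ref{lem:expectation-duality} to eliminate the cross terms. The key observation is that both base points $a$ and $\E x$ lie in $L_p(\N)$. Hence $J_p(a)$ and $J_p(\E x)$ belong to $L_q(\N)$: the polar decomposition of an element of $L_p(\N)$, computed in $\N$, gives $u\in\N$ and $|x|^{p-1}\in L_q(\N)$.

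First I would prove the residual formula \eqref{eq:bregman-residual}. By definition,
\[
\mathcal D_p(x\mid\E x)=\tfrac12\|x\|_p^2-\tfrac12\|\E x\|_p^2-\pair{J_p(\E x)}{x-\E x}.
\]
Applying \eqref{eq:tangent-orthogonality} with $a$ replaced by $\E x\in L_p(\N)$ kills the last term, which gives \eqref{eq:bregman-residual}.

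Next I would expand the right-hand side of \eqref{eq:bregman-pythagorean}:
\[
\mathcal D_p(\E x\mid a)=\tfrac12\|\E x\|_p^2-\tfrac12\|a\|_p^2-\pair{J_p(a)}{\E x-a}.
\]
Adding this to the residual term, the $\|\E x\|_p^2$ contributions cancel, leaving
\[
\tfrac12\|x\|_p^2-\tfrac12\|a\|_p^2-\pair{J_p(a)}{\E x-a}.
\]
It remains to see that $\pair{J_p(a)}{\E x-a}=\pair{J_p(a)}{x-a}$. The difference of the two sides is $\pair{J_p(a)}{x-\E x}$, which vanishes by \eqref{eq:tangent-orthogonality} since $a\in L_p(\N)$. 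This matches $\mathcal D_p(x\mid a)$ exactly.

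The only point needing care is the justification that $J_p$ maps $L_p(\N)$ into $L_q(\N)$ under the Haagerup identifications. In Lemma~\ref{lem:expectation-duality} this is already asserted, namely that $a\in L_p(\N)$ implies $J_p(a)\in L_q(\N)$. Since $\E x\in L_p(\N)$ because $\E$ projects onto $L_p(\N)$, the same applies to $\E x$. So no real obstacle remains beyond bookkeeping, and all terms are finite because the pairings are between $L_q$ and $L_p$.
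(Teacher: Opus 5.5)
Your proposal is correct and is essentially the paper's proof. The paper packages your two cancellations into the three-point identity
\[
\mathcal D_p(x\mid a)-\mathcal D_p(x\mid z)-\mathcal D_p(z\mid a)=\pair{J_p(z)-J_p(a)}{x-z}
\]
with $z=\E x$; its right-hand side vanishes by Lemma~\ref{lem:expectation-duality} because $J_p(\E x)$ and $J_p(a)$ both lie in $L_q(\N)$, and \eqref{eq:bregman-residual} is then read off from the definition exactly as you do.
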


\begin{proof}
	A direct expansion of \eqref{eq:bregman} gives the three-point
	identity
	\begin{equation}\label{eq:bregman-three-point}
		\mathcal D_p(x\mid a)-\mathcal D_p(x\mid z)
		-\mathcal D_p(z\mid a)
		=\pair{J_p(z)-J_p(a)}{x-z}.
	\end{equation}
	Put $z=\E x$. Both $J_p(z)$ and $J_p(a)$ belong to $L_q(\N)$, so the
	right-hand side of \eqref{eq:bregman-three-point} vanishes by
	Lemma~\ref{lem:expectation-duality}. This proves
	\eqref{eq:bregman-pythagorean}. Using the defining formula
	\eqref{eq:bregman} with base point \(\E x\), together with
	\eqref{eq:tangent-orthogonality}, gives \eqref{eq:bregman-residual}.
\end{proof}

\begin{rem}\label{rem:pythagorean-exact}
	The decomposition \eqref{eq:bregman-pythagorean} is exact for every
	$1<p<\infty$. The change of direction at $p=2$ enters only through the
	quadratic estimates applied to its two summands.
\end{rem}

We shall use the conditional-expectation inequality of Ricard and
Xu~\cite[Theorem~1]{RX16}.

\begin{lem}[\cite{RX16}]\label{lem:ricard-xu-expectation}
	Let $x\in L_p(\M)$. If $1<p\le2$, then
	\begin{equation*}
		\|x\|_p^2
		\ge \|\E x\|_p^2+(p-1)\|x-\E x\|_p^2.
	\end{equation*}
	If $2\le p<\infty$, the inequality is reversed.
\end{lem}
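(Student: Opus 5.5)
The plan is to derive Lemma~\ref{lem:ricard-xu-expectation} from what is already proved. We combine Theorem~\ref{thm:main1} (in its Bregman form, Corollary~\ref{cor:bregman-two-point}) with the duality orthogonality of Lemma~\ref{lem:expectation-duality}. A direct application does not work, and we explain why before giving the actual route.

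\textbf{Why the direct route falls short.} Put $a=\E x\in L_p(\N)$ and $y=x-\E x$, and apply \eqref{eq:main} with base point $a$ and increment $y$:
\[
\|x\|_p^2\ge\|\E x\|_p^2+2\pair{J_p(\E x)}{x-\E x}+(p-1)\|x-\E x\|_p^2 .
\]
Since $J_p(\E x)\in L_q(\N)$, the middle term vanishes by \eqref{eq:tangent-orthogonality}. This looks like exactly the claimed inequality for $1<p\le2$, and for $2\le p<\infty$ the reversed form of Theorem~\ref{thm:main1} would give the reverse in the same way. That is the entire formal argument.

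The flaw is that it double counts. The paper explicitly says that Lemma~\ref{lem:ricard-xu-expectation} is needed to control the residual \emph{separately}, and the Pythagorean identity \eqref{eq:bregman-residual} only gives $\mathcal D_p(x\mid\E x)=\tfrac12(\|x\|_p^2-\|\E x\|_p^2)$. Re-checking the computation, the tangent inequality at base $\E x$ with increment $x-\E x$ yields precisely $2\mathcal D_p(x\mid \E x)\ge (p-1)\|x-\E x\|_p^2$, which by \eqref{eq:bregman-residual} is the lemma. So the derivation is fully valid for $1<p\le 2$, and the same steps with reversed inequalities give the case $2\le p<\infty$.

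\textbf{Steps in order.}
\begin{enumerate}
\item Verify that $J_p(\E x)\in L_q(\N)$. The polar decomposition of an element of $L_p(\N)$ lives in $\N$ and $L_p(\N)$, so $u|\E x|^{p-1}\in L_q(\N)$.
\item Apply Lemma~\ref{lem:expectation-duality} to kill the first-order term.
\item Invoke Theorem~\ref{thm:main1}, in both directions.
\item Handle the trivial case $\E x=0$, where $J_p(0)=0$.
\end{enumerate}

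\textbf{Main obstacle.} The only delicate point is step~1. We need the canonical identification $L_p(\N)\subset L_p(\M)$ to be compatible with polar decomposition and powers, so that $J_p$ computed in $L_p(\N)$ agrees with $J_p$ computed in $L_p(\M)$. This holds because $\N$ is $\sigma^\varphi$-invariant and the Haagerup spaces embed via the crossed product restriction; this is the setting of \cite[Example~5.8]{HJX10}.
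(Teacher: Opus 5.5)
Your argument is correct, but it is not the paper's route. The paper gives no proof of this lemma and simply imports it from Ricard--Xu \cite[Theorem~1]{RX16}; your derivation shows that the import is unnecessary.

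Apply Theorem~\ref{thm:main1} at base point $\E x$ with increment $x-\E x$. The first-order term $\pair{J_p(\E x)}{x-\E x}$ vanishes by \eqref{eq:tangent-orthogonality}, and what remains is exactly the lemma, in both ranges of $p$. In the paper's own notation, this is Corollary~\ref{cor:bregman-two-point} with $a=\E x$ combined with the residual identity \eqref{eq:bregman-residual}. Neither ingredient uses \cite{RX16}, so there is no circularity.

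What your route buys is self-containment and a clearer structure. The residual bounds in Theorems~\ref{thm:anchored-expectation} and~\ref{thm:finite-martingale}, which the paper credits to Ricard--Xu, become further instances of the same two-point Bregman estimate. Sections~\ref{sec:conditional-expectations}--\ref{sec:martingale} then rest only on Theorem~\ref{thm:main1} and Lemma~\ref{lem:expectation-duality}.

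What the citation buys is independence from the Section~\ref{sec:proof-main} machinery, namely the interpolated Hessian bound and the Haagerup reduction. The point you single out as the main obstacle, $J_p(\E x)\in L_q(\N)$, is already asserted in the paper's proof of Lemma~\ref{lem:expectation-duality}. So you need nothing beyond what the paper itself assumes.

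Finally, drop the paragraph headed ``Why the direct route falls short''. There is no double counting: the Bregman--Pythagorean identity is exact, and its two summands can be controlled by the same two-point estimate applied to different pairs. Announcing a flaw and then retracting it only obscures a three-line proof.
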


\begin{thm}\label{thm:anchored-expectation}
	Let \(a\in L_p(\N)\) and \(x\in L_p(\M)\). If \(1<p\le2\), then
	\begin{equation}\label{eq:anchored-expectation}
		\mathcal D_p(x\mid a)
		\ge
		\frac{p-1}{2}
		\left(
		\|\E x-a\|_p^2
		+
		\|x-\E x\|_p^2
		\right).
	\end{equation}
	If \(2\le p<\infty\), the inequality is reversed. Moreover, the
	coefficients of \(\|\E x-a\|_p^2\) and \(\|x-\E x\|_p^2\) are
	independently optimal.
\end{thm}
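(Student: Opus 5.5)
The plan is to combine the exact decomposition of Theorem~\ref{thm:bregman-pythagorean} with two quadratic estimates, one for each summand. By \eqref{eq:bregman-pythagorean},
\[
\mathcal D_p(x\mid a)=\mathcal D_p(x\mid\E x)+\mathcal D_p(\E x\mid a).
\]
For the second summand, apply Corollary~\ref{cor:bregman-two-point} with the points \(\E x\) and \(a\). This gives \(\mathcal D_p(\E x\mid a)\ge\frac{p-1}{2}\|\E x-a\|_p^2\) when \(1<p\le2\), and the reverse inequality when \(p\ge2\). For the first summand, use \eqref{eq:bregman-residual}, which gives \(\mathcal D_p(x\mid\E x)=\frac12(\|x\|_p^2-\|\E x\|_p^2)\). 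Lemma~\ref{lem:ricard-xu-expectation} then bounds this below by \(\frac{p-1}{2}\|x-\E x\|_p^2\) for \(1<p\le2\), and above by the same quantity for \(p\ge2\). Adding the two estimates yields \eqref{eq:anchored-expectation} and its reverse. This part is routine once the earlier results are in place.

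The substantive part is the independent optimality of the two coefficients. Suppose the inequality holds with constants \(\alpha\|\E x-a\|_p^2+\beta\|x-\E x\|_p^2\) on the right. We must show \(\alpha\le p-1\) and \(\beta\le p-1\) in the range \(1<p\le 2\), and the reversed bounds for \(p\ge2\).

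\emph{Optimality of \(\alpha\).} Take \(\N=\M\) and \(\E=\id\). Then \(x-\E x=0\), and the estimate reduces to Corollary~\ref{cor:bregman-two-point}, which is Theorem~\ref{thm:main1}. The two-point example \(\M=\mathbb C^2\), \(x=(1,1)\), \(h=(1,-1)\) from the proof of Theorem~\ref{thm:main1} therefore forces \(\alpha\le p-1\) (respectively \(\ge p-1\)).

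\emph{Optimality of \(\beta\).} Choose \(a=\E x\). The inequality becomes \(\|x\|_p^2-\|\E x\|_p^2\ge\beta\|x-\E x\|_p^2\), so it suffices to show that the constant \(p-1\) in the Ricard--Xu inequality is attained asymptotically. Take \(\M=\mathbb C^2\) with the normalized trace and \(\N=\mathbb C1\), so that \(\E\) is averaging. Set \(x=(1+t,1-t)=1+th\). Then \(\E x=1\) and \(x-\E x=th\) with \(\|th\|_p=|t|\). The expansion already computed gives
\[
\|x\|_p^2-\|\E x\|_p^2=(p-1)t^2+O(t^4),
\]
so dividing by \(t^2\) and letting \(t\to0\) forces \(\beta\le p-1\), and \(\beta\ge p-1\) in the reversed case. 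Since each of the two test configurations makes the other term vanish, the two coefficients are shown optimal independently. This completes the plan.

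The main point needing care is making sure the conditional expectation \(\E\) in these examples satisfies the standing assumptions: \(\N\) must be modular-invariant and \(\varphi|_\N\) semifinite. Both hold trivially for a trace on \(\mathbb C^2\) and for \(\N=\M\).
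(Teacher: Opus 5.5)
Your proposal is correct and follows the paper's proof essentially verbatim. The inequality comes from the Bregman--Pythagorean decomposition, with Corollary~\ref{cor:bregman-two-point} applied in $L_p(\N)$ and the Ricard--Xu lemma applied to the residual. Optimality comes from $\E=\id$ and from the averaging expectation on $\mathbb C^2$ with $a=(1,1)$ and $x_t=(1+t,1-t)$. One small normalization slip: with the competing right-hand side written as $\alpha\|\E x-a\|_p^2+\beta\|x-\E x\|_p^2$, the thresholds are $(p-1)/2$, not $p-1$. You should either put a factor $\tfrac12$ in front, as your reduction to $\|x\|_p^2-\|\E x\|_p^2\ge\beta\|x-\E x\|_p^2$ implicitly does, or compare with $(p-1)/2$.
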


\begin{proof}
	Assume first that \(1<p\le2\). By
	Theorem~\ref{thm:bregman-pythagorean},
	\[
	\mathcal D_p(x\mid a)
	=
	\mathcal D_p(x\mid\E x)
	+
	\mathcal D_p(\E x\mid a).
	\]
	Using \eqref{eq:bregman-residual} together with
	Lemma~\ref{lem:ricard-xu-expectation}, we obtain
	\[
		\mathcal D_p(x\mid\E x)
		=
		\frac12
		\bigl(\|x\|_p^2-\|\E x\|_p^2\bigr)\ge
		\frac{p-1}{2}\|x-\E x\|_p^2.
	\]
	Since \(a,\E x\in L_p(\N)\), Corollary~\ref{cor:bregman-two-point},
	applied in \(L_p(\N)\), gives
	\[
	\mathcal D_p(\E x\mid a)
	\ge
	\frac{p-1}{2}\|\E x-a\|_p^2.
	\]
	Adding these two estimates yields
	\[
	\mathcal D_p(x\mid a)
	\ge
	\frac{p-1}{2}
	\left(
	\|\E x-a\|_p^2
	+
	\|x-\E x\|_p^2
	\right),
	\]
	which proves \eqref{eq:anchored-expectation}.
	
	For \(2\le p<\infty\), both
	Lemma~\ref{lem:ricard-xu-expectation} and
	Corollary~\ref{cor:bregman-two-point} hold with the reverse
	inequality, whereas the Bregman--Pythagorean decomposition remains
	exact. The same argument therefore yields the reverse of
	\eqref{eq:anchored-expectation}.

	It remains to verify the optimality of the two quadratic
coefficients. We show that they are independently sharp.

For the coefficient of \(\|\E x-a\|_p^2\), take
\(\N=\M\) and \(\E=\id\). Then the residual term vanishes, and
\eqref{eq:anchored-expectation} reduces to the sharp two-point
estimate
\[
\mathcal D_p(x\mid a)
\ge
\frac{p-1}{2}\|x-a\|_p^2,
\qquad 1<p\le2,
\]
with the reverse inequality for \(2\le p<\infty\). Its optimality
follows from Theorem~\ref{thm:main1}.

To prove the optimality of the coefficient of
\(\|x-\E x\|_p^2\), consider
\(\M=\ell_\infty^2\) equipped with the normalized trace
$
\tau(\alpha,\beta)=(\alpha+\beta)/2.
$
Let \(\N=\mathbb C(1,1)\), and let
$
\E(\alpha,\beta)
=
\left(
(\alpha+\beta)/2,
(\alpha+\beta)/2
\right).
$
For sufficiently small \(t\in\mathbb R\), set
$
a=(1,1),
$ and $
x_t=(1+t,1-t).
$
Then
\[
\E x_t=a,
\qquad
\|x_t-\E x_t\|_p^2=t^2.
\]
Moreover, by \eqref{eq:bregman-residual},
\[
\begin{aligned}
	\mathcal D_p(x_t\mid a)
	&=
	\frac12\bigl(\|x_t\|_p^2-\|a\|_p^2\bigr)\\
	&=
	\frac12
	\left[
	\left(
	\frac{(1+t)^p+(1-t)^p}{2}
	\right)^{2/p}
	-1
	\right]\\
	&=
	\frac{p-1}{2}t^2+O(t^4).
\end{aligned}
\]
If the coefficient \((p-1)/2\) of the residual term were replaced by
\(C\), then the lower estimate for \(1<p\le2\) would force
\(C\le (p-1)/2\), whereas the upper estimate for \(2\le p<\infty\)
would force \(C\ge (p-1)/2\). Hence \((p-1)/2\) is optimal in both
ranges.
\end{proof}

The anchored theorem may also be written in perturbative form.

\begin{cor}\label{cor:expectation-perturbation}
	Let $a\in L_p(\N)$ and $y\in L_p(\M)$. If $1<p\le2$, then
	\begin{equation*}
			\|a+y\|_p^2
			\ge \|a\|_p^2+2\pair{J_p(a)}{\E y}
			+(p-1)\|\E y\|_p^2 +(p-1)\|y-\E y\|_p^2.
	\end{equation*}
	If $2\le p<\infty$, the inequality is reversed. Moreover,
	$\pair{J_p(a)}{\E y}=\pair{J_p(a)}{y}$.
\end{cor}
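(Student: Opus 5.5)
This is a direct substitution into Theorem~\ref{thm:anchored-expectation}, after rewriting the Bregman divergence in perturbative form. Fix \(a\in L_p(\N)\) and \(y\in L_p(\M)\), and set \(x:=a+y\in L_p(\M)\). Since \(\E\) is a projection onto \(L_p(\N)\) and \(a\in L_p(\N)\), we have \(\E a=a\). Hence
\[
\E x-a=\E y,
\qquad
x-\E x=y-\E y .
\]

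We start with the identity \(\pair{J_p(a)}{\E y}=\pair{J_p(a)}{y}\). This is exactly \eqref{eq:tangent-orthogonality} of Lemma~\ref{lem:expectation-duality} applied to \(y\). It says \(\pair{J_p(a)}{y-\E y}=0\), because \(J_p(a)\in L_q(\N)\).

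Next we write out the Bregman divergence from Definition~\ref{def:bregman}:
\[
\mathcal D_p(x\mid a)=\tfrac12\|a+y\|_p^2-\tfrac12\|a\|_p^2-\pair{J_p(a)}{y}.
\]
For \(1<p\le2\), Theorem~\ref{thm:anchored-expectation} gives
\[
\mathcal D_p(x\mid a)\ge\tfrac{p-1}{2}\bigl(\|\E y\|_p^2+\|y-\E y\|_p^2\bigr).
\]
We substitute the expression for \(\mathcal D_p(x\mid a)\) into this bound and multiply by \(2\). Then we replace \(\pair{J_p(a)}{y}\) by \(\pair{J_p(a)}{\E y}\) using the identity from the previous step. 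This gives the stated inequality.

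For \(2\le p<\infty\), Theorem~\ref{thm:anchored-expectation} holds with the inequality reversed, and the same substitution gives the reversed perturbative inequality.

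No real obstacle arises. The only point needing care is that \(\E a=a\) for \(a\in L_p(\N)\), which holds because the extension of \(\E\) is a projection onto \(L_p(\N)\).
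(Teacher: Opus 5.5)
Your proposal is correct and follows the paper's proof: substitute \(x=a+y\) into Theorem~\ref{thm:anchored-expectation}, use \(\E a=a\), expand \(\mathcal D_p(a+y\mid a)\), and get the identity \(\pair{J_p(a)}{\E y}=\pair{J_p(a)}{y}\) from \eqref{eq:tangent-orthogonality} of Lemma~\ref{lem:expectation-duality}. You have simply written out the expansion and the substitutions \(\E x-a=\E y\), \(x-\E x=y-\E y\) that the paper leaves implicit.
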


\begin{proof}
	Apply Theorem~\ref{thm:anchored-expectation} to $x=a+y$ and use
	$\E a=a$. The last assertion follows from
	Lemma~\ref{lem:expectation-duality}.
\end{proof}

\begin{cor}\label{cor:recoveries}
	The following special cases hold.
	\begin{enumerate}[label=\textup{\rm(\roman*)},leftmargin=2.2em]
		\item Taking $a=\E x$ in
		Theorem~\ref{thm:anchored-expectation} reduces to
		Lemma~\ref{lem:ricard-xu-expectation}.
		\item Taking $\N=\M$ and $\E=\id$ gives
		Corollary~\ref{cor:bregman-two-point}, and hence
		Theorem~\ref{thm:main1}.
		\item At $p=2$, one obtains the identity
		\begin{equation*}
				\|x\|_2^2
				=\|a\|_2^2+2\Ree\Tr\bigl(a^*(x-a)\bigr)
				+\|\E x-a\|_2^2+\|x-\E x\|_2^2.
		\end{equation*}
	\end{enumerate}
\end{cor}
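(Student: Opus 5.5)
We verify the three special cases of Corollary~\ref{cor:recoveries} one at a time. Each follows by direct substitution into Theorem~\ref{thm:anchored-expectation}, together with the exact identities of Theorem~\ref{thm:bregman-pythagorean} and the definition \eqref{eq:bregman}.

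For (i), set \(a=\E x\), which lies in \(L_p(\N)\). The first quadratic term \(\|\E x-a\|_p^2\) then vanishes. By \eqref{eq:bregman-residual}, the left-hand side equals \(\tfrac12(\|x\|_p^2-\|\E x\|_p^2)\). Multiplying \eqref{eq:anchored-expectation} by \(2\) gives exactly the inequality of Lemma~\ref{lem:ricard-xu-expectation}, and the reversed inequality for \(p\ge2\) is handled the same way. This direction only shows that the Ricard--Xu inequality is a special case of the statement. There is no circularity: the proof of Theorem~\ref{thm:anchored-expectation} used Lemma~\ref{lem:ricard-xu-expectation} as an input, and the claim here is only that specializing recovers it.

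For (ii), take \(\N=\M\) and \(\E=\id\). Then \(\|x-\E x\|_p=0\) and \(\|\E x-a\|_p=\|x-a\|_p\), so \eqref{eq:anchored-expectation} becomes Corollary~\ref{cor:bregman-two-point}. To recover Theorem~\ref{thm:main1}, substitute \(x=a+y\) and expand \(\mathcal D_p(a+y\mid a)\) using \eqref{eq:bregman}, then multiply by \(2\). Sharpness of \(p-1\) is inherited from the optimality part of Theorem~\ref{thm:anchored-expectation}.

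For (iii), at \(p=2\) both directions of \eqref{eq:anchored-expectation} hold, so it is an equality with coefficient \((p-1)/2=1/2\). Since \(J_2(a)=a\), definition \eqref{eq:bregman} gives
\[
\mathcal D_2(x\mid a)=\tfrac12\|x\|_2^2-\tfrac12\|a\|_2^2-\Ree\Tr\bigl(a^*(x-a)\bigr).
\]
Equating this with \(\tfrac12(\|\E x-a\|_2^2+\|x-\E x\|_2^2)\) and multiplying by \(2\) yields the displayed identity. The only step needing care is this \(p=2\) case, since one must note that the lower and upper versions coincide there. One could also check it directly: \(\E\) is the orthogonal projection in \(L_2\), so Pythagoras applies to \(x-a=(\E x-a)+(x-\E x)\).
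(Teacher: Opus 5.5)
Your proposal is correct and follows the paper's own proof. For (i) you set $a=\E x$ and use \eqref{eq:bregman-residual}, for (ii) you specialize to $\N=\M$, $\E=\id$, and for (iii) you use $J_2(a)=a$ together with the fact that the estimates become equalities at $p=2$. Your observation that the lower and upper versions of Theorem~\ref{thm:anchored-expectation} coincide at $p=2$ is the same point the paper makes. One small thing to soften is the sharpness remark in (ii). The paper proves the optimality of the coefficient of $\|\E x-a\|_p^2$ by appealing back to Theorem~\ref{thm:main1}, so inheriting the sharpness of $p-1$ from it is a consistency check rather than an independent derivation, just as you noted for (i).
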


\begin{proof}
	For \textup{(i)}, set $a=\E x$ in
	Theorem~\ref{thm:anchored-expectation}, use
	\eqref{eq:bregman-residual}, and multiply by $2$. For \textup{(ii)},
	set $\N=\M$ and $\E=\id$. For \textup{(iii)}, one has $J_2(a)=a$,
	and both quadratic estimates used above are Hilbert-space equalities.
\end{proof}

\section{Anchored martingale inequalities}
\label{sec:martingale}

In this section, we iterate the Bregman--Pythagorean decomposition from
Theorem~\ref{thm:bregman-pythagorean} along an increasing filtration of
von Neumann subalgebras. This yields an exact telescoping identity for
the Bregman divergence, from which we derive anchored martingale
inequalities for both finite and infinite filtrations. As a special
case, by choosing the anchor to be the initial conditional expectation,
we recover the martingale convexity inequality of Ricard and Xu~\cite[Corollary~3]{RX16}.

Let
$
	\M_0\subset\M_1\subset\cdots\subset\M_n\subset\M
$
be a finite increasing family of $\sigma^\varphi$-invariant von Neumann
subalgebras such that each restriction $\varphi|_{\M_k}$ is semifinite. Let
\[
\E_k:\M\longrightarrow\M_k
\]
be the corresponding $\varphi$-preserving conditional expectation. The tower
property reads
\begin{equation}\label{eq:tower}
	\E_j\E_k=\E_j,
	\qquad 0\le j\le k\le n.
\end{equation}

The exact decomposition from the preceding section iterates without loss.

\begin{prop}\label{prop:bregman-telescope}
	Let $a\in L_p(\M_0)$ and $x\in L_p(\M)$. Then
	\begin{equation}\label{eq:bregman-telescope}
		\mathcal D_p(x\mid a)
		=\mathcal D_p(x\mid\E_nx)
		+\sum_{k=1}^{n}\mathcal D_p(\E_kx\mid\E_{k-1}x)
		+\mathcal D_p(\E_0x\mid a).
	\end{equation}
\end{prop}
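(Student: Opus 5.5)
We will prove \eqref{eq:bregman-telescope} by induction on \(n\), applying Theorem~\ref{thm:bregman-pythagorean} repeatedly with different conditional expectations and using the tower property \eqref{eq:tower} to identify the intermediate base points.

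\emph{First step: split off the top residual.} Since \(\M_0\subset\M_n\), the anchor \(a\) lies in \(L_p(\M_n)\). Theorem~\ref{thm:bregman-pythagorean}, applied with \(\N=\M_n\) and \(\E=\E_n\), gives
\[
\mathcal D_p(x\mid a)=\mathcal D_p(x\mid\E_nx)+\mathcal D_p(\E_nx\mid a).
\]

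\emph{Second step: descend one level at a time.} For \(k=n,n-1,\dots,1\) we peel off one term. Suppose we have reduced to \(\mathcal D_p(\E_kx\mid a)\) with \(a\in L_p(\M_{k-1})\). We apply Theorem~\ref{thm:bregman-pythagorean} inside the space \(L_p(\M)\). The point is \(\E_kx\), the subalgebra is \(\M_{k-1}\), and the expectation is \(\E_{k-1}\). This yields
\[
\mathcal D_p(\E_kx\mid a)=\mathcal D_p(\E_kx\mid\E_{k-1}\E_kx)+\mathcal D_p(\E_{k-1}\E_kx\mid a).
\]
By the tower property \eqref{eq:tower}, \(\E_{k-1}\E_k=\E_{k-1}\). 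Hence the right-hand side equals
\[
\mathcal D_p(\E_kx\mid\E_{k-1}x)+\mathcal D_p(\E_{k-1}x\mid a).
\]
Iterating down to \(k=1\) leaves the final term \(\mathcal D_p(\E_0x\mid a)\). Summing all the pieces gives exactly \eqref{eq:bregman-telescope}.

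\emph{Checks.} Each application of Theorem~\ref{thm:bregman-pythagorean} requires three hypotheses. First, \(\E_{k-1}\) must be the \(\varphi\)-preserving conditional expectation onto a \(\sigma^\varphi\)-invariant subalgebra with semifinite restricted weight; this holds by assumption. Second, the anchor must satisfy \(a\in L_p(\M_{k-1})\); this holds because \(L_p(\M_0)\subset L_p(\M_{k-1})\). Third, \(\E_kx\) must lie in \(L_p(\M)\); this is clear.

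\emph{Expected obstacle.} The only delicate point is that the \(L_p\)-extensions of the \(\E_k\) satisfy the tower property. This follows from \eqref{eq:tower} on \(\M\) by density and the contractivity of the extensions (see \cite[Example~5.8]{HJX10}). Equivalently, it follows because the extension \(\E_{k-1}\) restricted to \(L_p(\M_k)\) is the conditional expectation \(L_p(\M_k)\to L_p(\M_{k-1})\). Alternatively, we can avoid the tower property altogether. The three-point identity \eqref{eq:bregman-three-point}, combined with Lemma~\ref{lem:expectation-duality}, shows directly that the cross term \(\pair{J_p(\E_{k-1}x)-J_p(a)}{\E_kx-\E_{k-1}x}\) vanishes. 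Indeed, both duality elements lie in \(L_q(\M_{k-1})\subset L_q(\M_k)\). Moreover,
\[
\Tr(z^*\E_kx)=\Tr(z^*x)=\Tr(z^*\E_{k-1}x)\qquad\text{for }z\in L_q(\M_{k-1}).
\]
This vanishing is all the telescoping needs.
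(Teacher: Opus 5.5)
Your proof is correct and follows the paper's own argument. You split off $\mathcal D_p(x\mid\E_nx)$ with Theorem~\ref{thm:bregman-pythagorean}, then descend one level at a time with the expectations onto $\M_{n-1},\dots,\M_0$, using the tower property $\E_{k-1}\E_k=\E_{k-1}$ to identify the base points; your alternative of killing the cross term in \eqref{eq:bregman-three-point} directly via Lemma~\ref{lem:expectation-duality} is also valid and removes any need for the tower property at the $L_p$ level.
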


\begin{proof}
	Apply Theorem~\ref{thm:bregman-pythagorean} first to the expectation
	onto $\M_n$. Then iterate inside $\M_n$ through the expectations onto
	$\M_{n-1},\ldots,\M_1$. The identity $\E_{k-1}(\E_kx)=\E_{k-1}x$ follows from
	\eqref{eq:tower}, and the resulting terms are exactly those in
	\eqref{eq:bregman-telescope}.
\end{proof}

\begin{thm}\label{thm:finite-martingale}
	Let \(a\in L_p(\M_0)\) and \(x\in L_p(\M)\). If \(1<p\le2\), then
	\begin{equation}\label{eq:finite-martingale-lower}
			\mathcal D_p(x\mid a)
			\ge \frac{p-1}{2}\Bigg(
			\|\E_0x-a\|_p^2
			+\sum_{k=1}^{n}
			\|\E_kx-\E_{k-1}x\|_p^2
			+\|x-\E_nx\|_p^2
			\Bigg).
	\end{equation}
	If \(2\le p<\infty\), the inequality is reversed.
\end{thm}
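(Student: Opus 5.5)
The argument starts from the exact telescoping identity of Proposition~\ref{prop:bregman-telescope}, which writes \(\mathcal D_p(x\mid a)\) as a sum of \(n+2\) Bregman divergences. Each summand is then bounded separately, and the bounds are added. Since the identity is exact for all \(1<p<\infty\), the direction of the final inequality is inherited from the termwise estimates. Everything below is for \(1<p\le2\); for \(2\le p<\infty\) every cited estimate reverses and the same chain gives the reverse of \eqref{eq:finite-martingale-lower}.

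\emph{Residual term.} Apply \eqref{eq:bregman-residual} with \(\E=\E_n\) to get
\[
\mathcal D_p(x\mid\E_nx)=\tfrac12\bigl(\|x\|_p^2-\|\E_nx\|_p^2\bigr).
\]
Lemma~\ref{lem:ricard-xu-expectation} for \(\E_n\) then bounds this below by \(\frac{p-1}{2}\|x-\E_nx\|_p^2\).

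\emph{Anchor term.} Both \(\E_0x\) and \(a\) lie in \(L_p(\M_0)\). Corollary~\ref{cor:bregman-two-point}, applied in \(L_p(\M_0)\) or equally in \(L_p(\M)\), gives
\[
\mathcal D_p(\E_0x\mid a)\ge\tfrac{p-1}{2}\|\E_0x-a\|_p^2.
\]

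\emph{Increment terms.} For \(1\le k\le n\), the terms \(\mathcal D_p(\E_kx\mid\E_{k-1}x)\) have two admissible treatments. The first is Corollary~\ref{cor:bregman-two-point} directly, which gives \(\frac{p-1}{2}\|\E_kx-\E_{k-1}x\|_p^2\). The second uses the tower property: \(\E_{k-1}x=\E_{k-1}(\E_kx)\), and \(\E_{k-1}\) restricted to \(\M_k\) is the \(\varphi|_{\M_k}\)-preserving conditional expectation onto \(\M_{k-1}\). Hence \eqref{eq:bregman-residual} gives \(\frac12(\|\E_kx\|_p^2-\|\E_{k-1}x\|_p^2)\), and Lemma~\ref{lem:ricard-xu-expectation} inside \(\M_k\) gives the same lower bound. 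Either route suffices; I would use the two-point corollary because it needs no further hypotheses. Adding all the bounds gives \eqref{eq:finite-martingale-lower}.

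The only step needing care is the justification behind Proposition~\ref{prop:bregman-telescope}. Theorem~\ref{thm:bregman-pythagorean} must be applicable at each stage to the restricted expectation \(\E_{k-1}|_{L_p(\M_k)}\colon L_p(\M_k)\to L_p(\M_{k-1})\). This requires that \(\M_{k-1}\) be \(\sigma^{\varphi|_{\M_k}}\)-invariant with \(\varphi|_{\M_{k-1}}\) semifinite. That follows from the standing assumptions, since \(\sigma^{\varphi|_{\M_k}}=\sigma^\varphi|_{\M_k}\) for invariant \(\M_k\). The Haagerup spaces \(L_p(\M_k)\) embed isometrically and compatibly with duality in \(L_p(\M)\), so norms, \(J_p\) and pairings agree. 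Once this is accepted, the proof is a direct summation.
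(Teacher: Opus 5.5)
Your proof is correct and follows the paper's argument: the telescoping identity of Proposition~\ref{prop:bregman-telescope}, followed by termwise bounds, with the residual and anchor terms handled exactly as in the paper. The only difference is that you bound the increments with Corollary~\ref{cor:bregman-two-point} directly, where the paper uses \eqref{eq:bregman-residual} together with Lemma~\ref{lem:ricard-xu-expectation} in $\M_k$. Both routes give the identical estimate, and the same two-point argument at base point $\E_nx$ would also let you drop Lemma~\ref{lem:ricard-xu-expectation} for the residual term.
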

\begin{proof}
	Suppose $1<p\le2$. By \eqref{eq:bregman-residual} and
	Lemma~\ref{lem:ricard-xu-expectation},
	\[
	\mathcal D_p(x\mid\E_nx)
	\ge \frac{p-1}{2}\|x-\E_nx\|_p^2.
	\]
	Applying \eqref{eq:bregman-residual} and the same lemma in
	$\M_k$ to the conditional expectation $\E_{k-1}|_{\M_k}$ gives
	\[
	\mathcal D_p(\E_kx\mid\E_{k-1}x)
	\ge \frac{p-1}{2}
	\|\E_kx-\E_{k-1}x\|_p^2,
	\qquad 1\le k\le n.
	\]
	Finally, Corollary~\ref{cor:bregman-two-point} in $L_p(\M_0)$ gives
	\[
	\mathcal D_p(\E_0x\mid a)
	\ge \frac{p-1}{2}\|\E_0x-a\|_p^2.
	\]
	Substitution into \eqref{eq:bregman-telescope} proves
	\eqref{eq:finite-martingale-lower}. For $2\le p<\infty$, all three
	estimates reverse direction.
\end{proof}

The infinite-filtration form follows from martingale mean convergence in
Haagerup \(L_p\)-spaces; see \cite[Remark~6.1(i)]{HJX10}, together with
the reduction framework of that paper for the present semifinite-weight
setting.
\begin{cor}\label{cor:infinite-martingale}
	Let $(\M_k)_{k\ge0}$ be an increasing sequence of
	$\sigma^\varphi$-invariant von Neumann subalgebras such that
	$\varphi|_{\M_k}$ is semifinite for every $k$, and let
	$\E_k:\M\to\M_k$ be the corresponding $\varphi$-preserving conditional
	expectations. Assume that $\bigcup_{k\ge0}\M_k$ is weak-star dense in
	$\M$. Let $a\in L_p(\M_0)$ and $x\in L_p(\M)$. If $1<p\le2$, then
	\begin{equation}\label{eq:infinite-martingale-lower}
		\mathcal D_p(x\mid a)
		\ge \frac{p-1}{2}
		\left(
		\|\E_0x-a\|_p^2
		+\sum_{k\ge1}\|\E_kx-\E_{k-1}x\|_p^2
		\right).
	\end{equation}
	If $2\le p<\infty$, the inequality is reversed. The series in
	\eqref{eq:infinite-martingale-lower} is understood in \([0,\infty]\).
\end{cor}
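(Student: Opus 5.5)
We apply Theorem~\ref{thm:finite-martingale} to each finite truncation of the filtration and then let the truncation level tend to infinity.

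\textbf{Step 1 (truncation).} Fix \(1<p\le2\) and \(N\ge1\). Apply Theorem~\ref{thm:finite-martingale} to the finite filtration \(\M_0\subset\cdots\subset\M_N\). Since every term in the resulting lower bound is nonnegative, we may drop the residual term \(\|x-\E_Nx\|_p^2\). This gives
\[
\mathcal D_p(x\mid a)\ge\frac{p-1}{2}\Bigl(\|\E_0x-a\|_p^2+\sum_{k=1}^{N}\|\E_kx-\E_{k-1}x\|_p^2\Bigr).
\]
The left-hand side does not depend on \(N\), and the partial sums on the right increase with \(N\). Letting \(N\to\infty\) therefore yields \eqref{eq:infinite-martingale-lower}, with the series understood in \([0,\infty]\). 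In particular, the series is finite. No convergence result is needed in this range.

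\textbf{Step 2 (the range \(2\le p<\infty\)).} Here the truncated inequality reads
\[
\mathcal D_p(x\mid a)\le\frac{p-1}{2}\Bigl(\|\E_0x-a\|_p^2+\sum_{k=1}^{N}\|\E_kx-\E_{k-1}x\|_p^2+\|x-\E_Nx\|_p^2\Bigr).
\]
To pass to the limit we need \(\|x-\E_Nx\|_p\to0\). This is martingale mean convergence, \(\E_Nx\to x\) in \(L_p(\M)\), which holds under the weak-star density of \(\bigcup_k\M_k\) by \cite[Remark~6.1(i)]{HJX10}, applied through the reduction framework of that paper for the semifinite-weight setting.

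Given this convergence, the residual term vanishes as \(N\to\infty\), while the partial sums increase to the full series in \([0,\infty]\). Taking the limit of the right-hand side gives the reversed inequality. If the series diverges, the reversed inequality holds trivially.

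\textbf{Main obstacle.} The only delicate point is justifying mean convergence \(\E_Nx\to x\) in \(L_p(\M)\) in the Haagerup, semifinite-weight setting, for an increasing family of modular-invariant subalgebras with weak-star dense union. I would first reduce to the \(\sigma\)-finite case by cutting down with a \(\sigma\)-finite support projection, as in Lemma~\ref{lem:haagerup-transfer}. I would then argue as follows:
\begin{itemize}
\item \(\bigcup_k L_p(\M_k)\) is dense in \(L_p(\M)\), via the standard \(\varphi\)-dense embedding of \(\M\) into \(L_p\).
\item The maps \(\E_N\) are uniformly contractive.
\item Hence a density-plus-\(\varepsilon/3\) argument gives convergence for every \(x\).
\end{itemize}
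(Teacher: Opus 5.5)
Your argument is essentially the paper's: you truncate the filtration, apply Theorem~\ref{thm:finite-martingale}, and pass to the limit using monotonicity of the partial sums and the martingale mean convergence $\E_Nx\to x$ cited from \cite[Remark~6.1(i)]{HJX10}; your remark that for $1<p\le2$ the nonnegative residual can simply be dropped, so that no convergence is needed there, is a small sharpening of the paper's write-up. One caution about your optional sketch of that convergence: cutting down by a $\sigma$-finite support projection of $x$, as in Lemma~\ref{lem:haagerup-transfer}, does not by itself respect the filtration, because such a projection need not lie in $\M_0$ and then the $\E_k$ do not restrict to the corner, so you should rely on the cited result rather than on that reduction.
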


\begin{proof}
	Apply Theorem~\ref{thm:finite-martingale} to the first $n$ levels.
	Since $\E_nx\to x$ in $L_p(\M)$, the terminal residual tends to zero.
	For $1<p\le2$, the partial sums on the right-hand side are increasing,
	so passage to the limit gives \eqref{eq:infinite-martingale-lower}.
	For $2\le p<\infty$, the conclusion is immediate if the series
	diverges; if it is finite, pass to the limit in the finite-level upper
	estimate.
\end{proof}

Taking $a=\E_0x$ in Corollary~\ref{cor:infinite-martingale} and using
\eqref{eq:bregman-residual} gives
\begin{equation}\label{eq:rx-martingale-recovered}
	\|x\|_p^2
	\ge \|\E_0x\|_p^2
	+(p-1)\sum_{k\ge1}\|\E_kx-\E_{k-1}x\|_p^2,
	\qquad 1<p\le2,
\end{equation}
with the reverse inequality for $2\le p<\infty$. Inequality
\eqref{eq:rx-martingale-recovered} is the martingale convexity inequality
of Ricard and Xu~\cite[Corollary~3]{RX16}.

\section{Positive-cone tangent inequalities for \texorpdfstring{$0<p<1$}{0<p<1}}
\label{sec:quasi-banach}
This section concerns the quasi-Banach range
\(0<p<1\), where we prove
Proposition~\ref{prop:xu-tangent-no-direction} and
Theorems~\ref{thm:commutative-positive-cone-tangent}
and~\ref{thm:positive-cone-tangent}.

We first prove Proposition~\ref{prop:xu-tangent-no-direction}.

\begin{proof}[Proof of Proposition~\ref{prop:xu-tangent-no-direction}]
	Fix $0<p<1$ and work in $\ell_p^2$ with counting measure. For
	$x=(1,1)$ and $y=(1,1)$, the two sides of the formal extension of
	\eqref{eq:xu} are
	\[
	\|x+y\|_p^2=4\,2^{2/p}
	\quad\text{and}\quad
	\|x\|_p^2+2\langle j_p(x),y\rangle
	+(p-1)\|y\|_p^2=(p+2)2^{2/p}.
	\]
	Hence the left-hand side is strictly larger.
	
	For $x=(1,1)$ and $y=(1,-1)$, the corresponding values are
	\[
	\|x+y\|_p^2=4
	\quad\text{and}\quad
	\|x\|_p^2+2\langle j_p(x),y\rangle
	+(p-1)\|y\|_p^2=p\,2^{2/p}.
	\]
	The function $\phi(p)=p2^{2/p}$ satisfies
	\[
	\phi'(p)=2^{2/p}\left(1-\frac{2\log2}{p}\right)<0,
	\qquad 0<p\le1,
	\]
	and therefore $p2^{2/p}>4$ for $0<p<1$. Thus the opposite strict
	inequality also occurs.
\end{proof}

For \(0<p<1\), the squared quasi-norm is neither convex nor concave on the
whole space, and the Banach-space normalized duality map used above is not
available.
In the commutative argument below, the order condition \(x\pm y\ge0\)
implies \(x\ge |y|\), and hence \(y=0\) on \(\{x=0\}\). In the operator setting, \(A\pm B\ge0\) implies
\(\ker A\subseteq\ker B\), but it does not imply \(A\ge |B|\) in general. This distinction motivates the transfer argument below.

\subsection{The commutative positive cone}

Let $(\Omega,\mu)$ be a measure space, and let $L_p(\Omega;\mathbb R)$ be
equipped with
$
\|x\|_p=\left(\int_\Omega |x|^p\,d\mu\right)^{1/p}.
$

\begin{defin}\label{def:commutative-cone-tangent}
	Let $0<p<1$ and let $x,y\in L_p(\Omega;\mathbb R)$ satisfy
	$x\pm y\ge0$. Define
	\begin{equation}\label{eq:commutative-cone-tangent}
		\pair{J_{L_p,+}(x)}{y}
		:=
		\begin{cases}
			\displaystyle
			\|x\|_p^{2-p}\int_{\{x>0\}}x^{p-1}y\,d\mu,
			&x\ne0,\\[3mm]
			0,&x=0.
		\end{cases}
	\end{equation}
\end{defin}

The notation is directional: $J_{L_p,+}(x)$ need not belong to the continuous
dual of $L_p$. The integral in \eqref{eq:commutative-cone-tangent} is finite
because $|x^{p-1}y|\le x^p$. Moreover,
\begin{equation}\label{eq:commutative-cone-derivative}
	\pair{J_{L_p,+}(x)}{y}
	=\frac12\left.\frac{d}{dt}\right|_{t=0}\|x+ty\|_p^2.
\end{equation}
Indeed, the order assumption permits differentiation under the integral for
$|t|<1$, and the chain rule gives \eqref{eq:commutative-cone-derivative}.

The sharp quadratic constant follows from a scalar moment estimate.

\begin{lem}\label{lem:moment-p-below-one}
	Let \(0<p<1\), and let \(r\) be a real square-integrable random
	variable on a probability space. Then
\begin{equation}\label{eq:expectation-scalar}
		(2-p)(\mathbb E r)^2-(1-p)\mathbb E r^2
	\le
	p\,2^{2/p-2}\bigl(\mathbb E|r|^p\bigr)^{2/p},
\end{equation}
where the constant \(p\,2^{2/p-2}\) is optimal.
\end{lem}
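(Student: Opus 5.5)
Since both sides of \eqref{eq:expectation-scalar} are homogeneous of degree $2$ in $r$, I would first normalize. Write $m=\mathbb E r$ and $\sigma=\mathbb E r^2$. If $m=0$, the left-hand side is $-(1-p)\sigma\le0$ and there is nothing to prove. Otherwise, replacing $r$ by $r/m$, I may assume $m=1$, so $\sigma\ge1$. The left-hand side then equals
\[
L(\sigma):=(2-p)-(1-p)\sigma .
\]
If $L(\sigma)\le0$ we are again done, so the substantive case is $1\le\sigma<(2-p)/(1-p)$.

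The key step is a pointwise quadratic minorant of $|s|^p$ that touches it at $0$ and at a chosen point $a>0$. For every $a>0$ and $s\in\mathbb R$, I would prove
\[
|s|^p\ \ge\ (2-p)a^{p-1}s-(1-p)a^{p-2}s^2 .
\]
For $s\le0$ the right-hand side is nonpositive, so the inequality is trivial. For $s>0$, put $s=au$; the claim becomes $g(u):=u^p-(2-p)u+(1-p)u^2\ge0$. Now $g(u)/u=u^{p-1}-(2-p)+(1-p)u$ is convex on $(0,\infty)$, its derivative vanishes at $u=1$, and its value there is $0$. Hence $g\ge0$.

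Taking expectations gives
\[
\mathbb E|r|^p\ \ge\ a^{p-2}\bigl((2-p)a-(1-p)\sigma\bigr).
\]
I would maximize the right-hand side in $a$. The optimizer is $a=\sigma$, which yields
\[
\mathbb E|r|^p\ge\sigma^{p-1},
\qquad\text{hence}\qquad
\bigl(\mathbb E|r|^p\bigr)^{2/p}\ge \sigma^{-2(1-p)/p}.
\]

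It therefore remains to verify the one-variable inequality
\[
\sigma^{-2(1-p)/p}\ \ge\ \frac{2^{2-2/p}}{p}\,\bigl((2-p)-(1-p)\sigma\bigr),
\qquad \sigma\ge1 .
\]
The left-hand side is a convex function of $\sigma$, and the right-hand side is affine. I expect the right-hand side to be exactly the tangent line of the left-hand side at $\sigma=2$. Indeed, at $\sigma=2$ both sides equal $2^{2-2/p}$. Their derivatives also agree there, both being equal to $-\frac{1-p}{p}\,2^{2-2/p}$. Convexity then gives the inequality for all $\sigma>0$, and multiplying through by $p\,2^{2/p-2}$ yields \eqref{eq:expectation-scalar}. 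The main obstacle is identifying the right one-parameter family of quadratic minorants and the tangency point $\sigma=2$. Once those choices are made, every remaining step is elementary calculus.

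For optimality, I would take $r$ equal to $0$ or $2$, each with probability $1/2$. Then $\mathbb E r=1$ and $\mathbb E r^2=2$, so the left-hand side equals $p$. On the other side, $(\mathbb E|r|^p)^{2/p}=(2^{p-1})^{2/p}=2^{2-2/p}$, so the right-hand side also equals $p\,2^{2/p-2}\cdot2^{2-2/p}=p$. Equality holds, so no smaller constant is possible.
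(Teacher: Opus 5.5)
Your proof is correct. Its skeleton matches the paper's. Both arguments pass through the same intermediate moment bound, which in your normalization reads $\mathbb E|r|^p\ge\sigma^{p-1}$, or homogeneously $\mathbb E|r|^p\ge|\mathbb E r|^{2-p}(\mathbb E r^2)^{p-1}$. Both then solve the same one-variable problem, with the extremum at $\sigma=2$.

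The differences lie in how the two steps are carried out:
\begin{itemize}
\item The paper obtains the moment bound in one line from H\"older's inequality with exponents $2-p$ and $(2-p)/(1-p)$, applied to $\mathbb E|r|\ge|\mathbb E r|$. You derive the same bound from the pointwise quadratic minorant of $|s|^p$ touching at $0$ and at $a$, evaluated at $a=\sigma$.
\item The paper then maximizes $g(s)=\bigl((2-p)-(1-p)s\bigr)s^{2(1-p)/p}$ by differentiation. You replace this with the tangent-line inequality for the convex function $\sigma^{-2(1-p)/p}$ at $\sigma=2$, which is the same inequality after rearrangement.
\end{itemize}

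The paper's route is shorter once H\"older is available. Yours is entirely elementary and also exposes the equality cases. Equality in the averaged minorant forces $r\in\{0,\sigma\}$ almost surely, and strict convexity forces $\sigma=2$. This explains why the two-point law on $\{0,2\}$ is extremal; it is equivalent, by homogeneity, to the paper's indicator of an event of probability $1/2$.

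Incidentally, your case distinction according to the sign of $L(\sigma)$ is not needed, since the tangent-line inequality holds for all $\sigma>0$.
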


\begin{proof}
	Set
$
	a=|\mathbb E r|,
	b=(\mathbb E r^2)^{1/2}
$ and $
	m=(\mathbb E|r|^p)^{1/p}.
$
	If \(a=0\), then the left-hand side of \eqref{eq:expectation-scalar} is nonpositive, and there is
	nothing to prove. By H\"older's inequality with conjugate exponents
	\(2-p\) and \((2-p)/(1-p)\),
	\[
	a
	\le \mathbb E|r|
	\le
	\bigl(\mathbb E|r|^p\bigr)^{1/(2-p)}
	\bigl(\mathbb E r^2\bigr)^{(1-p)/(2-p)}
	=
	m^{p/(2-p)}b^{2(1-p)/(2-p)}.
	\]
	Put \(s=b^2/a^2\ge1\). Rearranging the preceding estimate gives
	\[
	m^2\ge a^2s^{-2(1-p)/p}.
	\]
	If \((2-p)-(1-p)s\le0\), the desired inequality is immediate.
	Otherwise,
	\[
	\frac{(2-p)a^2-(1-p)b^2}{m^2}
	\le
	\bigl((2-p)-(1-p)s\bigr)s^{2(1-p)/p}
	=:g(s).
	\]
	A direct computation yields
	\[
	g'(s)
	=
	\frac{(1-p)(2-p)}{p}
	s^{2(1-p)/p-1}(2-s).
	\]
	Hence \(g\) attains its maximum at \(s=2\), and
$
	g(2)=p\,2^{2/p-2}.
$
	This proves the asserted inequality \eqref{eq:expectation-scalar}. Finally, equality is attained
	when \(r\) is the indicator of an event of probability \(1/2\);
	therefore, the constant \(p\,2^{2/p-2}\) is optimal.
\end{proof}

\begin{proof}[Proof of Theorem~\ref{thm:commutative-positive-cone-tangent}]
	Since \(x\pm y\ge0\) almost everywhere, we have
	\(x\ge |y|\ge0\) almost everywhere on \(\Omega\).
	In particular, \(y=0\) on \(\{x=0\}\). If \(x=0\), then \(y=0\),
	and the assertion is immediate. Assume henceforth that \(x\neq0\).
	
	For \(-1\le t\le1\), set
$
	z_t=x+ty,
$ and $
	F(t)=\|z_t\|_p^2
	=\left(\int_\Omega z_t^p\,d\mu\right)^{2/p}.
$
	For \(|t|<1\),
$
	z_t\ge (1-|t|)x.
$
	Let \(I\subset(-1,1)\) be compact and put
$
	\delta=1-\sup_{t\in I}|t|>0.
$
	Then \(z_t\ge\delta x\) for \(t\in I\). Since \(|y|\le x\), we have
	on \(\{x>0\}\)
	\[
	z_t^{p-1}|y|
	\le
	\delta^{p-1}x^p,
	\qquad
	z_t^{p-2}y^2
	\le
	\delta^{p-2}x^p.
	\]
	These bounds justify differentiation under the integral sign and
	show that \(F\in C^2(-1,1)\).
	
	Write
	\[
	A_t=\int_\Omega z_t^p\,d\mu,\qquad
	B_t=\int_{\{x>0\}}z_t^{p-1}y\,d\mu,\qquad
	C_t=\int_{\{x>0\}}z_t^{p-2}y^2\,d\mu.
	\]
	A direct differentiation gives
	\[
	\frac12F''(t)
	=
	(2-p)A_t^{2/p-2}B_t^2
	+
	(p-1)A_t^{2/p-1}C_t.
	\]
	Define a probability measure \(\nu_t\) by
$
	d\nu_t=\frac{z_t^p}{A_t}\,d\mu
$
	and set
$
	r_t=
	\begin{cases}
		y/z_t,&x>0,\\
		0,&x=0.
	\end{cases}
$
	Then
	\[
	B_t=A_t\mathbb E_{\nu_t}r_t,
	\qquad
	C_t=A_t\mathbb E_{\nu_t}r_t^2,
	\]
	and hence
	\[
	\frac12F''(t)
	=
	A_t^{2/p}
	\left[
	(2-p)(\mathbb E_{\nu_t}r_t)^2
	-
	(1-p)\mathbb E_{\nu_t}r_t^2
	\right].
	\]
	By Lemma~\ref{lem:moment-p-below-one},
	\[
	\frac12F''(t)
	\le
	p\,2^{2/p-2}
	A_t^{2/p}
	\bigl(\mathbb E_{\nu_t}|r_t|^p\bigr)^{2/p}.
	\]
	Moreover,
	\[
	A_t^{2/p}
	\bigl(\mathbb E_{\nu_t}|r_t|^p\bigr)^{2/p}
	=
	\left(\int_\Omega |y|^p\,d\mu\right)^{2/p}
	=
	\|y\|_p^2.
	\]
	Therefore,
	\[
	F''(t)
	\le
	2p\,2^{2/p-2}\|y\|_p^2,
	\qquad |t|<1.
	\]
	It follows that
$
	G(t)
	:=
	F(t)-p\,2^{2/p-2}t^2\|y\|_p^2
$
	is concave on \((-1,1)\). Since \(0\le z_t\le2x\) for
	\(-1\le t\le1\), dominated convergence shows that \(F\), and hence
	\(G\), is continuous on \([-1,1]\). The supporting-line inequality
	for \(G\) at \(0\) therefore gives
	\[
	G(1)\le G(0)+G'(0).
	\]
	By the definition of the positive-cone tangent form,
	\[
	F'(0)=2\pair{J_{L_p,+}(x)}{y}.
	\]
	Substituting this identity into the preceding inequality yields
	\[
	\|x+y\|_p^2
	\le
	\|x\|_p^2
	+2\pair{J_{L_p,+}(x)}{y}
	+p\,2^{2/p-2}\|y\|_p^2.
	\]
	
	It remains to prove optimality. Let
	\(\Omega=\{1,2\}\) with
	\(\mu(\{1\})=\mu(\{2\})=1/2\), and set
	\[
	x=(1,1),\qquad y_\varepsilon=(0,\varepsilon),
	\qquad 0<\varepsilon<1.
	\]
	Then \(x\pm y_\varepsilon\ge0\),
	\[
	\|x\|_p^2=1,
	\qquad
	2\pair{J_{L_p,+}(x)}{y_\varepsilon}=\varepsilon,
	\qquad
	\|y_\varepsilon\|_p^2
	=2^{-2/p}\varepsilon^2.
	\]
	Furthermore,
	\[
		\begin{aligned}
			\|x+y_\varepsilon\|_p^2
			&=
			\left(
			\frac{1+(1+\varepsilon)^p}{2}
			\right)^{2/p}\\
			&=1+\varepsilon+\frac p4\varepsilon^2
			+o(\varepsilon^2).
		\end{aligned}
	\]
	If the coefficient \(p\,2^{2/p-2}\) were replaced by \(C\), then
	the preceding expansion would imply
	\[
	\frac p4\le C\,2^{-2/p}.
	\]
	Thus \(C\ge p\,2^{2/p-2}\), proving that the stated coefficient is
	optimal. Replacing \(\varepsilon\) by \(-\varepsilon\) in the same
	expansion and adding gives
	\[
	\|x+y_\varepsilon\|_p^2+
	\|x-y_\varepsilon\|_p^2
	=2+\frac p2\varepsilon^2+o(\varepsilon^2).
	\]
	Since \(\|y_\varepsilon\|_p^2=2^{-2/p}\varepsilon^2\), this also
	proves the optimality of the coefficient \(p\,2^{2/p-1}\).
\end{proof}

\subsection{Transfer to Schatten \texorpdfstring{$p$}{p}-classes}

The transfer rests on the two-dimensional $L_p$ representation furnished by
tracial joint spectral measures.

\begin{prop}\label{prop:positive-schatten-plane}
	Let $0<p<\infty$ and let 
	$C,D\in S_p$ be positive. Then there exist a measure space
	$(\Omega,\mu)$ and nonnegative functions $c,d\in L_p(\Omega;\mathbb R)$
	such that
	\begin{equation}\label{eq:positive-schatten-plane}
		\|\alpha C+\beta D\|_{S_p}
		=\|\alpha c+\beta d\|_{L_p(\mu)},
		\qquad \alpha,\beta\in\mathbb R.
	\end{equation}
\end{prop}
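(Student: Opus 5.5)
The plan is to reduce \eqref{eq:positive-schatten-plane} to a representation of the two-variable function
\[
N(\alpha,\beta):=\Tr|\alpha C+\beta D|^p=\|\alpha C+\beta D\|_{S_p}^p
\]
as a \(p\)-cosine transform of a positive measure. This function is even and positively homogeneous of degree \(p\). Concretely, I will look for a finite positive measure \(\nu\) on \([0,\pi/2]\) with
\[
N(\alpha,\beta)=\int_{[0,\pi/2]}|\alpha\cos\phi+\beta\sin\phi|^p\,d\nu(\phi),\qquad \alpha,\beta\in\mathbb R.
\]
Taking \(\Omega=[0,\pi/2]\), \(\mu=\nu\), \(c=\cos\phi\) and \(d=\sin\phi\) then gives \eqref{eq:positive-schatten-plane} with \(c,d\ge0\), and \(c,d\in L_p(\mu)\) since \(\nu\) is finite.

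\textbf{Step 1 (reduction to finite rank).} Compress \(C,D\) by finite-rank spectral projections \(P_n\uparrow\) the joint support. This gives \(P_nCP_n\to C\) and \(P_nDP_n\to D\) in \(S_p\). The Schatten quasi-norm is continuous, so the corresponding functions \(N_n\) converge to \(N\) uniformly on the unit circle. The total masses \(\nu_n([0,\pi/2])\) are controlled by \(N_n(1,0)+N_n(0,1)\), because \(|\cos\phi|^p+|\sin\phi|^p\ge1\). A weak-\(*\) limit of the \(\nu_n\) on the compact interval \([0,\pi/2]\) then represents \(N\), since the integrands \(|\alpha\cos\phi+\beta\sin\phi|^p\) are continuous in \(\phi\). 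So it suffices to treat positive matrices.

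\textbf{Step 2 (structure of the pair).} Put \(H=C+D\) and restrict to its support, so that \(H\) is invertible there. Write
\[
C=H^{1/2}KH^{1/2},\qquad D=H^{1/2}(1-K)H^{1/2},\qquad 0\le K\le 1 .
\]
Hence \(\alpha C+\beta D=H^{1/2}\bigl(\alpha K+\beta(1-K)\bigr)H^{1/2}\). The spectral measure of \(K\), weighted through \(H\), is the natural candidate for the ``tracial joint spectral measure'': heuristically, \(\phi\) should parametrize the ratio \(k/(1-k)=\tan\phi\) of the eigenvalues of \(K\). If \(H\) and \(K\) commute, this works exactly. With \(H=\sum_j h_je_j\) and \(K=\sum_j k_je_j\), one takes \(\nu=\sum_j h_j^p\,(k_j^2+(1-k_j)^2)^{p/2}\,\delta_{\phi_j}\), where \(\tan\phi_j=k_j/(1-k_j)\). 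This settles the commuting case.

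\textbf{Step 3 (main obstacle: noncommuting \(C,D\)).} In general \(H\) and \(K\) do not commute, and \(\Tr|\cdot|^p\) is not a spectral functional of the pair. Here I would use a two-dimensional embedding principle instead of an explicit construction. Since \(N\) is even and \(p\)-homogeneous on \(\mathbb R^2\), it suffices to show that \(\theta\mapsto N(\cos\theta,\sin\theta)\) is a \(p\)-cosine transform of a positive measure. In dimension two, this is equivalent to the positivity of a suitable fractional derivative of this function in \(\theta\), in the spirit of Koldobsky's characterization of two-dimensional subspaces of \(L_p\).

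I would first try to verify this positivity directly from trace perturbation formulas, using the divided-difference identity \(D\psi(A)[B]=\psi^{[1]}(L_A,R_A)B\) already exploited in Proposition~\ref{prop:derivative}. A key simplification is positivity: if \(\theta\in[0,\pi/2]\), then \(\cos\theta\, C+\sin\theta\, D\ge0\). So only sign changes across the two boundary rays must be handled, which is exactly why \(\nu\) can be taken supported in \([0,\pi/2]\). If the positivity check proves intractable, the fallback is an interpolation/majorization argument for the eigenvalues of \(\cos\theta\, C+\sin\theta\, D\) via Lidskii-type inequalities. I expect Step~3 to be the genuinely hard step; Steps~1 and~2 are routine.
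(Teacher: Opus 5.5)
Your Step~1 matches the paper's approximation argument. The paper pushes the finite-dimensional measures onto the segment $K=\{(u,v)\in\mathbb R_+^2:u+v=1\}$ with weight $(s+t)^p$; your quarter circle is an equivalent normalization. Step~2 is just the commuting case. The whole content of the proposition is the noncommuting matrix case, and Step~3 does not prove it; it only lists strategies to try.

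The missing ingredient is this: for an arbitrary pair of Hermitian matrices there is a positive Borel measure $\mu$ on $\mathbb R^2$ with $\|\alpha C+\beta D\|_{S_p}^p=p(p+1)\int_{\mathbb R^2}|\alpha s+\beta t|^p\,d\mu(s,t)$. The paper does not derive this. It imports it from Hein\"avaara's theory of tracial joint spectral measures (Theorem~1.3 of [Hei25]). It then uses the positive-coordinate support lemma (Lemma~1.5 of [Hei25], applied in each coordinate) to get $\supp\mu\subset\mathbb R_+^2$ when $C,D\ge0$. Without that theorem, or an independent proof of it, your argument establishes the proposition only for commuting pairs.

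The routes you sketch would not close this gap.
\begin{itemize}
\item A Koldobsky-type criterion merely restates the problem. For non-even $p$, positivity of the relevant fractional derivative of $\theta\mapsto\Tr|\cos\theta\,C+\sin\theta\,D|^p$ is equivalent to the existence of a positive $\nu$. The divided-difference formulas of Proposition~\ref{prop:derivative} only give pointwise second-order information, not positivity of a nonlocal transform.
\item For even $p$ the criterion degenerates and you face a truncated moment problem. Already for $p=4$ you would need positive semidefiniteness of the Hankel matrix built from $\Tr C^4$, $\Tr C^3D$, $\tfrac16\bigl(4\Tr C^2D^2+2\Tr (CD)^2\bigr)$, $\Tr CD^3$, $\Tr D^4$. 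That is a nontrivial trace inequality.
\item Lidskii-type majorization yields inequalities, never the exact identity required.
\item Your justification of the support condition is invalid. Positivity of $\cos\theta\,C+\sin\theta\,D$ for $\theta\in[0,\pi/2]$ makes $N$ real-analytic on the open arc, but this does not localize $\nu$. For example, the uniform measure on the circle has constant transform and full support. Since $c,d\ge0$ is exactly what Theorem~\ref{thm:positive-cone-tangent} needs, quadrant support is a separate fact that must be proved.
\end{itemize}

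Two minor repairs are also needed. In Step~1, $|\cos\phi|^p+|\sin\phi|^p\ge1$ fails for $p>2$. Bound the mass by $N_n(1,1)$ instead, using $\cos\phi+\sin\phi\ge1$ on $[0,\pi/2]$; this is the paper's normalization $\nu_m(K)=\|C_m+D_m\|_{S_p}^p$. In Step~2, $\tan\phi_j=k_j/(1-k_j)$ swaps the roles of $\alpha$ and $\beta$, so take $\tan\phi_j=(1-k_j)/k_j$.
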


\begin{proof}
	Assume first that $C$ and $D$ are matrices. By Hein\"avaara's tracial
	joint spectral-measure representation~\cite[Theorem~1.3]{Hei25}, there
	is a positive Borel measure $\mu$ on $\mathbb R^2$ such that
	\[
	\|\alpha C+\beta D\|_{S_p}^p
	=p(p+1)\int_{\mathbb R^2}|\alpha s+\beta t|^p\,d\mu(s,t).
	\]
By the positive-coordinate support property
\cite[Lemma~1.5]{Hei25}, the positivity of \(C\) implies that \(\mu\) is
supported in \(\{(s,t):s\ge0\}\). The proof of the same lemma, with the
two coordinates interchanged, shows that \(D\ge0\) forces support in
\(\{(s,t):t\ge0\}\). Hence
\(\supp\mu\subset\mathbb R_+^2\). Absorbing the factor \(p(p+1)\) into
the measure gives
	\eqref{eq:positive-schatten-plane} in finite dimension.
	
	For general positive compact operators, choose positive finite-rank
	$C_m,D_m$ converging to $C,D$ in $S_p$. View each pair in the
	finite-dimensional matrix algebra supported on the sum of its ranges,
	and let $\mu_m$ be the corresponding measures on $\mathbb R_+^2$. Set
	\[
	K=\{(u,v)\in\mathbb R_+^2:u+v=1\}.
	\]
	Define a finite measure $\nu_m$ on $K$ by
	\[
	\int_K f\,d\nu_m
	=\int_{\mathbb R_+^2\setminus\{0\}}
	(s+t)^p
	f\left(\frac{s}{s+t},\frac{t}{s+t}\right)d\mu_m(s,t).
	\]
	Since \(K\) is compact metrizable, \(C(K)\) is separable. Moreover,
	\(\nu_m(K)=\|C_m+D_m\|_{S_p}^p\) is uniformly bounded. Hence, after
	passing to a subsequence,
	\(\nu_m\overset{*}{\rightharpoonup}\nu\) for a finite measure \(\nu\)
	on \(K\). For every \(\alpha,\beta\in\mathbb R\),
	\[
	\begin{aligned}
		\|\alpha C+\beta D\|_{S_p}^p
		&=\lim_{m\to\infty}\|\alpha C_m+\beta D_m\|_{S_p}^p\\
		&=\lim_{m\to\infty}\int_K|\alpha u+\beta v|^p\,d\nu_m(u,v)\\
		&=\int_K|\alpha u+\beta v|^p\,d\nu(u,v).
	\end{aligned}
	\]
	Here the first limit follows from norm continuity when \(p\ge1\).
	When \(0<p<1\), it follows from
	\[
	\bigl|\|X_m\|_{S_p}^p-\|X\|_{S_p}^p\bigr|
	\le \|X_m-X\|_{S_p}^p,
	\]
	which is a consequence of the \(p\)-triangle inequality. Since
	$0\le u,v\le1$ on $K$ and $\nu(K)<\infty$, the functions
	$c(u,v)=u$ and $d(u,v)=v$ belong to $L_p(K,\nu)$. Taking
	$(\Omega,\mu)=(K,\nu)$ completes the proof.
\end{proof}

Next, we prove Theorem~\ref{thm:positive-cone-tangent}.

\begin{proof}[Proof of Theorem~\ref{thm:positive-cone-tangent}]
	Let $A,B\in S_p$ be self-adjoint and assume $A\pm B\ge0$. Put
$
	C=A+B$ and $ D=A-B.
$
	Apply Proposition~\ref{prop:positive-schatten-plane} to $C,D$, and set
$
	x=(c+d)/2$ and $y=(c-d)/2.
$
	Then $x\pm y\ge0$, and
	\begin{equation}\label{eq:same-line-function}
		\|A+tB\|_{S_p}=\|x+ty\|_{L_p(\mu)},
		\qquad t\in\mathbb R.
	\end{equation}
	
	Choose $x,y$ as in \eqref{eq:same-line-function}. The same identity at
	$t=0,\pm1$ gives
	\[
	\|A\|_{S_p}=\|x\|_{L_p},
	\qquad
	\|A\pm B\|_{S_p}=\|x\pm y\|_{L_p},
	\]
	while Proposition~\ref{prop:positive-schatten-plane} gives
	$\|B\|_{S_p}=\|y\|_{L_p}$. Differentiating the squared form of
	\eqref{eq:same-line-function} at $t=0$ identifies the two cone tangent
	terms. Theorem~\ref{thm:commutative-positive-cone-tangent} now 
	gives Theorem~\ref{thm:positive-cone-tangent}.

	For the optimality statements, it suffices to consider diagonal
	\(2\times2\) matrices. Let
	\[
	A=\begin{pmatrix}1&0\\0&1\end{pmatrix},
	\qquad
	B_\varepsilon=\begin{pmatrix}0&0\\0&\varepsilon\end{pmatrix},
	\qquad 0<\varepsilon<1.
	\]
	Then $A\pm B_\varepsilon\ge0$,
	$\|B_\varepsilon\|_{S_p}^2=\varepsilon^2$, and
	\begin{equation}\label{eq:positive-cone-sharp-expansion}
		\begin{aligned}
			\|A+B_\varepsilon\|_{S_p}^2
			&=\bigl(1+(1+\varepsilon)^p\bigr)^{2/p}=2^{2/p}+2^{2/p}\varepsilon
			+p2^{2/p-2}\varepsilon^2+o(\varepsilon^2).
		\end{aligned}
	\end{equation}
	On the other hand,
	\[
	\|A\|_{S_p}^2=2^{2/p},
	\qquad
	2\pair{J_{S_p,+}(A)}{B_\varepsilon}=2^{2/p}\varepsilon.
	\]
	Thus the quotient of the second-order remainder by
	$\|B_\varepsilon\|_{S_p}^2$ tends to
	$p\,2^{2/p-2}$, proving sharpness in
	Theorem~\ref{thm:positive-cone-tangent}. 
\end{proof}
Replacing \(\varepsilon\) by
\(-\varepsilon\) in \eqref{eq:positive-cone-sharp-expansion} and adding
gives
\[
\|A+B_\varepsilon\|_{S_p}^2+
\|A-B_\varepsilon\|_{S_p}^2
=2^{2/p+1}+p\,2^{2/p-1}\varepsilon^2
+o(\varepsilon^2).
\]
Since \(\|B_\varepsilon\|_{S_p}^2=\varepsilon^2\), this proves the
optimality asserted in Corollary~\ref{cor:positive-cone-midpoint}.

%\section*{Declaration of competing interest}
%The authors declare no competing interests.
%
%\section*{Data availability}
%No data were used for the research described in this article.

\end{document}